\title{Measure equivalence rigidity of the handlebody groups}
\author{Sebastian Hensel and Camille Horbez}
\begin{document}
\maketitle
\newtheorem{de}{Definition} [section]
\newtheorem{theo}[de]{Theorem} 
\newtheorem{prop}[de]{Proposition}
\newtheorem{lemma}[de]{Lemma}
\newtheorem{lem}[de]{Lemma}
\newtheorem{cor}[de]{Corollary}
\newtheorem{propd}[de]{Proposition-Definition}
\newtheorem{conj}[de]{Conjecture}
\newtheorem{claim}{Claim}
\newtheorem*{claim2}{Claim}

\newtheorem{theointro}{Theorem}
\newtheorem*{defintro}{Definition}
\newtheorem{corintro}[theointro]{Corollary}
\newtheorem{propintro}[theointro]{Proposition}

\theoremstyle{remark}
\newtheorem{rk}[de]{Remark}
\newtheorem{ex}[de]{Example}
\newtheorem{question}[de]{Question}

\normalsize

\newcommand{\cala}{\mathcal{A}}
\newcommand{\calc}{\mathcal{C}}
\newcommand{\calf}{\mathcal{F}}
\newcommand{\calg}{\mathcal{G}}
\newcommand{\calh}{\mathcal{H}}
\newcommand{\calp}{\mathcal{P}}
\newcommand{\calk}{\mathcal{K}}
\newcommand{\calb}{\mathcal{B}}
 
\newcommand{\pstab}{\mathrm{PStab}} 
\newcommand{\Mod}{\mathrm{Mod}}
\newcommand{\DA}{\mathbb{DA}}
\newcommand{\crs}{\mathrm{CRS}}
\newcommand{\Prob}{\mathrm{Prob}}
\newcommand{\PMF}{\mathrm{PMF}}
\newcommand{\Aut}{\mathrm{Aut}}
\newcommand{\Stab}{\mathrm{Stab}}
\newcommand{\Cay}{\mathrm{Cay}}
\newcommand{\nsep}{\mathrm{nsep}}
\newcommand{\sep}{\mathrm{sep}}
\newcommand{\spec}{\mathrm{spec}}
\newcommand{\actson}{\curvearrowright}
\newcommand{\uni}{\mathrm{uni}}
\newcommand{\dunion}{\sqcup}
\newcommand{\psep}{(\mathrm{P}_{\mathrm{sep}})}
\newcommand{\pnsep}{(\mathrm{P}_{\nsep})}
\newcommand{\qnsep}{(\mathrm{Q}_{\nsep})}

\newcommand{\ND}{\mathbb{D}^{\nsep}(V)}
\newcommand{\NDA}{\mathbb{DA}^{\uni}}

\newcommand{\PML}{\mathrm{PML}}
\newcommand{\NAL}{\mathrm{NAL}}
\newcommand{\AL}{\mathrm{AL}}
 
\newcommand{\Ccom}[1]{\Cmod\marginpar{\color{red}\tiny #1 --ch}} 
\newcommand{\Scom}[1]{\Smod\marginpar{\color{blue}\tiny #1 --sh}}
\newcommand{\Cmod}{$\textcolor{red}{\clubsuit}$} 
\newcommand{\Smod}{$\textcolor{blue}{\spadesuit}$}

\begin{abstract}
Let $V$ be a connected $3$-dimensional handlebody of finite genus at least $3$. We prove that the handlebody group $\Mod(V)$ is superrigid for measure equivalence, i.e.\ every countable group which is measure equivalent to $\Mod(V)$ is in fact virtually isomorphic to $\Mod(V)$. Applications include a rigidity theorem for lattice embeddings of $\Mod(V)$, an orbit equivalence rigidity theorem for free ergodic measure-preserving actions of $\Mod(V)$ on standard probability spaces, and a $W^*$-rigidity theorem among weakly compact group actions.
\end{abstract}

\section*{Introduction}
 
A central quest in measured group theory is to classify countable groups up to \emph{measure equivalence}, a notion coined by Gromov in \cite{Grom} as a measurable analogue to the geometric notion of quasi-isometry between finitely generated groups. 

The definition is as follows: two countable groups $\Gamma_1$ and $\Gamma_2$ are \emph{measure equivalent} if there exists a standard measure space $\Omega$ (of positive measure) equipped with an action of $\Gamma_1\times\Gamma_2$ by measure-preserving Borel automorphisms, such that for every $i\in\{1,2\}$, the action of $\Gamma_i$ on $\Omega$ is free and has a fundamental domain of finite measure. The typical example is that any two (possibly non-uniform) lattices in the same locally compact second countable group $G$ are always measure equivalent, by considering the left and right multiplications on $G$ equipped with its Haar measure.

Dye proved in \cite{Dye1,Dye2} that all countably infinite abelian groups are measure equivalent. This was famously generalized by Ornstein and Weiss to all countably infinite amenable groups \cite{OW}, and in fact these form a class of the measure equivalence relation on the set of all countably infinite groups, see \cite[Corollary~1.3]{Fur2}. At the other extreme of the picture, some groups satisfy very strong rigidity properties. A first striking example is the following: building on earlier work of Zimmer \cite{Zim1,Zim2}, Furman proved that every countable group which is measure equivalent to a lattice in a center-free higher rank simple Lie group, is commensurable to a lattice in the same Lie group up to a finite kernel \cite{Fur2}. In \cite{MS}, Monod and Shalom proved superrigidity type results for direct products of groups that satisfy an analytic form of negative curvature, phrased in terms of a bounded cohomology criterion. Later, Kida proved that, with the exception of some low-complexity cases, mapping class groups $\Mod(\Sigma)$ of orientable finite-type surfaces are \emph{ME-superrigid}, i.e.\ every countable group that is measure equivalent to $\Mod(\Sigma)$, is in fact commensurable to $\Mod(\Sigma)$ up to a finite kernel \cite{Kid}. This led to further strong rigidity results, for certain amalgamated free products \cite{Kid2}, certain subgroups of $\Mod(\Sigma)$ such as the Torelli group \cite{CK}, some infinite classes of Artin groups of hyperbolic type \cite{HoHu}. Very recently, Guirardel and the second-named author established that $\mathrm{Out}(F_N)$, the outer automorphism group of a finitely generated free group of rank $N\ge 3$, is also ME-superrigid \cite{GH}. 

In the present paper, we establish a superrigidity theorem for
\emph{handlebody groups}, defined as mapping class groups $\Mod(V)$ of
connected $3$-dimensional handlebodies $V$, i.e.\ $V$ is a disk-sum of
finitely many copies of $D^2\times S^1$. These groups are of
particular importance in $3$-dimensional topology, and most notably in
the theory of Heegaard splittings, see e.g.\ the discussion in
\cite[Section~4]{Hen-primer}. They are also important in geometric
group theory due to their direct connections to both mapping
class groups of surfaces and outer automorphism groups of free
groups. Notice indeed that $\partial V$ is a closed orientable surface
of finite genus $g\ge 0$, and $\Mod(V)$ embeds as a (highly distorted
\cite{HH1}) subgroup of $\Mod(\partial V)$; it also surjects onto
$\mathrm{Out}(F_g)$ via the action at the level of the fundamental
group (with non-finitely generated kernel \cite{McCul}). Recently, 
the geometry of handlebody groups has been shown to share many features with
outer automorphism groups of free groups rather than surface mapping
class groups (e.g. concerning the growth of isoperimetric functions \cite{HH}
or the subgroup geometry of stabilisers \cite{HenStab}).

Handlebody groups are known to satisfy some algebraic rigidity
properties. Let $\Mod^\pm(V)$ be the \emph{extended handlebody group}, where one allows orientation-reversing homeomorphisms. 
Korkmaz and Schleimer proved in \cite{KS} that the outer
automorphism group of $\Mod^\pm(V)$ is trivial, and the first-named author further
proved in \cite{Hen} that the natural map from $\Mod^\pm(V)$ to
its abstract commensurator is an isomorphism. To our knowledge, the
question of the quasi-isometric rigidity of handlebody groups (which
are finitely generated by work of Suzuki \cite{Suz}, in fact finitely
presented by work of Wajnryb \cite{Waj}) is still widely open. Our
main theorem establishes their superrigidity from the viewpoint of
measured group theory.

\begin{theointro}\label{theointro:main}
Let $V$ be a connected $3$-dimensional handlebody of finite genus at least $3$. Then $\Mod(V)$ is ME-superrigid. 
\end{theointro}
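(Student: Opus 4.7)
The plan is to follow the now-standard approach to ME-superrigidity pioneered by Kida in the surface case \cite{Kid} and extended by Guirardel and the second named author to $\mathrm{Out}(F_N)$ \cite{GH}. Given any self measure equivalence self-coupling $(\Omega,\mu)$ of $\Mod(V)$, the goal is to produce a measurable $\Mod(V)\times\Mod(V)$-equivariant map $\Omega\to\Mod(V)$, where the target carries its bi-action by left and right multiplication. Once such a map is constructed, the standard output theorem of measured group theory, combined with the commensurator rigidity of $\Mod(V)$ established in \cite{Hen}, will convert coupling data with an arbitrary countable group $\Lambda$ into a virtual isomorphism $\Lambda\simeq\Mod(V)$.

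To build the equivariant map, one identifies a rigid combinatorial complex on which $\Mod(V)$ acts. The natural candidate is the disk graph $\calc(V)$, whose vertices are isotopy classes of essential meridian disks in $V$ (equivalently, meridional curves on $\partial V$) with edges recording disjointness; its simplicial automorphism group is exactly $\Mod(V)$, by \cite{Hen} (building on \cite{KS}). Following \cite{Kid,GH}, the first step is to construct, using amenability of suitable stabilizers and the action on an appropriate boundary, an equivariant measurable map $\varphi\colon\Omega\to\PMF(\partial V)$. The second step is to show that $\varphi$ almost surely takes values in the subspace supported by meridional laminations, after which one promotes the map to take values in simplices of $\calc(V)$, and finally to $\Aut(\calc(V))=\Mod(V)$.

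The heart of the argument—and its main obstacle—is the second step: forcing $\varphi$ to land on meridional configurations. In Kida's surface case the analogous reduction hinges on the Nielsen--Thurston classification and on a careful analysis of stabilizers of topological types of measured foliations; in the free-group case \cite{GH} it uses Bestvina--Feighn--Handel theory for $\mathrm{Out}(F_N)$ together with a relative hyperbolicity input. For the handlebody group, neither machinery applies directly: $\Mod(V)$ embeds as a highly distorted subgroup of $\Mod(\partial V)$ \cite{HH1}, and most elements of $\PMF(\partial V)$ are not realized by the handlebody dynamics. A handlebody-tailored Adams-type dichotomy will therefore be needed, to rule out non-meridional laminations by showing that the intersection of their $\Mod(\partial V)$-stabilizers with $\Mod(V)$ is too small, or else too poorly behaved dynamically, to accommodate the full ME-coupling.

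Concretely, I expect the proof to combine three ingredients: (a) the hyperbolicity of the disk graph and the associated subsurface-projection-like machinery developed in the handlebody context; (b) a classification of stabilizers in $\Mod(V)$ of points in $\PMF(\partial V)$, distinguishing meridional from non-meridional accumulation sets; and (c) the general measure-equivalence framework of \cite{Kid,GH}, reformulated so that its hypotheses can be verified axiomatically for $\Mod(V)$ acting on $\calc(V)$. The technically most delicate piece will be (b), as it requires a dictionary between surface-theoretic tools (measured foliations on $\partial V$) and the free-group-style behaviour inherited from the projection $\Mod(V)\twoheadrightarrow\mathrm{Out}(F_g)$.
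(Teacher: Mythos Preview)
Your high-level architecture is right: reduce to a cocycle/groupoid rigidity statement and then identify $\Aut(\mathbb{D})$ with $\Mod(V)$ via Korkmaz--Schleimer. But the proposed engine for the ``heart of the argument'' is the wrong one, and this is a genuine gap rather than a detail.

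You propose to first map to $\PMF(\partial V)$ and then run an Adams-type dichotomy to force the image onto meridional laminations, arguing that stabilizers of non-meridional points are ``too small'' or ``dynamically poorly behaved'' inside $\Mod(V)$. This is not what obstructs the argument, and it is not how the paper proceeds. The difficulty is precisely that certain non-meridional configurations have stabilizers in $\Mod(V)$ that look \emph{exactly} like meridian stabilizers from the groupoid point of view: if $\{\alpha_1,\alpha_2\}$ is an annulus pair, the stabilizer contains the infinite cyclic normal subgroup generated by $T_{\alpha_1}T_{\alpha_2}^{-1}$, so the Kida-style characterization ``maximal nonamenable with an infinite amenable normal subgroup'' does not single out meridians. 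Worse, normalizers of partial pseudo-Anosovs supported on subsurfaces can also pass this test in $\Mod(V)$, because the multitwist group about the boundary may intersect $\Mod^0(V)$ trivially, so the usual ``pass to the larger curve stabilizer'' trick from the surface case fails. None of these phenomena are visible as a size/dynamics distinction in $\PMF(\partial V)$, and your proposed ingredient (b) will not separate them.

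What the paper actually does is purely at the groupoid level: it introduces an extra clause (non-containment in a \emph{product-like} subgroupoid) to kill the partial-pseudo-Anosov case and the separating-meridian case, reduces to pairs governed by an \emph{admissible decorated multicurve} $(X,\mathfrak{A})$ with amenable active subgroup, and then distinguishes nonseparating meridians from annulus pairs by a combinatorial link argument in a graph $\mathbb{M}$ of decorated multicurves (completing to $3g-3$ pairwise compatible objects and counting boundary curves). Separating meridians are recovered afterwards by a separate property $\psep$. If you want to salvage your outline, replace step (b) by this algebraic/combinatorial characterization; mapping through $\PMF(\partial V)$ and the surjection to $\mathrm{Out}(F_g)$ will not give you the needed discrimination.
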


\paragraph*{Consequences.} The techniques used in the proof of Theorem~\ref{theointro:main} have several other consequences. First, we recover (with a different argument) the commensurator rigidity statement established by the first named author in \cite{Hen}, see Remark~\ref{rk:commensurator}.

Second, using ideas of Furman \cite{Fur} and Kida \cite{Kid}, we derive that handlebody groups cannot embed as lattices in second countable locally compact groups in any interesting way. 

\begin{corintro}\label{corintro:lattice}
Let $V$ be a connected $3$-dimensional handlebody of finite genus at least $3$. Let $G$ be a locally compact second countable group, equipped with its Haar measure. Let $\Gamma$ be a finite index subgroup of $\Mod^\pm(V)$, and let $\sigma:\Gamma\to G$ be an injective homomorphism whose image is a lattice. 

Then there exists a homomorphism $\theta:G\to\Mod^\pm(V)$ with compact kernel such that for every $f\in\Gamma$, one has $\theta\circ\sigma(f)=f$.
\end{corintro}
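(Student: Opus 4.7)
I would follow the Furman--Kida strategy for converting an ME-superrigidity theorem into a lattice-embedding rigidity statement, as in \cite{Fur,Kid}. The key input is the coupling-theoretic strengthening of Theorem~\ref{theointro:main}: the proof should actually produce, for any ME self-coupling $\Omega$ of $\Gamma$ (which is a finite-index subgroup of $\Mod(V)$), an essentially unique measurable $\Gamma\times\Gamma$-equivariant map $\Phi:\Omega\to\mathrm{Comm}(\Mod(V))$, where $\mathrm{Comm}(\Mod(V))$ carries the two-sided multiplication action. By Hensel's commensurator rigidity \cite{Hen}, also reproved in the present paper (Remark~\ref{rk:commensurator}), $\mathrm{Comm}(\Mod(V))\cong\Mod(V)$, so $\Phi$ takes values in $\Mod(V)$.

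I would then apply this to the self-coupling $\Omega=G$, equipped with its Haar measure and the $\Gamma\times\Gamma$-action $(\gamma_1,\gamma_2)\cdot g=\sigma(\gamma_1)\,g\,\sigma(\gamma_2)^{-1}$. The assumption that $\sigma(\Gamma)$ is a lattice in $G$ ensures that this is a genuine ME self-coupling. The decisive additional feature is the continuous, measure-preserving left $G$-action on $\Omega$, which commutes with the right $\Gamma$-action. A standard Fubini-type argument then shows that for almost every $\omega\in\Omega$, the quantity $\theta(g):=\Phi(g\omega)\,\Phi(\omega)^{-1}$ is independent of $\omega$ and defines a measurable group homomorphism $\theta:G\to\Mod(V)$. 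Since $\Mod(V)$ is a countable discrete group, any measurable homomorphism from a locally compact second countable group into $\Mod(V)$ is automatically continuous, so $\ker(\theta)$ is a closed subgroup of $G$.

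The identity $\theta\circ\sigma=\mathrm{id}_\Gamma$ follows directly from the $\Gamma\times\Gamma$-equivariance of $\Phi$. To check that $\ker(\theta)$ is compact, I would use that it is a closed normal subgroup of $G$ whose intersection with $\sigma(\Gamma)$ is trivial: it therefore injects as a closed subset into the finite-volume homogeneous space $\sigma(\Gamma)\backslash G$, and a standard volume-versus-discreteness argument then forces it to be compact. I expect the main obstacle to lie in the first step, namely verifying that the proof of Theorem~\ref{theointro:main} does produce the sharper, \emph{coupling-rigid} output with values in $\mathrm{Comm}(\Mod(V))$ and full two-sided $\Gamma$-equivariance, rather than only the bare conclusion that ME-equivalent groups are virtually isomorphic to $\Mod(V)$. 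Once this coupling-rigidity form is in hand, the construction of $\theta$ and the compactness of its kernel are by now standard applications of the Furman--Kida machinery.
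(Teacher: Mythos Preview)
Your proposal is correct and follows essentially the same Furman--Kida strategy as the paper. The paper's proof simply observes that Theorem~\ref{theo:main-2} gives rigidity with respect to action-type cocycles (in the sense of \cite[Definition~4.1]{GH}) and that $\Mod(V)$ is ICC (Lemma~\ref{lemma:icc}), then invokes \cite[Theorem~4.7]{GH} as a black box encapsulating exactly the coupling/Fubini/compact-kernel argument you sketch.
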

If $S$ is a finite generating set of $\Mod^\pm(V)$, then $\Mod^\pm(V)$ naturally embeds as a lattice in the automorphism group of the Cayley graph $\Cay(\Mod^\pm(V),S)$, defined as the simplicial graph whose vertices are the elements of $\Mod^\pm(V)$, with an edge between two distinct vertices $g,h$ whenever $gh^{-1}\in S\cup S^{-1}$ (this convention excludes for instance loop-edges when $S$ contains the identity of $\Mod^\pm(V)$, or multiple edges if $S$ contains an element and its inverse). The above rigidity statement about lattice embeddings has the following consequence (which can also be viewed as a very weak form of the conjectural quasi-isometry rigidity statement).

\begin{corintro}
Let $V$ be a connected $3$-dimensional handlebody of finite genus at least $3$, and let $S$ be a finite generating set of $\Mod^\pm(V)$. Then every graph automorphism of $\Cay(\Mod^\pm(V),S)$ is at bounded distance from the left multiplication by an element of $\Mod^\pm(V)$. 

If $\Gamma$ is a torsion-free finite-index subgroup of $\Mod^\pm(V)$, and if $S'$ is a finite generating set of $\Gamma$, then the automorphism group of $\Cay(\Gamma,S')$ is countable, and in fact embeds as a subgroup of $\Mod^\pm(V)$ containing $\Gamma$.
\end{corintro}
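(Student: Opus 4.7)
Both statements are applications of Corollary~\ref{corintro:lattice} to the natural embedding of $\Mod(V)$, respectively of its torsion-free finite-index subgroup $\Gamma$, as a uniform lattice in the automorphism group of its Cayley graph. Let $H\in\{\Mod(V),\Gamma\}$ with associated finite generating set $T\in\{S,S'\}$, and equip $G=\Aut(\Cay(H,T))$ with the topology of pointwise convergence on the vertex set. Since $\Cay(H,T)$ is connected and locally finite, $G$ is a locally compact second countable group whose vertex stabilizers are compact-open, and left multiplication $\sigma:H\to G$ realizes $H$ as a discrete, vertex-transitive, hence uniform, lattice. Corollary~\ref{corintro:lattice} then produces a homomorphism $\theta:G\to\Mod(V)$ with compact kernel $K$ satisfying $\theta\circ\sigma(h)=h$ for every $h\in H$.

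For the first statement, fix the base vertex $v_0=e$ and an arbitrary $\phi\in G$, and set $\phi_0=\sigma(\theta(\phi))^{-1}\phi$, which lies in $K$. The evaluation map $K\to H$, $k\mapsto k(v_0)$, is continuous to the discrete set $H$, so the orbit $K\cdot v_0$ is finite; let $R$ be its diameter. Because $K$ is normal in $G$ and $\sigma(H)$ acts by isometries transitively on the vertex set, the $K$-orbit of an arbitrary vertex $v=\sigma(h)(v_0)$ equals $\sigma(h)(K\cdot v_0)$ and therefore also has diameter at most $R$. Hence $\phi_0$ displaces every vertex by at most $R$, which precisely says that $\phi$ and the left multiplication by $\theta(\phi)$ are within uniform distance $R$ of each other.

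For the second statement, take $H=\Gamma$ torsion-free. It suffices to show that $K$ is trivial: $\theta$ then becomes injective, identifying $G$ with a subgroup of $\Mod(V)$ that contains $\theta\sigma(\Gamma)=\Gamma$, and the countability of $G$ follows from that of $\Mod(V)$. The normality of $K$ together with the finiteness of its vertex orbits allow the $\Gamma$-action by left multiplication to descend to the orbit space $\Gamma/K$. The stabilizer in $\Gamma$ of the class $[e]\in\Gamma/K$ consists of those $\gamma$ with $[\gamma]=[e]$, i.e.\ with $\gamma\in K\cdot e$; viewed as a subset of the vertex set $\Gamma$, this stabilizer is exactly the finite orbit $K\cdot e$. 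Thus $K\cdot e$ is simultaneously a finite subset and a subgroup of the torsion-free group $\Gamma$, which forces $K\cdot e=\{e\}$. By the same normality argument, $K\cdot g=\sigma(g)(K\cdot e)=\{g\}$ for every $g\in\Gamma$, so each element of $K$ fixes every vertex, and therefore $K=\{1\}$ since a graph automorphism of $\Cay(\Gamma,S')$ is determined by its action on vertices.

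The conceptual content lies entirely in Corollary~\ref{corintro:lattice}, which we treat as a black box. Within the derivation itself, the only subtle step is the final triviality of the compact kernel $K$ under the torsion-freeness assumption; the orbit-stabilizer identification on $\Gamma/K$ that turns normality of $K$ and torsion-freeness of $\Gamma$ into triviality of $K$ is the main technical point of the plan.
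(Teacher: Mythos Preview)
Your proof is correct and follows essentially the same approach as the paper: embed $H$ as a uniform lattice in $\Aut(\Cay(H,T))$ and apply Corollary~\ref{corintro:lattice}. The paper defers the details to \cite[Corollary~4.8]{GH}, whereas you spell them out explicitly --- in particular the bounded-displacement argument via compactness and normality of $K$, and the orbit-stabilizer trick showing $K\cdot e$ is a finite subgroup of the torsion-free $\Gamma$ --- but the underlying strategy is the same.
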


The torsion-freeness assumption is crucial in the second part of the statement: for every finitely generated group $G$ containing a nontrivial torsion element, there exists a finite generating set $S$ of $G$ such that the automorphism group of $\Cay(G,S)$ is uncountable, as was observed by de la Salle and Tessera in \cite[Lemma~6.1]{dlST}.  

Thanks to work of Furman \cite{Fur3}, the measure equivalence rigidity statement given in Theorem~\ref{theointro:main} can also be recast in the language of orbit equivalence rigidity of probability measure-preserving ergodic group actions. We reach the following corollary, analogous to a theorem of Kida \cite{Kid2} for mapping class groups -- see Section~\ref{sec:oe} for all definitions.

\begin{corintro}\label{corintro:oe}
Let $V$ be a connected $3$-dimensional handlebody of finite genus at least $3$. Let $\Gamma$ be a countable group. Let $\Mod^\pm(V)\actson X$ and $\Gamma\actson Y$ be two free ergodic measure-preserving group actions by Borel automorphisms on standard probability spaces. 

If the actions $\Mod^\pm(V)\actson X$ and $\Gamma\actson Y$ are stably orbit equivalent, then they are virtually conjugate.
\end{corintro}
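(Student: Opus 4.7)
The plan is to follow the framework developed by Furman in \cite{Fur3} (and used by Kida in \cite{Kid2} in the surface mapping class group setting), in which orbit equivalence rigidity is extracted in a fairly formal way from measure equivalence rigidity. The first step is to convert the stable orbit equivalence between $\Mod(V)\actson X$ and $\Gamma\actson Y$ into an ME coupling $(\Omega,m)$ of $\Mod(V)$ and $\Gamma$, via Furman's standard construction: $\Omega$ is built from the orbit equivalence relations on $X$ and $Y$, and automatically comes equipped with the $\Mod(V)\times\Gamma$-action together with two distinguished measurable maps whose fibers are fundamental domains for the respective factor actions and which recover the actions on $X$ and $Y$.

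Next, I would apply Theorem~\ref{theointro:main} to $(\Omega,m)$ to obtain that $\Gamma$ is virtually isomorphic to $\Mod(V)$. The crucial point here is that in the Kida-style proof one expects for Theorem~\ref{theointro:main}, the ME superrigidity is actually established in a sharper cocycle-theoretic form: one produces an essentially unique measurable $\Mod(V)\times\Gamma$-equivariant map from $\Omega$ to the tautological self-coupling of $\Mod(V)$, with $\Gamma$ acting on the target through a homomorphism defined on a finite-index subgroup with finite kernel. This sharp conclusion says that $\Omega$ is ``almost tautological'' and not merely that the acting groups are commensurable.

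Finally, I would feed this back into Furman's dictionary. Combining the equivariant map produced in the previous step with the SOE structure carried by $\Omega$ from the first step yields measurable identifications between $X$ and $Y$, after restricting to finite-index subgroups and quotienting by the finite kernels of the associated homomorphisms, that intertwine the resulting quotient actions. This is exactly virtual isomorphism of the actions in the sense recalled in Section~\ref{sec:oe}.

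The main difficulty is not this corollary itself, which becomes a formal consequence of the Kida--Furman machinery once the sharp form of Theorem~\ref{theointro:main} is available. The substantive work lies upstream, in the main theorem: one must establish the cocycle-theoretic rigidity of self-couplings of $\Mod(V)$, which typically requires constructing $\Mod(V)$-equivariant measurable boundary maps into a combinatorial complex on which $\Mod(V)$ acts with enough rigidity (for handlebody groups, one expects the disk graph of $V$ or a suitable variant to play this role). Granting that input, the translation from ME rigidity to SOE rigidity here is essentially bookkeeping of measurable structures through Furman's correspondence between SOE couplings and stable orbit equivalences.
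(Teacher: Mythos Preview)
Your proposal is correct and follows essentially the same route as the paper: the paper proves this corollary (restated as Theorem~\ref{theo:oe}) by simply invoking the Furman--Kida machinery \cite{Kid-oe}, \cite[Lemma~4.18]{Fur-survey}, exactly as you outline. Your sketch is in fact more detailed than what the paper provides, and you correctly identify that the substantive input is the cocycle-theoretic form of the main theorem (Theorem~\ref{theo:main-2}), with the disk graph playing the role you anticipate.
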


Finally, our work also yields strong rigidity statements for von Neumann algebras associated (via a celebrated construction of Murray and von Neumann \cite{MvN}) to probability measure-preserving ergodic group actions of handlebody groups. By combining Corollary~\ref{corintro:oe} with the \emph{proper proximality} of handlebody groups in the sense of Boutonnet, Ioana and Peterson \cite{BIP} (established in \cite{HHL}), we reach the following corollary -- see Section~\ref{sec:oe} for definitions, and work of Ozawa and Popa \cite[Definition~3.1]{OP} for the notion of a weakly compact group action (as an important example, the action of a residually finite group on its profinite completion is weakly compact). 

\begin{corintro}\label{theointro:von-neumann}
Let $V$ be a connected $3$-dimensional handlebody of finite genus at least $3$. Let $\Gamma$ be a countable group. Let $\Mod^\pm(V)\actson X$ and $\Gamma\actson Y$ be two free ergodic measure-preserving group actions by Borel automorphisms on standard probability spaces, and assume that $\Gamma\actson Y$ is weakly compact. 

If the von Neumann algebras $L^\infty(X)\rtimes\Mod^\pm(V)$ and $L^\infty(Y)\rtimes\Gamma$ are isomorphic, then the actions $\Mod^\pm(V)\actson X$ and $\Gamma\actson Y$ are virtually conjugate.
\end{corintro}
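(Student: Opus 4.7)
The plan is to reduce the W*-equivalence statement to the orbit-equivalence-rigidity statement provided by Corollary~\ref{corintro:oe}. The bridge between the two is a unique-Cartan-subalgebra statement inside the common $II_1$ factor $M := L^\infty(X)\rtimes\Mod(V) \cong L^\infty(Y)\rtimes\Gamma$: if I can prove that the two Cartan subalgebras $A := L^\infty(X)$ and $B := L^\infty(Y)$ are unitarily conjugate in $M$, then, via the Feldman--Moore / Singer dictionary between Cartan pairs and countable measure-preserving equivalence relations, I get an orbit equivalence between $\Mod(V)\actson X$ and $\Gamma\actson Y$.

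To obtain this unique-Cartan statement, I would appeal to the proper-proximality framework of Boutonnet, Ioana and Peterson \cite{BIP}. By \cite{HHL}, the group $\Mod(V)$ is properly proximal, so the Cartan inclusion $A\subset M$ has its associated group (the quotient of its groupoid normalizer by unitaries) properly proximal; meanwhile, the Cartan inclusion $B\subset M$ is weakly compact by the hypothesis on $\Gamma\actson Y$. Combining this with Popa's intertwining-by-bimodules techniques and the asymptotic-bimodule machinery developed in \cite{BIP} (building on the earlier work of Ozawa and Popa \cite{OP}), one deduces that $A$ and $B$ must be unitarily conjugate in $M$, exactly as in the analogous arguments used by Kida for mapping class groups.

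From this point on, the argument is essentially formal. The orbit equivalence of $\Mod(V)\actson X$ and $\Gamma\actson Y$ obtained above then falls under the scope of Corollary~\ref{corintro:oe}, so the two actions are virtually isomorphic. A standard cocycle-untwisting argument, using the ambient W*-isomorphism together with the virtual isomorphism just obtained, finally upgrades this conclusion to the ``virtually conjugate'' statement that the corollary requires.

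The main obstacle, and where essentially all of the effort lies, is the unique-Cartan step. The existing unique-Cartan statements in \cite{OP} and \cite{BIP} are typically phrased in a symmetric way, with proper proximality and weak compactness both pertaining to the \emph{same} side of the W*-equivalence; here, proper proximality is known for $\Mod(V)$ while weak compactness is assumed only of the $\Gamma$-action, so one has to locate, or adapt, the version of the theorem handling this mixed setup. Once that input is in place, the passage to virtual conjugacy through Corollary~\ref{corintro:oe} and the work of Furman \cite{Fur3} is routine.
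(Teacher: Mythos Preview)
Your approach is essentially identical to the paper's: proper proximality of $\Mod(V)$ from \cite{HHL}, combined with \cite[Theorem~1.4]{BIP}, forces the weakly compact Cartan $L^\infty(Y)$ to be unitarily conjugate to $L^\infty(X)$; Singer's theorem then yields orbit equivalence, and the OE-superrigidity statement finishes. Two of your concerns are non-issues, however. First, the ``mixed setup'' you worry about is exactly what \cite[Theorem~1.4]{BIP} is designed for: its conclusion is that for a properly proximal group $\Mod(V)$, \emph{any} weakly compact Cartan subalgebra of $L^\infty(X)\rtimes\Mod(V)$ is unitarily conjugate to $L^\infty(X)$, with no weak-compactness hypothesis on the $\Mod(V)$-side; so no adaptation is required. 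Second, no final ``upgrade'' from virtually isomorphic to virtually conjugate is needed: the OE-superrigidity theorem in the body of the paper (Theorem~\ref{theo:oe}) already concludes virtual \emph{conjugacy}, and the phrasing in Corollary~\ref{corintro:oe} is just a slightly informal rendering of the same statement.
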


\paragraph*{Proof strategy.} The general strategy of our proof of Theorem~\ref{theointro:main} follows Kida's approach for mapping class groups \cite{Kid}. General techniques from measured group theory, originating in the work of Furman \cite{Fur2}, reduce the proof of Theorem~\ref{theointro:main} to a cocycle rigidity theorem (Theorem~\ref{theo:main-2}) for actions of $\Mod(V)$ on standard probability spaces. In order to avoid some finite-order phenomena, it is in fact useful for us to work in a finite-index rotationless subgroup $\Mod^1(V)$ (see Section~\ref{sec:rotationless} for its precise definition). More precisely, we are given a measured groupoid $\mathcal{G}$, which comes from restricting two actions of $\Mod^1(V)$ on standard finite measure spaces to a positive measure Borel subset $Y$ on which their orbits coincide. The groupoid $\calg$ is thus equipped with two cocycles $\rho_1,\rho_2:\calg\to\Mod^1(V)$, given by the two actions: whenever two points $x,y\in Y$ are joined by an arrow $g\in\calg$, there is an element $\rho_1(g)$ sending $x$ to $y$ for the first action, and an element $\rho_2(g)$ sending $x$ to $y$ for the second action. Our goal is to build a canonical map $\varphi:Y\to\Mod^\pm(V)$ such that $\rho_1$ and $\rho_2$ are cohomologous through $\varphi$: this means that whenever $x,y\in Y$ are joined by an arrow $g\in\calg$, then $\rho_2(g)=\varphi(y)\rho_1(g)\varphi(x)^{-1}$. In fact, using a theorem of Korkmaz and Schleimer which identifies $\Mod^\pm(V)$ to the automorphism group of the disk graph $\mathbb{D}$ of $V$, our goal is to build a (canonical) map $Y\to\Aut(\mathbb{D})$. Recall that the \emph{disk graph} is the graph whose vertices are the isotopy classes of \emph{meridians} in $\partial V$ (i.e.\ essential simple closed curves that bound a properly embedded disk in $V$), and two vertices are joined by an edge if the corresponding meridians have disjoint representatives in their respective isotopy classes.

In order to build the desired map $Y\to\Aut(\mathbb{D})$, the main step is to characterize subgroupoids of $\calg$ that arise as stabilizers of Borel maps $Y\to\mathbb{D}$ in a purely groupoid-theoretic way, i.e.\ with no reference to the cocycles (so that a vertex stabilizer for $\rho_1$ is also a vertex stabilizer for $\rho_2$). 

In the surface mapping class group setting (where the disk graph is replaced by the curve graph of the surface $\Sigma$), the important observation made by Kida is the following: curve stabilizers inside (a suitable finite index subgroup of) $\Mod(\Sigma)$ are characterized as maximal nonamenable subgroups which contain an infinite amenable normal subgroup (namely, the cyclic subgroup generated by the twist about the curve). This has a groupoid-theoretic analogue, through notions of amenable and normal subgroupoids.

The situation is more complicated for handlebodies, and the above algebraic statement does not give a characterization of meridian stabilizers any longer, for several reasons that we will now explain; for simplicity we will sketch the group-theoretic version of our arguments, but in reality everything has to be phrased in the language of measured groupoids. Our most challenging task, which occupies a large part of Section~\ref{sec:me}, is in fact to characterize stabilizers of nonseparating meridians. Inspired by the surface setting, we want to start with a maximal nonamenable subgroup $H$ of $\Mod^1(V)$ which contains an infinite amenable normal subgroup $A$. A first bad situation we encounter is the following: $A$ could be generated by a partial pseudo-Anosov, supported on a subsurface $S\subseteq\partial V$, and $H$ be its normalizer. In the surface setting considered by Kida \cite{Kid}, such an $H$ is not maximal, as it is contained in the stabilizer $H'$ of the boundary multicurve $\gamma$ of $S$. But for us, the group of multitwists about $\gamma$ could intersect $\Mod^1(V)$ trivially; in this case $H'$ may not contain any infinite normal amenable subgroup, so $H'$ may not violate the maximality of $H$. We resolve this first difficulty by further imposing that $H$ should not be contained in a subgroup containing two normal nonamenable subgroups that centralize each other (typically, the stabilizers of a subsurface and its complement); this is why we need to exclude separating meridians from our analysis at first. With a bit more work, we manage to reduce to the case where the pair $(H,A)$ is given by the following situation: there is a multicurve $X$, together with a (possibly empty) collection $\mathfrak{A}$ of complementary components of $X$ labeled active, $H$ is the stabilizer of $X$, and $A$ is exactly the active subgroup of $(X,\mathfrak{A})$, i.e.\ the subgroup of the stabilizer of $X$ acting trivially on all inactive subsurfaces, and it is amenable. This still includes several possibilities: $X$ could be a nonseparating meridian and $\mathfrak{A}=\emptyset$ (in which case $A$ is the twist subgroup). But (still with $\mathfrak{A}=\emptyset$), the multicurve $X$ could also be of the form $\alpha_1\cup\alpha_2$, where $\alpha_1$ and $\alpha_2$ together bound an annulus in $V$ (see Figure~\ref{fig:meridian}): the cyclic subgroup generated by the product of twists $T_{\alpha_1}T_{\alpha_2}^{-1}$ is then normal in the handlebody group stabilizer of the annulus. To exclude annuli (and in fact only retain nonseparating meridians), we use a combinatorial argument: roughly, we can always complete a nonseparating meridian to a collection of $3g-3$ such, while doing this with annulus pairs will introduce redundancy, as the same curves will be used more than once. Combinatorially, in a collection of $3g-3$ annuli, it is always possible to remove one without changing the link of the collection in an appropriate graph of disks and annuli. 

Once we have characterized nonseparating meridians, we actually have enough information to also recover the separating ones, exploiting that these can be completed to a pair of pants decomposition by adding $3g-4$ nonseparating meridians. Finally, a characterization of adjacency in the disk graph comes from observing that two meridians are disjoint up to isotopy if the corresponding twists commute, or in other words if these twists together generate an amenable subgroup of $\Mod(V)$.

\paragraph*{Acknowledgments.} We would like to thank an anonymous referee for a very careful reading and numerous comments which improved the paper. The first-named author is partially supported by the DFG as part of the SPP 2026 ``Geometry at Infinity''. The second-named author acknowledges support from the Agence Nationale de la Recherche under Grant ANR-16-CE40-0006 DAGGER.

\section{Handlebody and mapping class group facts}

In this section, we collect a few facts about handlebody groups that
will be useful in the paper. The reader is refered to \cite{FM} for an
introduction to mapping class groups and \cite{Johannson, Hen-primer}
for general information about handlebody groups.

\subsection{Mapping Class Group background}\label{sec:bg}
Let $\Sigma$ be a surface obtained from an oriented compact surface by
removing a finite number of points and open disks (a \emph{finite type
  surface}).  A \emph{essential simple closed curve (or simply curve)}
is an embedded copy of $S^1$ in $\Sigma$ which is not homotopic to a
point, a puncture, or a boundary component. Usually we do not
distinguish between isotopic curves. 

The \emph{extended mapping class group} $\Mod^\pm(\Sigma)$ is the
group of isotopy classes of homeomorphisms of $\Sigma$, and the \emph{mapping class group} $\Mod(\Sigma)$ is the subgroup formed by the orientation preserving mapping classes. We refer the
reader to e.g. \cite{FM} for basic facts on curves, their minimal
position, subsurfaces and basic mapping class facts. In this section
we only recall a few results which are particularly pertinent for our
purposes.

\paragraph*{Rotationless Mapping Classes.}
In order to avoid finite-order phenomena, it will be
useful to work in certain finite index subgroups of the mapping class
group. We say that a mapping class $f$ is \emph{rotationless} (or
\emph{pure}) if the following holds: if a power of $f$ fixes the
isotopy class of a simple closed curve $c$, then $f$ actually fixes
the oriented isotopy class of $c$ (in particular $f$ does not swap the two sides of $c$, e.g.\ if $c$ is separating, then $f$ preserves both complementary components).

We denote by $\Mod^0(\Sigma)$ the (finite index) subgroup of
$\Mod(\Sigma)$ consisting of mapping classes which preserve each
complementary component, and act trivially on homology mod $3$ of the
surface. We have
\begin{lemma}\label{lem:ivanov}
  \begin{itemize}
  \item Every $f\in\Mod^0(\Sigma)$ is rotationless.
  \item Every subgroup $G < \Mod^0(\Sigma)$ either contains a free
    group on two generators, or is free abelian.
  \end{itemize}
\end{lemma}
\begin{proof}
  The first claim is \cite[Theorem~1.2]{Iva-book} (noting that
  $H_1(S)$ in that source denotes homology with mod-$k$ coefficients
  with $k\geq 3$). The second claim is \cite[Theorem~8.9]{Iva-book}
  (noting that $\Gamma_S(m_0)$ is the group acting trivially on
  homology with mod-$m_0$ coefficients).
\end{proof}

\paragraph*{Subsurfaces.}

We emphasise that a \emph{subsurface $X \subset \Sigma$} is an
embedded copy of a finite type surface in $\Sigma$, so that every
boundary curve of $X$ is essential. Furthermore, we require that if
$A \subset X$ is an annulus, then it may not be homotopic into another
component of $X$.

As with curves, we usually do not distinguish between isotopic
subsurfaces. We note that it nevertheless makes sense to
\emph{intersect subsurfaces $X$ and $Y$}: the intersection is the (up
to isotopy unique) subsurface $X \cap Y$ with the property that a
simple closed curve is homotopic into $X \cap Y$ exactly if it is
homotopic into both $X$ and $Y$. To see that this is well-defined,
assume that $X$ and $Y$ are connected, and that $\Sigma$ carries a
hyperbolic metric (the remaining cases are a straightforward extension).
The existence and uniqueness is the clear
if $X$ or $Y$ is an annulus (in which case the intersection is empty,
or that annulus). In the non-annular case, take the unique representatives of
$X, Y$ with geodesic boundary components. Then the intersection of
those representatives has the desired property. 

\begin{lemma}\label{lem:subsurface-chains}
  Suppose that $\Sigma$ is a finite type surface. Then there is a
  number $N=N(\Sigma)$ so that if
  \[ S_1 \subset S_2 \subset \cdots \subset S_n = S \] is a chain of
  subsurfaces, so that $S_i$ is not isotopic to $S_{i+1}$ for any
  $i$, then $n\leq N$.
\end{lemma}
\begin{proof}
  Observe that $S_{n-1}$ has either smaller genus than $S_n$, or fewer
  boundary components than $S_{n-1}$ (as, otherwise, $S_{n-1}$ and
  $S_n$ would be isotopic). Now the claim follows by induction on genus
  and number of boundary components, lexicographically ordered.
\end{proof}

Recall that a set of curves $\calc=\{\alpha_i, i \in I\}$ on $\Sigma$
\emph{fills} a (non-annular) connected subsurface $X$ if no essential
curve $\beta$ in $X$ is disjoint from $\cup \alpha_i$ up to
homotopy. In the case where $X$ is an annulus we say that the core curve
fills $X$. 

We recall the following standard fact.
\begin{lemma}\label{lem:subsurface-filled}
  Suppose that $\calc$ is a set of curves on $\Sigma$. Then there is a
  subsurface $S_\calc \subset \Sigma$ containing $\calc$, so that $\calc$ is
  a filling set of curves on $S_\calc$. The subsurface $S_\calc$ is unique up to isotopy, and is called the \emph{subsurface filled by $\calc$}.
\end{lemma}
\begin{proof}
  Let $S_\calc$ be a subsurface containing $\calc$ which is minimal
  under inclusion with this property. The existence of such a
  subsurface follows from Lemma~\ref{lem:subsurface-chains}. This
  subsurface is filled by $\calc$ (as otherwise, the complement of a
  curve in $S_\calc$ disjoint from $\calc$ is a smaller subsurface).
  If $S'$ were a different subsurface filled by $\calc$, then
  $S'\cap S_\calc$ would be a smaller subsurface containing $\calc$, violating
  minimality. Hence, uniqueness follows.
\end{proof}

\begin{lemma}\label{lem:fixing-filling}
  Suppose that $\calc$ is a set of curves, which is preserved by a
  mapping class $f$. Assume either that
  \begin{itemize}
  \item $\calc$ is finite, or
  \item $f$ fixes every curve in $\calc$, i.e.\ $f(\alpha) = \alpha$ for every $\alpha \in \calc$.
  \end{itemize}
  Then the restriction $f\vert_{S_\calc}$ is finite order. In
  particular, if $f$ is contained in $\Mod^0(\Sigma)$, then $f$ is
  supported in the complement of $S_\calc$.
\end{lemma}
\begin{proof}
  Under either assumption, a power of $f$ fixes every curve in
  $\calc$. This implies that $f\vert_{S_\calc} \in \Mod(S_\calc)$ has
  finite order (see e.g.\ \cite[Proposition~2.8]{FM}). If
  $f\in\Mod^0(\Sigma)$ it is rotationless, and therefore the
  restriction $f\vert_{S_\calc}$ is also rotationless, and therefore
  the identity.
\end{proof}

\paragraph*{Stabilisers.}
\label{sec:bg-stab}

Central to our arguments is an understanding of stabilisers of curves
and subsurfaces in the mapping class group. Suppose that $Y$ is a
subsurface of $\Sigma$, and $[\varphi] \in \Mod(\Sigma)$ is a mapping
class which preserves the isotopy class of $Y$. One can choose a
representative homeomorphism $\varphi$ which fixes $Y$
setwise. Furthermore, the restriction of $\varphi$ to $Y$ is unique up
to isotopy.

In other words, we have a \emph{restriction homomorphism} from the
stabiliser of $Y$ to the mapping class group of $Y$,
\[ \mathrm{Stab}_{\Mod(\Sigma)}(Y) \to \Mod(Y). \]

In particular, let $X = \{ \alpha_1, \ldots, \alpha_k \}$ be a
multicurve in $\Sigma$, and let $\Sigma_X = \Sigma\setminus X$ be the
complement.  
In this setting, we then also have a \emph{restriction homomorphism} 
\[ \mathrm{Stab}_{\Mod(\Sigma)}(X) \to \Mod(\Sigma_X) \] to the
mapping class group of the (possibly disconnected) surface $\Sigma_X$.
The kernel of this restriction homomorphism is exactly the subgroup
generated by the Dehn twists about the curves in $X$ (compare
\cite[Proposition 3.20]{FM}), and so is in particular free
abelian.  One could describe the image precisely as well, but we will
not need this description.\footnote{For the curious reader: each curve
  in $X$ gives rise to a pair of punctures in $\Sigma_X$ corresponding
  to the side. The image consists of those mapping classes which
  respect these pairs.}

\smallskip Now suppose $Y \subset \Sigma_X$ is a connected
component. If $f$ is a rotationless mapping class preserving $X$, then
$f$ actually preserves $Y$ setwise. This is because by finiteness of
$X$, some power of $f$ fixes all $\alpha_i$ -- hence, by being
rotationless, $f$ already fixes each $\alpha_i$. Furthermore, since
the $\alpha_i$ are fixed as oriented curves (again, by definition of
rotationless), $f$ preserves each side of $\alpha_i$, hence all
complementary components. Thus, we have restriction homomorphisms
\[ \mathrm{Stab}_{\Mod^0(\Sigma)}(X) \to \Mod(Y) \] from the
stabiliser of a multicurve to the mapping class groups of each
complementary component.

\paragraph*{Canonical reduction multicurve, classical.}
Suppose that $H < \Mod(\Sigma)$ is a subgroup. A multicurve is called
a \emph{reduction multicurve for $H$} (sometimes also called a reduction system in the literature) if it is preserved by $H$ (up to
isotopy). We then say that $H$ is \emph{reducible} if it admits a non-empty reduction multicurve.
We will need the following theorem of Ivanov which can guarantee the existence
of pseudo-Anosov elements.

\begin{theo}[{Ivanov, \cite[Theorem~5.9]{Iva-book}}]\label{theo:Ivanov}
Let $\Sigma$ be a finite type surface of negative
  Euler characteristic which is not a pair of pants. Suppose that
  $H < \Mod^0(\Sigma)$ is an infinite subgroup. Either $H$ contains a
  pseudo-Anosov element, or $H$ is reducible.
\end{theo}
A reduction multicurve is \emph{maximal} if it is maximal under inclusion.
We define the \emph{canonical reduction multicurve} for $H$ to be the
intersection of all maximal reduction multicurves (compare \cite[Chapter~7]{Iva-book}). Observe that no curve
that intersects the canonical reduction multicurve is fixed by $H$ (as
otherwise there would be a maximal reduction multicurve which contains that
curve).

We mention that in the case of a subgroup $H\subseteq\Mod^0(\Sigma)$, Ivanov's theorem implies that the restriction of $H$ to every connected component of the complement of the canonical reduction multicurve of $H$ either contains a pseudo-Anosov element or is reduced to the identity. Therefore, the canonical reduction multicurve can also be obtained as follows. First define the \emph{canonical reduction set} $\calc_H$ of $H$ as the collection of all essential simple closed curves that are $H$-invariant, up to isotopy. Then the canonical reduction multicurve of $H$ is the union of the isolated curves in $\calc_H$ and of the boundaries of the subsurface $S_{\calc_H}$ filled by $\calc_H$. 

We also need the following well-known lemma.
\begin{lemma}\label{lem:commute-disjoint-crs}
  Suppose that $f,g$ are two commuting infinite order elements. Then
  the canonical reduction systems of $f$ and $g$ are disjoint up to isotopy.
\end{lemma}
\begin{proof}
  Since $f,g$ commute, $f$ sends every $g$-invariant curve to a
  $g$-invariant curve, and therefore $f$ preserves the canonical
  reduction multicurve of $g$. However, if $\alpha$ is a curve in the
  canonical reduction system of $f$, then $f$ preserves no curve
  $\beta$ which intersects $\alpha$. This shows the lemma.
\end{proof}


One can use Ivanov's theorem to easily generate pseudo-Anosov
elements (compare e.g. the discussion in \cite[Section~2.4]{Man}).
\begin{lemma}\label{lem:generating-pA}
  Suppose that $\alpha, \beta$ fill $\Sigma$. Let $T_\alpha, T_\beta$ be
  multitwists about $\alpha, \beta$ so that all twist powers are
  nonzero. Then the group $\langle T_\alpha, T_\beta \rangle$ contains a pseudo-Anosov.
\end{lemma}
\begin{proof}
  Observe that the only curves fixed by $T_\alpha$ are disjoint from
  $\alpha$ up to isotopy (compare
  e.g. \cite[Proposition~3.2]{FM}). Hence, there is no curve which
  is fixed by both $T_\alpha$ and $T_\beta$. Ivanov's theorem (Theorem~\ref{theo:Ivanov}) now
  gives the lemma.
\end{proof}

\subsection{Handlebody background}

\paragraph*{Handlebodies.} By a \emph{handlebody} of (finite) genus $g\ge 0$, we mean a connected orientable $3$-manifold which is a disk-sum of $g$ copies of $D^2\times S^1$, where $D^2$ is a closed disk and $S^1$ is a circle. The boundary $\partial V$ of a handlebody $V$ of genus $g$ is a closed, connected, orientable surface of the same genus $g$. The \emph{extended handlebody group} $\Mod^\pm(V)$ is the mapping class group of $V$, i.e.\ the group of all isotopy classes of homeomorphisms of $V$. The \emph{handlebody group} $\Mod(V)$ is the subgroup formed by orientation-preserving homeomorphisms. There is a restriction homomorphism $\Mod^\pm(V)\to\Mod^\pm(\partial V)$, which is injective, thus allowing us to view $\Mod^\pm(V)$ as a subgroup of $\Mod^\pm(\partial V)$ (see e.g.\ \cite[Lemma~3.1]{Hen-primer}).

\paragraph*{Curves, meridians and annuli.} 

Let  $V$ be a handlebody. 
An essential simple closed curve on $\partial V$ is a \emph{meridian} (represented in blue in Figure~\ref{fig:meridian}) if it bounds a properly embedded disk in $V$. 

\begin{figure}
	\begin{center}
		\includegraphics[width=0.7\textwidth]{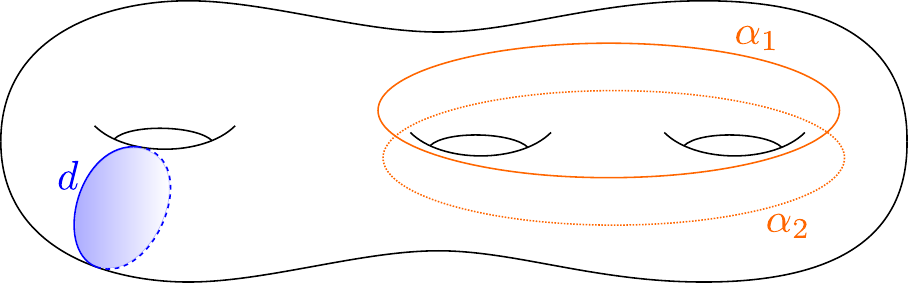}
	\end{center}
	\caption{On the left: a \emph{meridian} $d$, i.e.\ an essential curve bounding a disk in the handlebody.
		On the right: two curves $\alpha_1, \alpha_2$ which individually do not bound disks in the handlebody, and which are not homotopic on the boundary surface, but bound a properly embedded annulus in the handlebody.}
	\label{fig:meridian}
\end{figure}

If $c\subseteq\partial V$ is a meridian, then the Dehn twist $T_c$
associated to $c$ belongs to $\Mod(V)$, viewed as a subgroup of
$\Mod(\partial V)$ -- and this is in fact a characterisation of
meridians, as follows from \cite[Theorem~1]{McC} or
\cite[Theorem~1.11]{Oer}.

For multitwists, there is another possibility. Namely, a
pair $\{\alpha_1,\alpha_2\}$ of disjoint nonisotopic essential simple
closed curves on $\partial V$ is an \emph{annulus pair} (represented
in red in Figure~\ref{fig:meridian}) if neither $\alpha_1$ nor
$\alpha_2$ is a meridian, and there exists a properly embedded annulus
$A\subseteq V$ such that $\partial A= \alpha_1\cup \alpha_2$. An \emph{annulus twist} is a mapping class of the form
$T_{\alpha_1}T_{\alpha_2}^{-1}$ for some annulus pair
$\{\alpha_1,\alpha_2\}$. Annulus twists belong to $\Mod(V)$ (\cite[Theorem~1]{McC} or
\cite[Theorem~1.11]{Oer}).

\begin{lemma}\label{lemma:pA-in-complement}
Let $c$ be a meridian. Then every connected component of $\partial V\setminus c$ which is not a once-holed torus supports two handlebody group elements which both restrict to a pseudo-Anosov mapping class of $\partial V\setminus c$ and together generate a nonabelian free subgroup.
\end{lemma}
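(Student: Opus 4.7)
The plan is to implement Thurston's classical pseudo-Anosov construction inside the handlebody group, using Dehn twists about meridians, which by the characterization recalled just before the lemma automatically lie in $\Mod(V)$.

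Fix a connected component $S$ of $\partial V\setminus c$, viewed as a compact subsurface of $\partial V$ whose boundary is a union of copies of $c$. Let $V_S$ be the sub-handlebody obtained by cutting $V$ along the properly embedded disk bounded by $c$ and taking the piece whose boundary contains $S$. Under the hypothesis $g\ge 3$, in the main case of a non-separating meridian $c$ the handlebody $V_S$ has genus $g-1\ge 2$, so it has a rich disk complex; the separating case with both complementary handlebodies of genus at least $2$ is handled identically.

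The key step is to exhibit two filling multicurves $M_1,M_2$ in $S$ whose components are meridians of $V$. I would take $M_1$ to be the boundary of a cut system of $V_S$ (a collection of $g(V_S)$ disjoint meridian disks decomposing $V_S$ into a ball), isotoped on $\partial V_S$ to be disjoint from the one or two disks arising from the cut along $D$, and $M_2$ to be the boundary of a dual cut system, obtained from $M_1$ by a sequence of disk swaps and again isotoped into $S$. Since a cut system and a suitably chosen dual cut system can be arranged to fill the closed surface $\partial V_S$, and since $S$ differs from $\partial V_S$ only by the removal of one or two disks that can be placed inside a single complementary region of $M_1\cup M_2$, the pair $(M_1,M_2)$ still fills $S$ as a surface with boundary (possible boundary-parallel annular components of the complement being harmless for Thurston's construction).

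By Thurston's construction, for every sufficiently large $n$ the composition $\phi_n:=T_{M_1}^n T_{M_2}^{-n}$ restricts to a pseudo-Anosov on $S$. Since each component of $M_i$ is a meridian of $V$, the multitwists $T_{M_i}$ belong to $\Mod(V)$, and hence so does $\phi_n$; being supported on $S$ as a mapping class of $\partial V$, the element $\phi_n$ acts as the identity on every other component of $\partial V\setminus c$, so it restricts to a pseudo-Anosov of $\partial V\setminus c$. A second generator $\psi_n\in\Mod(V)$ with the same properties is produced analogously, starting from another filling pair $M_3,M_4\subset S$ chosen generically so that the attracting and repelling measured foliations of $\psi_n$ in $\PMF(S)$ differ from those of $\phi_n$; a standard ping-pong argument in $\PMF(S)$ then shows that $\langle\phi_n^N,\psi_n^N\rangle$ is free of rank $2$ for $N$ sufficiently large.

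The main technical obstacle is the filling property in the third paragraph: one must simultaneously arrange that $M_1$ and $M_2$ lie in $S$, that each of their components is a meridian of $V$, and that $M_1\cup M_2$ fills $S$. This reduces to elementary disk complex manipulations in the handlebody $V_S$ of genus at least $2$, with some care needed to control the intersections with the cut disk $D$.
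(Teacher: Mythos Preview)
Your strategy is plausible but leaves unfinished precisely the point you flag as the ``main technical obstacle'': the existence of two multicurves of meridians in $S$ that fill $S$. Your proposed construction (a cut system and a ``dual'' cut system obtained by disk swaps) is not substantiated --- you assert that such a pair can be arranged to fill $\partial V_S$, but this claim needs proof, and it is not an immediate consequence of disk-complex connectivity. In particular, disk swaps produce new meridians disjoint from most of the old ones, so it is not at all clear that iterating them yields a filling pair; one would need a separate argument, and your proposal does not supply one. Without this step, Thurston's construction does not get off the ground.

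The paper avoids this difficulty entirely by a different and more robust idea. Rather than insisting that the twisting curves be meridians, it observes that for \emph{any} essential non-peripheral curve $\alpha\subset X$, one can choose a curve $\alpha'\subset X$ bounding a pair of pants with $\alpha$ and a boundary component $c_1$ of $X$. Because $c_1$ is (a copy of) the meridian $c$, the curves $\alpha$ and $\alpha'$ are freely homotopic in $V$, so they are either both meridians or form an annulus pair; in either case $f_\alpha:=T_{\alpha'}T_\alpha^{-1}\in\Mod(V)$ is supported in $X$. Now one simply picks any filling pair $\alpha,\beta$ in $X$ (no meridian condition), notes that $\langle f_\alpha,f_\beta\rangle$ fixes no isotopy class of curve in $X$, and invokes the standard fact (Ivanov) that such a subgroup contains a pseudo-Anosov $\psi$; a conjugate $f_\alpha\psi f_\alpha^{-1}$ gives the second one, and high powers generate a free group. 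The annulus-twist trick is exactly what lets the paper work with arbitrary filling curves rather than filling systems of meridians, and is the missing idea in your approach.
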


\begin{proof}
  Let $X$ be a connected component of $\partial V\setminus c$ which is
  not a once-holed torus, and denote by $c_1$ a boundary component of $X$ 
  (corresponding to one of the sides of $c$). Fix an essential simple closed curve
  $\alpha \subset X$ which is not boundary parallel in $X$. We can (and shall) choose an essential simple closed curve $\alpha' \subset X$ which is not isotopic to $\alpha$ and  
  bounds a pair of pants on $X$ together with $c_1, \alpha$ (here, we
  are using that $X$ is not a once-holed torus). Since $c$ is a
  meridian, $\alpha, \alpha'$ are either both meridians, or form an
  annulus pair. Thus, in either case, the multitwist
  $f_\alpha = T_{\alpha'}T_\alpha^{-1}$ is a handlebody group element
  supported in $X$.

  By choosing curves $\alpha, \beta$ which fill $X$
  the group generated by $f_\alpha, f_\beta$ contains a pseudo-Anosov $\psi$ by Lemma~\ref{lem:generating-pA}. Conjugating $\psi$ by $f_\alpha$ yields a
  second one, and sufficiently high powers of $\psi$ and $f_{\alpha}\psi f_{\alpha}^{-1}$ generate a nonabelian free subgroup.
\end{proof}

Components of $\partial V\setminus c$ which are once-holed tori behave differently, as shown by the following lemma.

\begin{lem}\label{lem:meridian-stab-containment}
  Suppose that $c$ is a separating meridian, and suppose that $X$ is a
  component of $\partial V \setminus c$ which is a
  once-holed torus. Then $X$ contains a unique (nonseparating) meridian $d_X$ which is not peripheral in $X$ up to
  isotopy, and therefore  \[ \mathrm{Stab}_{\mathrm{Mod}(V)}(c) \subsetneq
    \mathrm{Stab}_{\mathrm{Mod}(V)}(d_X). \] If the genus of $V$ is at
  least $3$, then $d_X$ is the only other meridian whose stabiliser
  contains $\mathrm{Stab}_{\mathrm{Mod}(V)}(c)$ (or even a
  finite-index subgroup of $\Stab_{\Mod(V)}(c)$).
\end{lem}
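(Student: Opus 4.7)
The plan is to first establish the existence and uniqueness of $d_X$ by reducing to the mapping class picture of a solid torus, then deduce the strict inclusion of stabilizers from the asymmetry of the decomposition $\partial V \setminus c = X \sqcup Y$, and finally rule out all other candidate meridians by showing that their $\Stab_{\Mod(V)}(c)$-orbits would be infinite.

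For the first part, I would cut $V$ along a properly embedded disk $D$ bounded by $c$; this decomposes $V$ as a disk-sum of two handlebodies $V_1, V_2$. The component $V_1$ on the side of $X$ has boundary obtained from the once-holed torus $X$ by capping off $c$ with a copy of $D$, hence $\partial V_1$ is a torus and $V_1$ is a solid torus. In a solid torus there is, up to isotopy on the boundary torus, a unique essential simple closed curve bounding a disk, and this curve can be realized disjointly from the capping disk, yielding a simple closed curve $d_X \subset X$ which is nonperipheral in $X$; since every essential nonperipheral curve in a once-holed torus is nonseparating, $d_X$ is nonseparating on $\partial V$ as well. Uniqueness among meridians lying in $X$ follows from a standard innermost-disk surgery: any compressing disk for a meridian $d \subset X$ may be isotoped to be disjoint from $D$, hence realized in $V_1$, so $d = d_X$ up to isotopy.

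For the inclusion $\Stab_{\Mod(V)}(c) \subseteq \Stab_{\Mod(V)}(d_X)$, I would observe that any $\phi \in \Stab_{\Mod(V)}(c)$ permutes the components of $\partial V \setminus c$; since $V$ has genus at least $3$, the complementary component $Y$ has genus at least $2$, is not homeomorphic to $X$, so $\phi$ preserves $X$ setwise, and hence preserves the set of meridians of $X$ nonperipheral in $X$. By uniqueness this forces $\phi(d_X) = d_X$. The strict inclusion is witnessed by Lemma~\ref{lemma:pA-in-complement} applied to the meridian $d_X$: its complement $\partial V \setminus d_X$ is connected (as $d_X$ is nonseparating) of genus $g-1 \geq 2$ and so supports a handlebody group element acting there as a pseudo-Anosov, which fixes $d_X$ but cannot fix any simple closed curve in its support, including $c$.

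For the last part, let $d$ be a meridian different from $c$ and $d_X$ such that some finite-index subgroup $\Gamma \le \Stab_{\Mod(V)}(c)$ fixes $d$; then $\Stab_{\Mod(V)}(c) \cap \Stab_{\Mod(V)}(d)$ has finite index in $\Stab_{\Mod(V)}(c)$, so the $\Stab_{\Mod(V)}(c)$-orbit of $d$ is finite. If $d$ and $c$ have nonzero geometric intersection, then $T_c \in \Stab_{\Mod(V)}(c)$ (well-defined in $\Mod(V)$ because $c$ is a meridian) and the curves $T_c^n(d)$ are pairwise non-isotopic, a contradiction. Otherwise $d$ is disjoint from $c$ up to isotopy, and lies in $X$ or $Y$; the case $d \subset X$ contradicts the uniqueness of $d_X$ established in the first paragraph (as any meridian of $V$ lying in $X$ and distinct from $c$ is nonperipheral in $X$), while in the case $d \subset Y$ the assumption $g \ge 3$ makes $Y$ not a once-holed torus, so Lemma~\ref{lemma:pA-in-complement} provides a handlebody group element $\psi \in \Stab_{\Mod(V)}(c)$ supported in $Y$ whose restriction is pseudo-Anosov, and the curves $\psi^n(d)$ are pairwise non-isotopic, again contradicting finiteness of the orbit. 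The main obstacle I expect is the bookkeeping between $\Mod(V)$ and $\Mod(\partial V)$: verifying that $T_c$ and the pseudo-Anosov elements of Lemma~\ref{lemma:pA-in-complement} actually lie in the handlebody group, and tracking exactly where the hypothesis $g \ge 3$ enters (both to force $X \not\cong Y$ and to ensure $Y$ is not a once-holed torus).
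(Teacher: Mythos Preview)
Your proof is correct and follows essentially the same route as the paper: uniqueness of $d_X$, then the inclusion and its strictness via a pseudo-Anosov on $\partial V\setminus d_X$, and finally the exclusion of all other meridians using a pseudo-Anosov supported on the large complementary component $Y$. The only notable variation is your uniqueness argument: you cut along the disk bounded by $c$ and use the uniqueness of the meridian in a solid torus (together with an innermost-disk surgery to push compressing disks off $D$), whereas the paper argues purely on the surface, observing that any two meridians have algebraic intersection number zero while any two non-isotopic essential curves in a once-holed torus have nonzero algebraic intersection. Your case split in the last paragraph (using $T_c$ when $d$ meets $c$) is a small elaboration of the paper's argument---the pseudo-Anosov on $Y$ already handles that case, since any curve intersecting $c$ essentially must enter $Y$---but it is not wrong.
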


\begin{proof}
	The subsurface $X$ is the boundary of a once-spotted
	genus $1$ handlebody $V_1^1$. Hence, there is a nonseparating meridian $d_X$ contained
	in $X$. We claim that it is the only one up to isotopy. Namely, recall that in a once-holed torus
	any two isotopically distinct essential simple closed curves have nonzero algebraic intersection number (compare e.g. \cite[Lemma~2.1]{Hen-primer}). However, any two meridians have algebraic intersection number zero.
	
In particular $\Stab_{\Mod(V)}(c)\subseteq\Stab_{\Mod(V)}(d_X)$. This inclusion is strict: indeed, by Lemma~\ref{lemma:pA-in-complement}, there exists a handlebody group element $\varphi$ which fixes $d_X$ and restricts to a pseudo-Anosov homeomorphism on the complementary subsurface, in particular $\varphi$ does not fix the isotopy class of $c$.	
	
	To show the final claim, recall from Lemma~\ref{lemma:pA-in-complement} that there are elements in $\mathrm{Stab}_{\mathrm{Mod}(V)}(c)$ restricting to pseudo-Anosov elements on any component of $\partial V \setminus c$ which is not a once-holed torus. If the genus of $V$ is at least $3$, the complement of $X$ will be such a component. Hence, $d_X$ is the unique other meridian fixed by $\mathrm{Stab}_{\mathrm{Mod}(V)}(c)$ (or any finite-index subgroup).
\end{proof}

For the next corollary and below, we put
\[ \Mod^0(V) = \Mod(V) \cap \Mod^0(\partial V). \]
\begin{cor}\label{cor:once-holed}
  Let $c$ be a separating meridian. Let $\Sigma_1,\Sigma_2$ be the two components of $\partial V\setminus c$, and suppose that $\Sigma_2$ is a
  once-holed torus. 
  
  Then the kernel of the restriction homomorphism $\Stab_{\Mod^0(V)}(c)\to\Mod(\Sigma_1)$ is isomorphic to $\mathbb{Z}^2$. 
\end{cor}
  
\begin{proof}
  Suppose $f$ lies in the kernel of the restriction homomorphism
  $\Stab_{\Mod^0(V)}(c)\to\Mod(\Sigma_1)$. Then $f$ is supported on
  $\Sigma_1$, and the restriction $f\vert_{\Sigma_1}$ is rotationless.

  Furthermore, let $d \subset \Sigma_1$ be the unique meridian, which
  is also fixed by $f$ (compare
  Lemma~\ref{lem:meridian-stab-containment}). Since $\Sigma_1$ is a
  once-holed torus, the complement of $d$ in $\Sigma_1$ is a
  three-holed sphere. Since a mapping class of a three-holed sphere
  which preserves every component is trivial \cite[Proposition~2.3]{FM}, the description of
  stabilisers implies that $f$ is a product of the twist about $c$ and
  the twist about $d$. This shows the corollary.
\end{proof}



\begin{cor}\label{cor:sep-meridian}
Let $c$ be a separating meridian. Then there exists $g\in\Mod(V)$ such that for every $n\neq 0$, the curve $c$ is the only separating meridian whose isotopy class is fixed by $g^n$.
\end{cor}

\begin{proof}
  Let $S$ be the union of all complementary components of $c$ which
  are not once-holed tori. Observe that $c$ is the only
  separating curve which does not intersect $S$ (since a once-holed
  torus contains no essential separating curve).

  Hence, if we let $g$ be a handlebody group element
  which restricts to a pseudo-Anosov on each component of $S$ (which is possible
  by Lemma~\ref{lemma:pA-in-complement}), it has the desired property.  
\end{proof}

\subsection{Improved rotationless mapping classes}\label{sec:rotationless} 

In this section we discuss the following technical issue. For
inductive arguments with cocycles, it is convenient to consider finite
index subgroups of the mapping class group which consist only of
rotationless elements.  As discussed above, every mapping class $f$ in
$\Mod^0(\Sigma)$ is rotationless. If $S \subset \Sigma$ is a
subsurface which is preserved by $f$, then the restriction $f\vert_S$
is again rotationless by definition -- however, the restriction
$f\vert_S$ need not be contained in $\Mod^0(S)$. Namely: curves
contained in $S$, which are homologous when seen as curves $\Sigma$,
need not be homologous inside $S$.

To avoid this issue, we will prove in this section the following.
\begin{lemma}\label{lem:Mod1}
  Let $\Sigma$ be a connected surface. There is a finite index subgroup $\Mod^1(\Sigma)$ of
  $\Mod^0(\Sigma)$ such that if $h \in \Mod^1(\Sigma)$ is any element
  preserving a connected subsurface $S \subset \Sigma$, then the
  restriction $h\vert_S$ is an element of $\Mod^0(S)$.
\end{lemma}

\begin{rk}
Strictly speaking, it would be possible to avoid Lemma~\ref{lem:Mod1} and take a slightly different route in Section~\ref{sec:me}, see Remark~\ref{rk:rotationless-cocycle} there. A reader unfamiliar with homology arguments may therefore prefer to skip this part for now. On the other hand, we believe that from the geometric viewpoint, working with the finite-index subgroup $\Mod^1(V)$ is more natural, so we decided to include the present lemma. 
\end{rk}

Our proof of Lemma~\ref{lem:Mod1} constructs a specific finite-index
subgroup $\Mod^1(\Sigma)$ using covering spaces, and in the sequel of
the paper, we will work with this subgroup throughout. This
construction may be known to experts (but we were unable to
locate it in the literature).  Namely, denote by $p:X \to \Sigma$ the
\emph{mod-$2$-homology cover} of the surface $\Sigma$, which is the
cover defined by the surjection
\[ \pi_1(\Sigma, b_0) \to H_1(\Sigma; \mathbb{Z}/2) \] of the
fundamental group (for any basepoint $b_0$) to homology with mod-$2$--coefficients. Note that the
mod-$2$-homology cover $p:X\to\Sigma$ is \emph{characteristic}: every
homeomorphism of $\Sigma$ lifts to a homeomorphism of
$X$ (since the action on integral homology
preserves being divisible by $2$). Also note that if
$F:\Sigma \to \Sigma$ is a homeomorphism, then a lift
$\widetilde{F}:X \to X$ is well-defined up to the action on the deck
transformation group of the cover $X$.
\begin{de}\label{de:mod1}
  Let $\Sigma$ be a connected surface. The subgroup $\Mod^1(\Sigma)$
  consists of those mapping classes $[F]$ of $\Sigma$ which admit a
  lift $\widetilde{F}$ to $X$ that acts trivially on
  $H_1(X;\mathbb{Z}/3\mathbb{Z})$.

  \smallskip We put
  \[ \Mod^1(V)=\Mod(V)\cap\Mod^1(\partial V) \]
\end{de}

The advantage we gain by using the mod-$2$-homology cover is that lifts of subsurfaces
inject in homology, avoiding the problem mentioned
above:
\begin{lemma}\label{lem:cover-homology-inclusion}
  Let $S \subset \Sigma$ be an essential connected subsurface. Denote
  by $p:X \to \Sigma$ the mod-$2$-homology cover, and let
  $X_S \subset X$ be a connected component of $p^{-1}(S)$. Then the
  map
  \[ H_1(X_S; \mathbb{Z}) \to H_1(X; \mathbb{Z}) \] induced by the
  inclusion is injective, and the same is true with $\mathbb{Z}$
  replaced with $\mathbb{Z}/n$ for any $n$.
\end{lemma}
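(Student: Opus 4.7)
The plan is to reduce injectivity to showing that the complement $Y := \overline{X\setminus X_S}$ is connected, and then to prove this connectedness using the mod-$2$ cover structure. For the reduction, I apply the long exact sequence of $(X, X_S)$ with coefficients $A \in \{\mathbb{Z}, \mathbb{Z}/n\}$ combined with excision: $H_2(X, X_S; A) \cong H_2(Y, \partial Y; A) \cong A^{c(Y)}$, where $c(Y)$ is the number of components of $Y$ and the generators are the relative fundamental classes. The connecting homomorphism sends the class $[Y_j]$ to $\sum_{\alpha\subset\partial Y_j}[\alpha]$ via the inclusion $H_1(\partial X_S;A)\to H_1(X_S;A)$, whose kernel (for either choice of $A$) equals the cyclic subgroup generated by the total boundary class $\sum_{\alpha\subset\partial X_S}[\alpha]$. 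Therefore a partial sum indexed by a subset of boundary components $T\subset\partial X_S$ vanishes in $H_1(X_S;A)$ precisely when $T$ is empty or equals all of $\partial X_S$ (for $\mathbb{Z}/n$ this follows because the $\{0,1\}$-valued indicator vector $\chi_T$ is congruent modulo $n\geq 2$ to a multiple of $(1,\dots,1)$ only when $T$ is trivial or full). Since $X$ is connected and $X_S$ is nonempty, no component of $Y$ has empty boundary, so injectivity is equivalent to $Y$ being connected.

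To prove $Y$ is connected, I would analyze the bipartite adjacency graph $\Gamma$ whose vertices are the connected components of $p^{-1}(S)$ (parametrized by $V_S := G/G_S$, where $G := H_1(\Sigma;\mathbb{Z}/2)$ and $G_S := \mathrm{Im}(H_1(S;\mathbb{Z}/2)\to G)$) and of $p^{-1}(\Sigma\setminus S)$, with one edge for each boundary circle in $p^{-1}(\partial S)$ joining the two components it bounds. Connectedness of $X$ is equivalent to connectedness of $\Gamma$, and the goal reduces to showing that $X_S$ is not a cut vertex. The key input is the following: for a reference boundary component $c_1\subset\partial S$ and any other boundary $c_i\subset\partial S$, the ``shift element'' $h_{1i}\in G$ obtained by comparing lifts of a path from $c_1$ to $c_i$ through $S$ with lifts of a path through $\Sigma\setminus S$ has algebraic intersection $1\bmod 2$ with $c_1$ (at the basepoint the loop transitions transversally between the two sides of $c_1$), while every class in $G_S$ is represented by a cycle in the interior of $S$ and hence has zero algebraic intersection modulo $2$ with each boundary component of $S$. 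Therefore $h_{1i}\notin G_S$. This implies that every vertex on the complementary side of $\Gamma$ has a neighbor in $V_S\setminus\{X_S\}$, and a routing argument using the transitivity of the deck-group action on each vertex class then shows that $\Gamma\setminus\{X_S\}$ remains connected. When $\partial S$ has only one component no shift is needed: using Mayer--Vietoris one gets $G_S+G_{S'}=G$, so the bipartite graph becomes complete bipartite with positive multiplicities, and it stays connected after removing $X_S$ as soon as $G_S\ne G$, which holds whenever the complementary subsurface is essential.

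The main obstacle will be the combinatorial routing argument in the multi-boundary case: although each $h_{1i}\notin G_S$ provides local connectivity between sides of $\Gamma$, extracting global connectivity of $\Gamma\setminus\{X_S\}$ requires careful use of the deck-group symmetry together with the entire collection $\{h_{1i}\}_i$, and may involve a case split depending on whether the image $G_{S'}$ of a complementary component lies inside $G_S$ or not.
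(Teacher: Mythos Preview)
Your reduction of injectivity to connectedness of the complement $\overline{X\setminus X_S}$ is correct, and it is the same reduction the paper makes: the paper phrases it as constructing curves $\beta_i$ dual to the boundary classes $[\delta_i]$, but explicitly notes in parentheses that this amounts to showing the complement of $X_S$ is connected. Your packaging via the long exact sequence and excision is arguably cleaner than the paper's hands-on decomposition $H_1(X_S)=H_1(Y)\oplus\mathbb{Z}^k$.

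The real content, and the gap in your proposal, is the connectedness proof. Two concrete issues. First, your shift element $h_{1i}$ is defined via a path in $\Sigma\setminus S$ from $c_1$ to $c_i$; such a path exists only when $c_1$ and $c_i$ lie on the same complementary component. When $\Sigma\setminus S$ is disconnected your construction does not even start, and this case genuinely occurs. Second, even when all $h_{1i}$ are defined and lie outside $G_S$, you only obtain that each complementary vertex adjacent to $X_S$ has a second neighbour; this is far from connectedness of $\Gamma\setminus\{X_S\}$, and ``transitivity of the deck-group action'' by itself does not bridge the gap (the action is transitive on each side of the bipartition separately, not on $\Gamma\setminus\{X_S\}$). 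You flag this as the main obstacle, and it is.

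The paper supplies the missing global structure by choosing a basis $x_1,\dots,x_N$ of $H_1(\Sigma;\mathbb{Z})$ adapted to $S$: the first $k$ classes generate $\mathrm{Im}(H_1(S)\to H_1(\Sigma))$ and are realised by loops inside $S$, while each remaining $x_j$ is realised by a based loop meeting $\partial S$ in exactly two points. Lifts of the latter give paths in $X$ that leave $X_S$, enter exactly one neighbouring component, and land in a distinct translate of $X_S$. The connectedness of $\Gamma\setminus\{X_S\}$ then becomes the elementary fact that in the Cayley graph of $(\mathbb{Z}/2)^N$ with standard generators one can join any two elements outside the coordinate subgroup $(\mathbb{Z}/2)^k$ by a path avoiding that subgroup. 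This adapted-basis construction handles all complementary components at once, including planar ones that contribute no homology of their own, and is what you should substitute for the shift-element heuristic.
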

\begin{proof}
  Observe that there is nothing to show if $X_S$ has a single boundary
  component -- such surfaces always induce inclusions in
  homology. Hence we may assume that $X_S$ has at least two boundary
  components throughout. 

  To prove the lemma we will construct a basis of homology of $X_S$
  which is linearly independent in the homology of $X$. The reasons
  for independence will be geometric, and hence work with any
  coefficients. Thus, we restrict to the integral case for ease of
  notation.

  \begin{figure}[h!]
    \centering
    \includegraphics[width=0.8\textwidth]{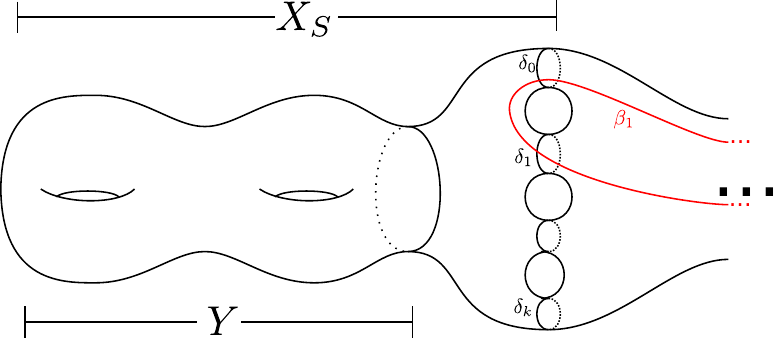}
    \caption{The setup in the proof of
      Lemma~\ref{lem:cover-homology-inclusion}. The subsurface $X_S$
      is decomposed into a surface $Y$ with a single boundary, and a
      bordered sphere. To control the homology classes defined by the
      boundary curves of that sphere, we construct auxiliary curves $\beta_i$.}
    \label{fig:hombasis}
  \end{figure}
  
  \smallskip First, we choose a subsurface $Y \subset X_S$ with one
  boundary component, so that $X_S \setminus Y$ is a bordered
  sphere. Then $H_1(Y;\mathbb{Z})$ injects into $H_1(X;\mathbb{Z})$,
  since $Y$ is a subsurface of $X$ with one boundary component. If we
  denote the boundary curves of $X_S$ by $\delta_0, \ldots, \delta_k$,
  then the homology of the bordered sphere $X_S \setminus Y$ is
  generated by the $[\delta_i]$ and in fact we have
  \[ H_1(X_S;\mathbb{Z}) = H_1(Y;\mathbb{Z}) \oplus \mathbb{Z}^k, \]
  where the latter summand is generated by
  $[\delta_1], \ldots, [\delta_k]$.

  \smallskip We now aim to show that for all $i > 0$ there is a curve
  $\beta_i$ which is disjoint from $Y$, intersects
  $\delta_0, \delta_i$ each in a single point, and is disjoint from
  all other $\delta_j$. This will show that
  $[\delta_1], \ldots, [\delta_k]$ are linearly independent from each
  other and from $H_1(Y;\mathbb{Z})$ in $H_1(X;\mathbb{Z})$ thus
  showing the lemma.
	
  \smallskip For simplicity of notation, we will perform the
  construction only for $i=1$. Choose a basepoint $\widetilde{q}$ in
  $Y$, and let $q = p(\widetilde{q})$ be its image in $\Sigma$. Since
  the mod-$2$ homology cover is normal (Galois), the preimage
  $p^{-1}(q)$ is exactly the orbit of $\widetilde{q}$ under the deck
  group $D = H_1(\Sigma;\mathbb{Z}/2)$.

  To describe the intersection $p^{-1}(q) \cap X_S$, first observe
  that since $X_S$ is connected, a point $\widetilde{q}'\in p^{-1}(q)$
  is contained in $X_S$ exactly if there is a path
  $\widetilde{\gamma}$ connecting $\widetilde{q}$ to $\widetilde{q}'$
  contained in $X_S$. Such paths are exactly the lifts of loops
  $\gamma$ based at $q$ which are contained in $S$. So
  $\widetilde{q}'$ is contained in $X_S$ if and only if the deck group
  element $g$ mapping $\widetilde{q}$ to $\widetilde{q}'$ is the image
  of some $\gamma \in \pi_1(S, q) \subseteq \pi_1(\Sigma,q)$. The
  image of $\pi_1(S, q)$ in the deck group is exactly the subgroup
  $D_S = \mathrm{im}(H_1(S;\mathbb{Z}/2) \to
  H_1(\Sigma;\mathbb{Z}/2))$. Together this shows that
  $p^{-1}(q) \cap X_S = D_S\widetilde{q}$.
        
	Similarly, the components of $p^{-1}(S)$ can be identified with the	cosets of the subgroup $D_S \subseteq D$.
	
	To describe the cover more precisely, we choose curves $\gamma_i$ based at $q$
	in the following way:
	\begin{enumerate}[(1)]
        \item The homology classes $[\gamma_i] = x_i$ form a basis
          $x_1, \ldots, x_N$ of $H_1(\Sigma;\mathbb{Z})$,
        \item $x_1, \ldots, x_k$ is a basis of
          $\mathrm{im}(H_1(S;\mathbb{Z}) \to H_1(\Sigma;\mathbb{Z}))$,
          and the curves $\gamma_i$ are contained in $S$.
        \item The curves $\gamma_i$ for $i = k+1, \ldots, N$ intersect
          $\partial S$ in exactly two points. 
        \end{enumerate}
        To see that these curves exist, we argue as follows. Denote by
        $S_1, \ldots, S_r$ the components of $\Sigma \setminus S$.  Choose a
        curve $\alpha_i \subset \partial S_i$. The connectivity of $S$ implies that for every $i\in\{1,\dots,r\}$, the curve $\alpha_i$
        is homologically nontrivial (in $H_1(\Sigma)$) exactly if
        $\partial S_i$ has more than one component.  For each boundary curve
        $\beta \subset \partial S_i \setminus \alpha_i$ we can
        find a loop $\gamma_\beta$ based at $q$ which intersects
        $\partial S$ in two points, one on $\beta$ and one on
        $\alpha_i$. We can thus choose independent homology classes
        $z_i$ defined by curves intersecting $\partial S$ in at most
        two points, so that for any $x \in H_1(\Sigma)$ there is a
        linear combination $z$ of the $z_i$, so that $x+z$ has
        algebraic intersection number $0$ with all curves in
        $\partial S$. Any such class $x+z$ can be realised by a
        multicurve disjoint from $\partial S$. Since every homology
        class defined by a curve (without specified basepoint) in $S_i$ can be realised by a loop
        based at $q$ which intersects $\partial S$ in two
        points, and every curve in $S$ can be realised by a loop disjoint from $\partial S$ the desired existence follows. 

        Lifting a curve of the type in (2) at a point
        $h\widetilde{q}$ stays in the same connected component $hX_S$,
        while lifting a curve of the type in (3) joins $hX_S$ to
        $h'X_S$ and intersects $\partial hX_S$ in a single point. To
        see that last claim observe that a lift of a curve as in (3)
        cannot join two points of $hX_S$, as the image of that curve
        in $ H_1(\Sigma;\mathbb{Z}/2)$ would then be contained in
        $\mathrm{im}(H_1(S;\mathbb{Z}/2) \to
        H_1(\Sigma;\mathbb{Z}/2))$, contradicting (1) and (2).
	
	For every $i\in\{0,1\}$, denote by $Z_i$ the
        component of $X \setminus p^{-1}(S)$ adjacent to $\delta_i$.
        There are
        $h_i \notin \mathrm{im}(H_1(S;\mathbb{Z}/2) \to
        H_1(\Sigma;\mathbb{Z}/2))$, so that $h_iX_S$ are surfaces
        adjacent to $Z_i$.  Namely, either $p(Z_i)$ has genus (which is
        automatically the case if $p(\delta_i)$ is separating), and
        contains a curve defining one of the $x_j$ (as in (3)), or $p(Z_i)$ is a punctured sphere so that for the
        boundary component $p(\delta_i)$ there is some $x_j, j>k$ (of
        the third type) which intersects it once (namely, if all $x_i$
        would interesect $p(\delta_i)$ in an even number of points,
        the $x_i$ could not be a basis of $H_1(\Sigma;\mathbb{Z})$,
        since $p(\delta_i)$ is nonseparating).  In both cases the
        desired component is $\pm[x_j]X_S$.  Choose paths $c_i, i=0,1$
        joining $\widetilde{q}$ to $h_i\widetilde{q}$ which intersect
        only $\delta_i$ among the $\delta_j$.
	
	Since
	$\mathrm{im}(H_1(S;\mathbb{Z}/2) \to H_1(\Sigma;\mathbb{Z}/2)) =
	(\mathbb{Z}/2)^k$ is a subgroup of
	$H_1(\Sigma;\mathbb{Z}/2)) = (\mathbb{Z}/2)^N$ generated by a subset
	of the generators, there is a path in the Cayley graph of
	$H_1(\Sigma;\mathbb{Z}/2)$ from $h_0$ to $h_1$ which is disjoint
	from the Cayley graph of the subgroup
	$\mathrm{im}(H_1(S;\mathbb{Z}/2) \to H_1(\Sigma;\mathbb{Z}/2)$. Each edge in such a path
	corresponds to a right multiplication $h \mapsto hx_s$, and we can choose a corresponding
	path joining $h\widetilde{q}$ to $hx_s\widetilde{q}$ which is disjoint from $X_S$.
	By concatenating these paths with $c_0, c_1$ (in the right order) we
	then find the desired path $\beta_1$.
\end{proof}

\begin{cor}\label{cor:really-good-subgroup}
  Suppose that $F$ is a homeomorphism so that
  \begin{enumerate}
  \item $F$ admits a lift $\widetilde{F}$ to the
    mod-$2$-homology-cover $X$, which acts trivially on
    $H_1(X;\mathbb{Z}/3)$
  \item $F$ preserves a subsurface $S \subset \Sigma$
  \end{enumerate}
  Then the restriction $F\vert_S$ acts trivially on
  $H_1(S;\mathbb{Z}/3)$.
\end{cor}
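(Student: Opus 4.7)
The plan is to descend from the triviality of $\widetilde f$ on $H_1(X;\mathbb{Z}/3)$ to that of $f|_S$ on $H_1(S;\mathbb{Z}/3)$ by passing through a connected component $X_S$ of the preimage $W:=p^{-1}(S)$, combining Lemma~\ref{lem:cover-homology-inclusion} with a transfer argument. To set the stage: since $f(S)=S$, the lift $\widetilde f$ preserves $W$ and permutes its finitely many connected components. For any component $X_S$, the restriction $p|_{X_S}:X_S\to S$ is a finite regular covering of degree $|D_S|$, where $D_S=\mathrm{im}(H_1(S;\mathbb{Z}/2)\to H_1(\Sigma;\mathbb{Z}/2))$. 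This degree is a power of $2$, hence coprime to $3$, so the transfer map shows that $(p|_{X_S})_{*}:H_1(X_S;\mathbb{Z}/3)\to H_1(S;\mathbb{Z}/3)$ is surjective.

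First, I would prove that $\widetilde f$ preserves each connected component of $W$ individually. Once this is granted, the rest of the argument is essentially formal. Fix any such component $X_S$ and set $h:=\widetilde f|_{X_S}$, a self-homeomorphism of $X_S$ lifting $f|_S$ through $p|_{X_S}$. The intertwining relation $(\iota_{X_S})_{*}\circ h_{*}=\widetilde f_{*}\circ(\iota_{X_S})_{*}=(\iota_{X_S})_{*}$, in which the second equality uses the assumed triviality of $\widetilde f_{*}$ on $H_1(X;\mathbb{Z}/3)$, combined with the injectivity of $(\iota_{X_S})_{*}$ from Lemma~\ref{lem:cover-homology-inclusion} applied to $\mathbb{Z}/3$ coefficients, forces $h_{*}=\mathrm{id}$ on $H_1(X_S;\mathbb{Z}/3)$. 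From the naturality relation $(p|_{X_S})_{*}\circ h_{*}=(f|_S)_{*}\circ(p|_{X_S})_{*}$ and the surjectivity of $(p|_{X_S})_{*}$ established above, one concludes that $(f|_S)_{*}$ is the identity on $H_1(S;\mathbb{Z}/3)$, as desired.

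The hard part will be the component-preservation step. The triviality of $\widetilde f_{*}$ on $H_1(X;\mathbb{Z}/3)$ says in particular that every boundary loop $\delta$ of $W$ satisfies $[\widetilde f(\delta)]=[\delta]$ in $H_1(X;\mathbb{Z}/3)$. If $\widetilde f$ sent a component $X_S$ of $W$ to a distinct component $X_S'$, then $\delta\subset\partial X_S$ and $\widetilde f(\delta)\subset\partial X_S'$ would be distinct boundary loops of $W$ (lying on disjoint components of $W$) carrying the same class in $H_1(X;\mathbb{Z}/3)$. The plan is to use the dual arcs $\beta_i$ from the proof of Lemma~\ref{lem:cover-homology-inclusion}, refined so that they meet $\partial W$ only in prescribed loops of $\partial X_S$ and are disjoint from every other component of $W$, in order to detect such a mismatch by intersection-number computations and derive a contradiction. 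This intersection-theoretic refinement of the dual-arc construction, needed to separate the images of different components of $W$ in $H_1(X;\mathbb{Z}/3)$, is the main technical point of the proof.
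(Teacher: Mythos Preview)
Your overall route---pass to a component $X_S\subset p^{-1}(S)$, use the injectivity from Lemma~\ref{lem:cover-homology-inclusion} to see that the restriction of $\widetilde f$ acts trivially on $H_1(X_S;\mathbb{Z}/3)$, then push down via $(p|_{X_S})_*$ using that the covering degree is a power of $2$---is exactly the paper's. The paper, however, simply writes ``the restriction $\widetilde f_{X_S}$'' without further comment; it does not isolate or justify the component-preservation step you flag. So on the skeleton you and the paper agree, and you are being more scrupulous than the published argument on this point.

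The gap is in your plan for that step. The curves $\beta_i$ built in the proof of Lemma~\ref{lem:cover-homology-inclusion} are obtained by concatenating lifts of the basis loops $\gamma_j$ along a Cayley-graph path from $h_0$ to $h_1$ that stays \emph{outside} the subgroup $D_S$; concretely, $\beta_i$ threads through several other components $hX_S$ of $W=p^{-1}(S)$ on its way from $\delta_0$ back to $\delta_i$. Your proposed ``refinement''---making $\beta_i$ disjoint from every component of $W$ other than $X_S$---would force the portion of $\beta_i$ outside $X_S$ to lie in a single component of $X\setminus W=p^{-1}(\Sigma\setminus S)$, which in turn requires $\delta_0$ and $\delta_i$ to be adjacent to the same such component. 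There is no reason this holds in general, so the refinement is not a tweak of the existing construction but a new, unproved topological claim. Even granting such $\beta_i$, you would still need that boundary loops of $X_S$ are separated in $H_1(X;\mathbb{Z}/3)$ from boundary loops of \emph{every} other component---equivalently, that no deck translate $g\delta_j$ with $g\notin D_S$ shares the class of any $\delta_k$---and your sketch does not supply this either. As written, the component-preservation argument is a genuine gap.
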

\begin{proof}
  Let $\alpha \subset S$ be a simple closed curve which is part of a
  basis for $H_1(S;\mathbb{Z}/3)$.  Then there is a power $N = 2^n$ so
  that $\alpha^N$ lifts to a curve $\widetilde{\alpha} \subset X_S$
  (with notation as in the previous lemma), since $p$ is a
  finite-sheeted cover.
	
  Denote by $p_S: X_S \to S$ the restriction of the covering map
  (which is then also a covering). We have
  $(p_S)_\ast[\widetilde{\alpha}] = N[\alpha]$. Since $N$ is
  invertible mod $3$, there is a multiple $k$ so that
  $(p_S)_\ast k[\widetilde{\alpha}] = [\alpha]$ mod $3$.
	
  By Lemma~\ref{lem:cover-homology-inclusion}, the inclusion of $H_1(X_S;\mathbb{Z}/3)$
  into $H_1(X;\mathbb{Z}/3)$ is injective. Since $\widetilde{F}$ acts
  trivially on $H_1(X;\mathbb{Z}/3)$, this implies that the
  restriction $\widetilde{F}_{X_S}$ acts trivially on
  $H_1(X_S;\mathbb{Z}/3)$.  Hence, we have
  $(\widetilde{F}_{X_S})_\ast k[\widetilde{\alpha}] =
  k[\widetilde{\alpha}]$. Since $\widetilde{F}_{X_S}$ is a lift of
  $F_S$ this implies $(F_S)_\ast[\alpha] = [\alpha]$.
\end{proof}

The central Lemma~\ref{lem:Mod1} is now an immediate consequence of
Corollary~\ref{cor:really-good-subgroup} and the fact that there are
only finitely many automorphisms of $H_1(X;\mathbb{Z}/3\mathbb{Z})$.

\subsection{Infinite conjugacy classes}

 A countable group $G$ is said to be \emph{ICC} (standing for \emph{infinite conjugacy classes}) if the conjugacy class of every nontrivial element of $G$ is infinite.

\begin{lemma}\label{lemma:icc}
  Let $V$ be a handlebody of genus at least $2$, and let $\varphi \in \Mod^{\pm}(V)$ be a handlebody group element. Then either the conjugacy class of $\varphi$ is infinite,
  or $\varphi$ fixes the isotopy class of every meridian.
  
  In particular, when the genus of $V$ is at least $3$, the group $\Mod^{\pm}(V)$ is ICC.
\end{lemma}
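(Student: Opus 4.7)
The plan is to exploit that the Dehn twist $T_c$ about any meridian $c$ lies in $\Mod(V)$, together with the standard computation $\varphi T_c^n \varphi^{-1}=T_{\varphi(c)}^n$, to show that finite conjugacy class forces $\varphi$ to preserve each meridian.

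More precisely, I would first assume that $\varphi\in\Mod(V)$ has finite conjugacy class, which is equivalent to saying that its centralizer $Z(\varphi)$ has finite index in $\Mod(V)$. Fix any meridian $c$ of $\partial V$; as recalled after the definition of meridians, the Dehn twist $T_c$ belongs to $\Mod(V)$. Since $Z(\varphi)$ has finite index, there exists an integer $n\ge 1$ such that $T_c^n\in Z(\varphi)$, i.e.\ $\varphi T_c^n\varphi^{-1}=T_c^n$. Using the standard relation $\varphi T_c\varphi^{-1}=T_{\varphi(c)}$ (valid in $\Mod(\partial V)$, where $\Mod(V)$ embeds), I would rewrite this as $T_{\varphi(c)}^n=T_c^n$. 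The classical fact that a nontrivial power of a Dehn twist determines the underlying curve (as an unoriented isotopy class) then yields $\varphi(c)=c$. Since $c$ was arbitrary, $\varphi$ fixes the isotopy class of every meridian.

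For the "in particular" clause, when $V$ has genus at least $3$, I would invoke the theorem of Korkmaz and Schleimer already quoted in the introduction, which identifies $\Mod(V)$ with $\Aut(\mathbb{D})$. Under this identification, fixing every vertex of $\mathbb{D}$ (that is, every meridian) forces $\varphi$ to be the identity of $\Aut(\mathbb{D})$, and therefore of $\Mod(V)$. Combined with the first half of the statement, this shows that any $\varphi\neq 1$ has infinite conjugacy class, which is the ICC property.

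The only point requiring care is the step $T_{\varphi(c)}^n=T_c^n\Rightarrow\varphi(c)=c$; this is however a standard fact about mapping class groups of surfaces (two Dehn twists have a common nontrivial power only if they coincide as unoriented isotopy classes), and no handlebody-specific input is needed there. Otherwise the argument is essentially the observation that the Dehn twists $T_c$ generate enough of $\Mod(V)$ to detect the action on meridians via the centralizer of $\varphi$, and all remaining content lies in the already quoted work of Korkmaz--Schleimer for the faithfulness of the action on $\mathbb{D}$ in genus at least $3$.
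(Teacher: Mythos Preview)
Your proof is correct and follows essentially the same approach as the paper: both arguments use that a finite conjugacy class forces some power $T_c^n$ to commute with $\varphi$, rewrite $\varphi T_c^n\varphi^{-1}=T_{\varphi(c)}^n$, and conclude $\varphi(c)=c$ from the fact that Dehn twist powers determine the curve, then invoke Korkmaz--Schleimer for the ICC conclusion in genus $\ge 3$. The only cosmetic difference is that the paper extracts the power $n$ via a pigeonhole argument on the conjugates $T_c^i\varphi T_c^{-i}$ rather than explicitly passing through the finite-index centralizer, but the content is identical.
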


We remark that in genus $2$, 
the hyperelliptic involution fixes the isotopy class of every essential simple closed curve on $\partial V$, and its conjugacy class is finite in $\Mod^{\pm}(V)$. 

\begin{proof}
Suppose that $\varphi$ is an element with finite conjugacy class. For any meridian $c$,
consider the elements $T_c^i\varphi T_c^{-i}$ for $i \in \mathbb{N}$. By finiteness of
the conjugacy class, two of these have to be equal, and thus there is some $N>0$ so that
\[ T_c^N\varphi T_c^{-N} = \varphi, \]
or equivalently,
\[ T_c^N = \varphi T_c^{N}\varphi^{-1} = T_{\varphi(c)}^N. \]
This implies $c$ is isotopic to $\varphi(c)$, see e.g.\ \cite[Section~3.3]{FM} (which also holds for orientation-reversing mapping classes). The first part of the lemma follows since $c$ was arbitrary. The fact that $\Mod^{\pm}(V)$ is ICC when the genus is at least $3$ follows because every element fixing the isotopy class of every meridian is then trivial \cite[Theorem~9.4]{KS}.
\end{proof}

\section{Background on measured groupoids}

The reader is refered to \cite[Section~2.1]{AD}, \cite{Kid-survey} or \cite[Section~3]{GH} for general background on measured groupoids. 

Recall that a \emph{standard Borel space} is a measurable space associated to a Polish space (i.e.\ separable and completely metrizable). A \emph{standard probability space} is a standard Borel space equipped with a Borel probability measure.

A \emph{Borel groupoid} is a standard Borel space $\calg$ (whose elements are thought of as being arrows) equipped with two Borel maps $s,r:\calg\to Y$ towards a standard Borel space $Y$ (giving the source and range of an arrow), and coming with a measurable (partially defined) composition law and inverse map and with a unit element $e_y$ per $y\in Y$. The Borel space $Y$ is called the \emph{base space} of the groupoid $\calg$. All Borel groupoids considered in the present paper are assumed to be \emph{discrete}, i.e.\ there are countably many arrows in $\calg$ with a given range (or source). It follows from a theorem of Lusin and Novikov (see e.g.\ \cite[Theorem~18.10]{Kec}) that a discrete Borel groupoid $\calg$ can always be written as a countable disjoint union of \emph{bisections}, i.e.\ Borel subsets $B$ of $\calg$ on which $s$ and $r$ are injective (in which case $s(B)$ and $r(B)$ are Borel subsets of $Y$, see \cite[Corollary~15.2]{Kec}). A Borel groupoid $\calg$ with base space $Y$ is \emph{trivial} if $\calg=\{e_y|y\in Y\}$.

A finite Borel measure $\mu$ on $Y$ is \emph{quasi-invariant} for the groupoid $\calg$ if for every bisection $B\subseteq\calg$, one has $\mu(s(B))>0$ if and only if $\mu(r(B))>0$. A \emph{measured groupoid} is a Borel groupoid together with a quasi-invariant finite Borel measure on its base space $Y$. 

An important example of a measured groupoid to keep in mind is the following: when a countable group $G$ acts on a standard probability space $Y$ by Borel automorphisms in a quasi-measure-preserving way, then $G\times Y$ has a natural structure of a measured groupoid over $Y$, denoted by $G\ltimes Y$: the source and range maps are given by $s(g,y)=y$ and $r(g,y)=gy$, the composition law is $(g,hy)(h,y)=(gh,y)$, the inverse of $(g,y)$ is $(g^{-1},gy)$ and the units are $e_y=(e,y)$.

A Borel subset $\calh\subseteq\calg$ which is stable under composition and inverse and contains all unit elements of $\calg$ has the structure of a \emph{measured subgroupoid} of $\calg$ over the same base space $Y$. Given a Borel subset $U\subseteq Y$, the \emph{restriction} $\calg_{|U}$ is the measured groupoid over $U$ defined by only keeping the arrows whose source and range both belong to $U$. Given two subgroupoids $\calh,\calh'\subseteq\calg$, we denote by $\langle\calh,\calh'\rangle$ the subgroupoid of $\calg$ generated by $\calh$ and $\calh'$, i.e.\ the smallest subgroupoid of $\calg$ containing $\calh$ and $\calh'$ (it is made of all arrows obtained as finite compositions of arrows in $\calh$ and arrows in $\calh'$).

A measured groupoid $\calg$ with base space $Y$ is \emph{of infinite type} if for every positive measure Borel subset $U\subseteq Y$ and almost every $y\in U$, there are infinitely many elements of $\calg_{|U}$ with source $y$. Observe that if $\calg$ is of infinite type, then for every Borel subset $U\subseteq Y$ of positive measure, the restricted groupoid $\calg_{|U}$ is again of infinite type. 

Let $\calg$ be a measured groupoid over a standard probability space $Y$, and let $G$ be a countable group. A \emph{strict cocycle} $\rho:\calg\to G$ is a Borel map such that for all $g_1,g_2\in\calg$, if the source of $g_1$ is equal to the range of $g_2$ (so that the product $g_1g_2$ is well-defined), then $\rho(g_1g_2)=\rho(g_1)\rho(g_2)$. The \emph{kernel} of a cocycle $\rho$ is the subgroupoid of $\calg$ made of all $g\in\calg$ such that $\rho(g)=1$. We say that $\rho$ has \emph{trivial} kernel if its kernel is equal to the trivial subgroupoid of $\calg$, i.e.\ it only consists of the unit elements of $\calg$. We say that a strict cocycle $\calg\to G$ is \emph{action-type} if $\rho$ has trivial kernel, and whenever $H\subseteq G$ is an infinite subgroup, and $U\subseteq Y$ is a Borel subset of positive measure, then $\rho^{-1}(H)_{|U}$ is a subgroupoid of $\calg_{|U}$ of infinite type. Note that if $\rho:\calg\to G$ is an action-type cocycle, then for every positive measure Borel subset $U\subseteq Y$, the restriction $\rho:\calg_{|U}\to G$ is again action-type. An important example is that given a measure-preserving $G$-action on a standard probability space $Y$, the natural cocycle $\rho:G\ltimes Y\to G$ is action-type \cite[Proposition~2.26]{Kid-survey}. We warn the reader that in the latter example, it is important that the $G$-action on $Y$ preserves the measure, as opposed to only quasi-preserving it. 

Given a Polish space $\Delta$ equipped with a $G$-action by Borel automorphisms, we say that a measurable map $\varphi:Y\to\Delta$ is \emph{$(\calg,\rho)$-equivariant} if there exists a conull Borel subset $Y^*\subseteq Y$ such that for every $g\in\calg_{|Y^*}$, one has $\varphi(r(g))=\rho(g)\varphi(s(g))$. We say that an element $\delta\in\Delta$ is \emph{$(\calg,\rho)$-invariant} if the constant map with value $\delta$ is $(\calg,\rho)$-equivariant (equivalently, there exists a conull Borel subset $Y^*\subseteq Y$ such that $\rho(\calg_{|Y})\subseteq\Stab_G(\delta)$). The \emph{$(\calg,\rho)$-stabilizer} of $\delta$ is the subgroupoid of $\calg$ made of all elements $g$ such that $\rho(g)\in\Stab_G(\delta)$. A measurable map $\varphi:Y\to\Delta$ is \emph{stably $(\calg,\rho)$-equivariant} if one can partition $Y$ into at most countably many Borel subsets $Y_i$ such that for every $i$, the map $\varphi_{|Y_i}$ is $(\calg_{|Y_i},\rho)$-equivariant.

Given two measured subgroupoids $\calh,\calh'\subseteq\calg$, we say that $\calh$ is \emph{stably contained} in $\calh'$ if there exist a conull Borel subset $Y^*\subseteq Y$ and a partition $Y^*=\dunion_{i\in I}Y_i$ into at most countably many Borel subsets such that for every $i\in I$, one has $\calh_{|Y_i}\subseteq\calh'_{|Y_i}$. We say that $\calh$ and $\calh'$ are \emph{stably equal} if there exist a conull Borel subset and a partition as above such that for every $i\in I$, one has $\calh_{|Y_i}=\calh'_{|Y_i}$. We say that $\calh$ is \emph{stably trivial} if it is stably equal to the trivial subgroupoid of $\calg$.

Let $\calh$ be a measured subgroupoid of $\calg$, and $B\subseteq\calg$ be a bisection. We say that $\calh$ is \emph{$B$-invariant} if there exists a conull Borel subset $Y^*\subseteq Y$ such that for every $g_1,g_2\in B\cap\calg_{|Y^*}$ and every $h\in\calg_{|Y^*}$ such that the composition $g_2hg_1^{-1}$ is well-defined, we have $h\in\calh_{|Y^*}$ if and only if $g_2hg_1^{-1}\in\calh_{|Y^*}$. Let now $\calh'$ be another measured subgroupoid of $\calg$. The groupoid $\calh$ is \emph{normalized} by $\calh'$ if $\calh'$ can be covered by countably many bisections $B_n\subseteq\calg$ in such a way that $\calh$ is $B_n$-invariant for every $n\in\mathbb{N}$. The subgroupoid $\calh$ is \emph{stably normalized} by $\calh'$ if one can partition $Y$ into at most countably many Borel subsets $Y_i$ in such a way that for every $i$, the groupoid $\calh_{|Y_i}$ is normalized by $\calh'_{|Y_i}$. When $\calh\subseteq\calh'$, we will simply say that $\calh$ is \emph{stably normal} in $\calh'$.

There is a notion of \emph{amenability} of a measured groupoid, generalizing Zimmer's notion of amenability of a  group action, for which we refer to \cite{Kid-survey}; here we only list the properties of amenable groupoids we will need. First, if $\calg$ is amenable and comes with a cocycle $\rho:\calg\to G$ towards a countable group $G$, and if $G$ acts by homeomorphisms on a compact metrizable space $K$, then there exists a $(\calg,\rho)$-equivariant Borel map $Y\to\Prob(K)$, see \cite[Proposition~4.14]{Kid-survey}. Here $\Prob(K)$ denotes the set of Borel probability measures on $K$, equipped with the weak-$\ast$ topology coming from the duality with the space of real-valued continuous functions on $K$ given by the Riesz--Markov--Kakutani theorem. Second, whenever $\rho:\calg\to G$ is a cocycle with trivial kernel, and $A\subseteq G$ is an amenable subgroup of $G$, then $\rho^{-1}(A)$ is an amenable subgroupoid of $\calg$ (see e.g.\ \cite[Corollary~3.39]{GH}). Amenability is stable under subgroupoids and restrictions. Furthermore, if there exists a conull Borel subset $Y^*\subseteq Y$ and a partition $Y^*=\dunion_{i\in I}Y_i$ into at most countably many Borel subsets such that for every $i\in I$, the groupoid $\calg_{|Y_i}$ is amenable, then $\calg$ is amenable (this is immediate with the definition of amenability given in \cite[Definition~3.33]{GH}, see also \cite[Remark~3.34]{GH} for the comparison to equivalent definitions).

A groupoid $\calg$ over a standard probability space $Y$ is \emph{everywhere nonamenable} if for every Borel subset $U\subseteq Y$ of positive measure, the groupoid $\calg_{|U}$ is nonamenable.

Let us finish this section with the following lemma that we will use several times in the sequel.

\begin{lemma}\label{lemma:zorn}
Let $(X,\mu)$ be a standard probability space, and let $\mathfrak{F}$ be a set of Borel subsets of $X$ which is closed under countable unions.

Then $\mathfrak{F}$ has a maximal element $U$, i.e.\ such that every $V\in\mathfrak{F}$ has a conull Borel subset contained in $U$.
\end{lemma}

\begin{proof}
We claim that $\mathfrak{F}$ contains a subset $U$ of maximal measure. Indeed, if $(U_n)_{n\in\mathbb{N}}$ is a measure-maximizing sequence of subsets in $\mathfrak{F}$, then the union $U$ of the subsets $U_n$ belongs to $\mathfrak{F}$ by assumption, and has maximal measure.

Let now $U$ be as above, and let $V\in\mathfrak{F}$. If $V\setminus U$ were not a null set, then the measure of $U\cup V$ would be strictly larger than that of $U$, a contradiction. So $V$ has a conull Borel subset contained in $U$, as desired.
\end{proof}

\section{Measure equivalence rigidity of the handlebody group}\label{sec:me}

In this section, we prove the main theorem of the present paper. Throughout the section $V$ will always be a handlebody of genus at least $3$.

\begin{theo}\label{theo:main}
Let $V$ be a handlebody of genus at least $3$. Then $\Mod(V)$ is ME-superrigid.
\end{theo}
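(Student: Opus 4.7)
The plan is to follow Kida's strategy from \cite{Kid}, adapted to the handlebody setting as sketched in the introduction. By the general measure-equivalence machinery originating in work of Furman \cite{Fur2}, I would first reduce Theorem~\ref{theo:main} to a cocycle rigidity statement: given a measured groupoid $\calg$ over a standard probability space $Y$ equipped with two action-type strict cocycles $\rho_1,\rho_2:\calg\to\Mod^0(V)$, where $\Mod^0(V)$ is the rotationless finite-index subgroup from Section~\ref{sec:rotationless}, there should exist a Borel map $\varphi:Y\to\Mod(V)$ such that $\rho_2(g)=\varphi(r(g))\rho_1(g)\varphi(s(g))^{-1}$ almost everywhere. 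Using the Korkmaz--Schleimer identification $\Mod(V)=\Aut(\mathbb{D})$ (where $\mathbb{D}$ is the disk graph), this reduces to producing a stably $(\calg,\rho_i)$-equivariant map $Y\to\Aut(\mathbb{D})$ that is equivariant for \emph{both} cocycles simultaneously, which amounts to characterizing the preimages under $\rho_i$ of meridian stabilizers by purely groupoid-theoretic properties that are intrinsic to $\calg$ and do not refer to either $\rho_i$.

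The heart of the proof is this intrinsic characterization, and I would begin with nonseparating meridians. Following Kida's template, I would consider pairs $(\calh,\cala)$ where $\calh$ is an everywhere-nonamenable subgroupoid of $\calg$ and $\cala\subseteq\calh$ is an infinite-type amenable subgroupoid that is stably normal in $\calh$, with $\calh$ maximal with this property. To rule out subsurface-stabilizer candidates arising from partial pseudo-Anosovs, I would additionally require that $\calh$ is not stably contained in a subgroupoid containing two commuting nonamenable normal subgroupoids (this is why separating meridians must be excluded at this stage, since their stabilizers do split in this way via their two sides). Using the structure of amenable subgroups of $\Mod^0(V)$ and the action-type hypothesis, this should force $(\calh,\cala)$ to correspond, via either $\rho_i$, to a pair $(X,\mathfrak{A})$ consisting of a multicurve $X$ and a collection $\mathfrak{A}$ of complementary ``active'' components, with $\calh$ coming from the stabilizer of $(X,\mathfrak{A})$ and $\cala$ from its active subgroup.

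The hardest remaining step is to distinguish, among the candidates with $\mathfrak{A}=\emptyset$, the nonseparating meridians from annulus pairs $\{\alpha_1,\alpha_2\}$; both give rise to stably normal amenable subgroupoids (twist subgroups and annulus-twist subgroups respectively). Pure amenability/normality considerations cannot separate these, so I would invoke the combinatorial argument sketched in the introduction: a nonseparating meridian extends to a collection of $3g-3$ pairwise disjoint nonseparating meridians forming a pants decomposition, whereas a maximal disjoint collection built from annulus pairs carries redundancy -- one annulus can always be removed without changing the link of the collection in the appropriate graph of disks and annuli. I would phrase this property groupoid-theoretically, by looking at how the candidate subgroupoid sits inside commuting families of analogous subgroupoids and whether the join of these subgroupoids changes when one member is dropped; this distinction passes through the cocycles, since both $\rho_i$ send commuting families to commuting families in $\Mod^0(V)$.

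Once nonseparating meridians are characterized, separating ones are recovered using Lemma~\ref{lem:meridian-stab-containment} together with the observation that any separating meridian can be completed to a pants decomposition by $3g-4$ nonseparating meridians, so its stabilizer subgroupoid is determined by its intersection pattern with the already-characterized nonseparating meridian stabilizers. Adjacency in $\mathbb{D}$ is then detected groupoid-theoretically by asking that the amenable stably normal subgroupoids associated to two meridian stabilizers generate an amenable subgroupoid, translating the fact that two meridians are disjoint if and only if the corresponding Dehn twists commute. Having intrinsically characterized both vertices and edges of $\mathbb{D}$, I would build the desired map $Y\to\Aut(\mathbb{D})$ by a standard measurable selection, uniqueness following from ICC-ness of $\Mod(V)$ proved in Lemma~\ref{lemma:icc}; this identifies $\rho_2$ with a $\varphi$-conjugate of $\rho_1$ and completes the proof. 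I expect the combinatorial step distinguishing meridians from annulus pairs to be the main obstacle, as it has no counterpart in the surface mapping class group story.
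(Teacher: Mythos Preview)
Your proposal is correct and follows essentially the same approach as the paper: reduce via ICC (Lemma~\ref{lemma:icc}) and the Furman--Kida machinery to the cocycle rigidity statement (Theorem~\ref{theo:main-2}), then characterize meridian-type subgroupoids intrinsically (via Property~$\qnsep$ and the $3g-3$ combinatorial argument of Lemma~\ref{lemma:recognize-annulus} for nonseparating ones, Property~$\psep$ for separating ones), detect adjacency via amenability of the generated subgroupoid (Proposition~\ref{prop:characterization-edges}), and conclude using Korkmaz--Schleimer. The only packaging difference is that the paper runs the $3g-3$ redundancy argument at the level of an injective graph map $\mathbb{D}^{\nsep}\to\mathbb{M}$ (Proposition~\ref{prop:characterize-nonseparating}) rather than as a direct groupoid-theoretic test on a single candidate subgroupoid, but the underlying combinatorics is exactly what you describe.
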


Recall that $\Mod^{\pm}(V)$ is ICC (Lemma~\ref{lemma:icc}), and that $\Mod^1(V)$ is the finite-index subgroup of $\Mod^{\pm}(V)$ introduced in Definition~\ref{de:mod1}. Thus, Theorem~\ref{theo:main} is a consequence of the following statement, combined with \cite[Theorem~4.5]{GH} (which builds on earlier works of Furman \cite{Fur2,Fur3} and Kida \cite{Kid}).

\begin{theo}\label{theo:main-2}
Let $V$ be a handlebody of genus at least $3$. Let $\calg$ be a measured groupoid over a standard probability space $Y$ (with source map $s$ and range map $r$), and let $\rho_1,\rho_2:\calg\to\Mod^1(V)$ be two strict action-type cocycles. 

Then there exist a Borel map $\theta:Y\to\Mod^{\pm}(V)$ and a conull Borel subset $Y^*\subseteq Y$ such that for all $g\in\calg_{|Y^*}$, one has $\rho_1(g)=\theta(r(g))^{-1}\rho_2(g)\theta(s(g))$.
\end{theo}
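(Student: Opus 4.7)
The plan is to adapt Kida's strategy from the surface mapping class group setting \cite{Kid}, replacing the curve graph by the disk graph $\mathbb{D}$ of $V$. The target is a stably $(\calg,\rho_1)$-equivariant Borel map $\Theta:Y\to\Aut(\mathbb{D})$ whose pointwise action conjugates $\rho_1$ to $\rho_2$; using the Korkmaz--Schleimer isomorphism $\Mod(V)\cong\Aut(\mathbb{D})$ \cite{KS}, this immediately produces the desired Borel map $\theta:Y\to\Mod(V)$. To build $\Theta$, I will characterize subgroupoids of $\calg$ which arise as $(\calg,\rho_i)$-stabilizers of stably equivariant Borel maps $Y\to\mathbb{D}$ in a way that makes no reference to the cocycle, and then characterize adjacency in $\mathbb{D}$ similarly; these two characterizations will transfer matchings of stabilizers between $\rho_1$ and $\rho_2$ into a graph automorphism of $\mathbb{D}$ over almost every point $y$.

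The first and hardest task is to pin down stabilizers of nonseparating meridians. I will begin by considering pairs $(\calh,\cala)$ where $\calh$ is an everywhere nonamenable measured subgroupoid of $\calg$ and $\cala$ is an amenable, infinite-type, stably normal subgroupoid of $\calh$. Lemma~\ref{lemma:pA-in-complement}, together with the action-type hypothesis on $\rho_i$, guarantees that $(\calg,\rho_i)$-stabilizers of nonseparating meridians give such pairs (with $\cala$ coming from the associated twist). The first obstruction is that $\calh$ might instead be the normalizer of a partial pseudo-Anosov subgroup supported on a subsurface $S\subsetneq\partial V$ whose boundary multitwist group intersects $\Mod^0(V)$ trivially; in such cases there is no larger subgroupoid in Kida's sense. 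I will exclude this by requiring, in the groupoid-theoretic characterization, that $\calh$ is not stably contained in any subgroupoid admitting two stably normal everywhere nonamenable subgroupoids centralizing each other (a groupoid analog of having nonamenable stabilizers of complementary subsurfaces). A classification argument then forces the remaining pairs $(\calh,\cala)$ to stably come from a pair $(X,\mathfrak{A})$ of a multicurve with a set of active components, for which $\cala$ is the active subgroup; amenability of $\cala$ then reduces the options to a single nonseparating meridian (with $\mathfrak{A}=\emptyset$) or a single annulus pair.

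The main obstacle I anticipate is distinguishing nonseparating meridians from annulus pairs at the purely groupoid-theoretic level. Here I will exploit the combinatorial fact that nonseparating meridians extend to a cut system of $3g-3$ pairwise disjoint, pairwise nonisotopic meridians, while maximal collections of annulus pairs with pairwise commuting twists are necessarily redundant (one of the annulus curves can be removed without changing the link in an appropriate graph of disks and annuli). Translated through the twist cocycle, this becomes a statement about which chains of pairs $(\calh_i,\cala_i)$ with pairwise commuting $\cala_i$'s can be formed inside $\calg$, distinguishing the two cases and yielding the sought characterization of nonseparating meridian stabilizers.

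Once nonseparating meridians are recognized, separating meridian stabilizers are recovered using Lemma~\ref{lem:meridian-stab-containment} and the fact that a separating meridian completes to a pants decomposition by $3g-4$ nonseparating meridians; in groupoid terms, the stabilizer is the appropriate intersection of nonseparating meridian stabilizers. Adjacency in $\mathbb{D}$ is detected by the observation that two Dehn twists commute, equivalently generate an amenable subgroup, precisely when the meridians are disjoint; this passes to groupoids via the amenability/stability properties recalled in Section~3. Putting everything together, the groupoid-theoretic characterizations match stabilizers (resp.\ adjacencies) for $\rho_1$ with those for $\rho_2$, and for $\mu$-almost every $y\in Y$ this produces a bijection $V(\mathbb{D})\to V(\mathbb{D})$ preserving edges, hence an element $\Theta(y)\in\Aut(\mathbb{D})=\Mod(V)$. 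Measurability of $\Theta$ and the cocycle equivariance will be verified by the standard Borel-selection arguments from \cite[Section~4]{Kid} and their groupoid-theoretic adaptation in \cite{GH}, producing the desired $\theta$ and conull set $Y^*$.
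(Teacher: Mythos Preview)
Your overall strategy matches the paper's closely, including the key ideas: the product-like obstruction to exclude separating meridians at the first stage, the reduction to admissible decorated multicurves $(X,\mathfrak{A})$, the combinatorial $3g-3$ argument to single out nonseparating meridians, and the final appeal to Korkmaz--Schleimer. Two points in your sketch are oversimplified in ways that would become genuine gaps if not addressed.

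First, the assertion that ``amenability of $\cala$ then reduces the options to a single nonseparating meridian (with $\mathfrak{A}=\emptyset$) or a single annulus pair'' is not correct. There are many more admissible decorated multicurves: $X$ can have several components, and $\mathfrak{A}$ can be nonempty provided the active subgroup stays amenable (e.g.\ a pair-of-pants component). The paper does not reduce to a dichotomy at this stage; instead it packages all such possibilities into a graph $\mathbb{M}$ of clean admissible decorated multicurves and proves (Lemma~\ref{lemma:recognize-annulus}) that any injective graph map $\mathbb{D}^{\nsep}\to\mathbb{M}$ lands in $\mathbb{D}^{\nsep}$. Your $3g-3$ link argument is the right idea, but it must be run against this larger target, not merely against annulus pairs.

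Second, your treatment of separating meridians is not right as stated. The stabilizer of a separating meridian $c$ is not an intersection of nonseparating-meridian stabilizers; that intersection would be the stabilizer of the full multicurve $\{d_1,\dots,d_{3g-4}\}$, which is strictly smaller. The paper's characterization (Property~$\psep$ and Proposition~\ref{prop:characterise-separating}) is more delicate: one needs a stably maximal subgroupoid $\calh$ admitting a normal amenable $\calb$ whose canonical reduction multicurve is forced, via the $3g-4$ nonseparating twist subgroupoids being \emph{contained in} $\calh$, to be a single separating meridian. In particular Lemma~\ref{lem:meridian-stab-containment} is used to handle the genus-$1$ side, and a separate argument (Property~$\psep(2)$ together with Lemma~\ref{lemma:crs-for-special-meridian}) is needed to rule out the case where $X$ is a single nonseparating meridian. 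You should expect to need a maximality clause, not an intersection description.
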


\begin{rk}\label{rk:commensurator}
The case where $Y$ is reduced to a point is already relevant: if $f$ is an automorphism of $\Mod^1(V)$, then the group $\Mod^1(V)$, viewed as a groupoid over a point, comes equipped with two (action-type) cocycles towards $\Mod^1(V)$, given by the identity and $f$. The conclusion in this case is that every automorphism of $\Mod^1(V)$ is a conjugation inside $\Mod^{\pm}(V)$. 

More generally, our work recovers the commensurator rigidity theorem from \cite[Corollary~1.3]{Hen}. Indeed, let $\Gamma_1$ and $\Gamma_2$ be two finite-index subgroups of $\Mod^{\pm}(V)$, and let $f:\Gamma_1\to\Gamma_2$ be an isomorphism. Let $\Gamma'_1\subseteq\Gamma_1$ and $\Gamma'_2\subseteq\Gamma_2$ be finite-index subgroups that are both contained in $\Mod^1(V)$, and such that $f(\Gamma'_1)=\Gamma'_2$. Then $\Gamma'_1$, viewed as a groupoid over a point, comes equipped with two action-type cocycles towards $\Mod^1(V)$, one ranging in $\Gamma'_1$ (given by the identity), and one ranging in $\Gamma'_2$ (given by $f$). Theorem~\ref{theo:main-2} implies that $f_{|\Gamma'_1}$ coincides with the conjugation by an element of $\Mod^{\pm}(V)$. Consequently, the natural map from $\Mod^{\pm}(V)$ to its abstract commensurator is surjective. It is in fact an isomorphism, using that $\Mod^{\pm}(V)$ is ICC for its injectivity.  
\end{rk}

\begin{rk}\label{rk:rotationless-cocycle}
The reason why we are working with cocycles towards the finite-index subgroup $\Mod^1(V)$ from Definition~\ref{de:mod1} is the following. At various places in the proof, we will need to consider subgroupoids that stabilize (in an appropriate sense) a subsurface $\Sigma\subseteq\partial V$, and consider the cocycle to $\Mod(\Sigma)$ obtained by restriction. Lemma~\ref{lem:Mod1} ensures that this restriction cocycle takes its values in $\Mod^0(\Sigma)$, and therefore its image consists of rotationless mapping classes, which is often useful. Arguing in a slightly different way, we could also have avoided the use of $\Mod^1(V)$, and instead impose that the cocycles are \emph{rotationless}, i.e.\ only take rotationless mapping classes as values (which happens for example for cocycles with values in $\Mod^0(\partial V)$). Since the restriction of a rotationless mapping class to a subsurface it preserves is again rotationless, this would have been enough for this purpose. 
\end{rk}

The rest of the section is devoted to the proof of Theorem~\ref{theo:main-2}. Starting from a measured groupoid $\calg$ with two action-type cocycles $\rho_1,\rho_2$ towards $\Mod^1(V)$, we ultimately aim to show that subgroupoids of $\calg$ corresponding to meridian stabilizers for $\rho_1$ - in the precise sense that they are \emph{of meridian type} as in Definition~\ref{de:type} below - are also of meridian type with respect to $\rho_2$. Additionally, we will prove that the property that two subgroupoids stabilize disjoint meridians is also independent of the action-type cocycle we choose. This will be used to build a canonical map $\theta$ from the base space $Y$ of the groupoid $\calg$ to the group of all automorphisms of the disk graph. We will finally appeal to the theorem of Korkmaz and Schleimer \cite{KS} saying that the automorphism group of the disk graph is precisely $\Mod^{\pm}(V)$ to conclude. We make the following definition.

\begin{de}[Subgroupoids of meridian type]\label{de:type}
Let $\calg$ be a measured groupoid over a standard probability space $Y$, and let $\rho:\calg\to\Mod^1(V)$ be a strict cocycle. A measured subgroupoid $\calh$ of $\calg$ is \emph{of meridian type} with respect to $\rho$ if there exists a conull Borel subset $Y^*\subseteq Y$ and a partition $Y^*=\dunion_{i\in I}Y_i$ into at most countably many Borel subsets such that for every $i\in I$, the groupoid $\calh_{|Y_i}$ is equal to the $(\calg_{|Y_i},\rho)$-stabilizer of the isotopy class of a meridian $c_i$. 
\end{de}

When $\calh$ can be written as in Definition~\ref{de:type}, we say that the map $\varphi$ sending every $y\in Y_i$ to the isotopy class of the meridian $c_i$ is a \emph{meridian map} for $(\calh,\rho)$. The essential uniqueness of this map (i.e.\ the fact that, up to measure $0$, it does not depend on the choice of a partition and meridians $c_i$ as above) will follow from Lemmas~\ref{lemma:uniqueness} and~\ref{lemma:uniqueness-disk}.

Likewise, we define the notions of subgroupoids \emph{of nonseparating-meridian type}, and \emph{of separating-meridian type}, by respectively requiring $c_i$ to be nonseparating, or separating. Before completing our characterisation of subgroupoids of meridian type in Proposition~\ref{prop:meridian-type}, we will go  through successive characterisations of subgroupoids of nonseparating-meridian type (Section~\ref{sec:characterization-nonseparating}) and of separating-meridian type (Section~\ref{sec:characterization-separating}).

\subsection{Groupoids with cocycles to a free group, after Adams, Kida}

Throughout the paper, we will work with the following definition.

\begin{de}[Strongly Schottky pairs of subgroupoids]\label{de:strong-schottky}
Let $\calg$ be a measured groupoid over a standard probability space $Y$. A \emph{strongly Schottky pair of subgroupoids} of $\calg$ is a pair $(\cala^1,\cala^2)$ of amenable subgroupoids of $\calg$ of infinite type such that for every Borel subset $U\subseteq Y$ of positive measure, there exists a Borel subset $U'\subseteq U$ of positive measure such that every normal amenable subgroupoid of $\langle \cala^1_{|U'},\cala^2_{|U'}\rangle$ is stably trivial.
\end{de}

We observe that this notion is stable under restrictions: if $(\cala^1,\cala^2)$ is a strongly Schottky pair of subgroupoids of $\calg$, then for every Borel subset $U\subseteq Y$ of positive measure, the pair $(\cala^1_{|U},\cala^2_{|U})$ is a strongly Schottky pair of subgroupoids of $\calg_{|U}$. In addition, this notion is stable under stabilization: given a pair $(\cala^1,\cala^2)$ of subgroupoids of $\calg$, and a partition $Y=\dunion_{i\in I}Y_i$ into at most countably many Borel subsets, if $(\cala^1_{|Y_i},\cala^2_{|Y_i})$ is a strongly Schottky pair of subgroupoids of $\calg_{|Y_i}$ for every $i\in I$, then $(\cala^1,\cala^2)$ is a strongly Schottky pair of subgroupoids of $\calg$.

Notice that the last conclusion implies in particular that $\langle\cala^1_{|U'},\cala^2_{|U'}\rangle$ is nonamenable. So the existence of a strongly Schottky pair of subgroupoids of $\calg$ forces $\calg$ to be everywhere nonamenable.

Definition~\ref{de:strong-schottky} is a strengthening of the notion of a \emph{Schottky pair of subgroupoids} from \cite[Definition~13.1]{GH}, which only required the groupoid $\langle \cala^1_{|U},\cala^2_{|U}\rangle$ to be nonamenable. The following lemma is a variation over arguments of Adams \cite[Section~3]{Ada} and Kida \cite[Lemma~3.20]{Kid}, and gives the main example of a strongly Schottky pair of subgroupoids. 

\begin{lemma}\label{lemma:free}
Let $G$ be a countable group, and let $g,h\in G$ be two elements that generate a nonabelian free subgroup $F$ of $G$. Let $\calg$ be a measured groupoid over a standard probability space $Y$, equipped with a strict action-type cocycle $\rho:\calg\to G$.

Then $(\rho^{-1}(\langle g\rangle),\rho^{-1}(\langle h\rangle))$ is a strongly Schottky pair of subgroupoids of $\calg$ (in particular $\calg$ is everywhere nonamenable).

Moreover, for every positive measure subset $U\subseteq Y$, every normal amenable subgroupoid of $\rho^{-1}(F)_{|U}$ is stably trivial.
\end{lemma}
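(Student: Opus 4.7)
The amenability of $\cala^1:=\rho^{-1}(\langle g\rangle)$ and $\cala^2:=\rho^{-1}(\langle h\rangle)$ is immediate from the fact quoted in the preceding section that the $\rho$-preimage of an amenable subgroup of $G$ is amenable whenever $\rho$ has trivial kernel (applied to the infinite cyclic, hence amenable, subgroups $\langle g\rangle$ and $\langle h\rangle$). The property that they are of infinite type is built into the definition of an action-type cocycle, applied to the same two infinite subgroups. Both properties are stable under Borel restrictions of $Y$, so the substantive content is the normal-amenable clause in Definition~\ref{de:strong-schottky}.

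My plan for this part is to adapt the arguments of Adams \cite{Ada} and Kida \cite[Lemma~3.20]{Kid}. Fix a Borel subset $U\subseteq Y$ of positive measure; I will in fact aim to prove that $V:=U$ works, i.e.\ every normal amenable subgroupoid $\calk$ of $\calg':=\langle\cala^1_{|V},\cala^2_{|V}\rangle$ is stably trivial. Set $F:=\langle g,h\rangle\subseteq G$, a nonabelian free group, and observe that $\rho$ restricted to $\calg'$ takes values in $F$; in particular $\rho|_{\calk}:\calk\to F$. Let $\partial F$ denote the Gromov boundary of $F$, a compact metrizable space on which $F$ acts by homeomorphisms of convergence type (each nontrivial element being loxodromic with exactly two fixed points). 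By amenability of $\calk$ together with the quoted existence of equivariant maps to $\Prob$ of compact $F$-spaces, there exists a $(\calk,\rho)$-equivariant Borel map $\psi:V\to\Prob(\partial F)$.

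The heart of the argument is to combine the $\calk$-equivariance of $\psi$ with the normality of $\calk$ in $\calg'$ and the dynamics of $F$ on $\partial F$. Cover $\calg'$ by countably many bisections $B_n$ on each of which $\rho$ is constantly equal to a power of $g$ or of $h$, and with respect to which $\calk$ is $B_n$-invariant. For such a $B_n$ with $\rho\equiv f\in F$, and for $b,b'\in B_n$ respectively above $y$ and $y'$, the $B_n$-invariance relation $b'kb^{-1}\in\calk$ for $k\in\calk$ from $y$ to $y'$, combined with the $\calk$-equivariance of $\psi$, imposes that $f\rho(k)f^{-1}$ sends $\psi(r(b))$ to $\psi(r(b'))$. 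Running over infinitely many bisections above a generic $y$ (available because $\cala^1$ and $\cala^2$ are of infinite type) forces the fiberwise stabilizer of $\psi$ in $F$ to contain infinitely many $F$-conjugates of $\rho|_{\calk}$, and a standard ping-pong with sufficiently high powers of $g$ and $h$ produces among these conjugates elements with pairwise disjoint fixed-point pairs on $\partial F$. Since no probability measure on $\partial F$ is fixed by such a nonelementary subgroup of $F$, this forces $\rho|_{\calk}$ to vanish on a conull portion of $\calk$; triviality of the kernel of $\rho$ then yields that $\calk$ is stably equal to the unit subgroupoid. Finally, the everywhere-nonamenability of $\calg$ follows automatically: if $\calg_{|U}$ were amenable for some positive-measure $U$, then the infinite-type subgroupoid $\calg'$ would be a (trivially) normal amenable subgroupoid of itself, contradicting the strong Schottky property just established.

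The main obstacle will be to make rigorous, in the measurable setting, the combination of normality, $\calk$-equivariance, and ping-pong dynamics that promotes fiberwise invariance of $\psi(y)$ under a nonelementary subgroup of $F$ into the vanishing of $\rho|_{\calk}$; this is the step where the measurable choice of bisections and the pointwise realization of disjoint fixed-point pairs on $\partial F$ have to be coordinated, along the lines of \cite[Section~3]{Ada} and \cite[Lemma~3.20]{Kid}.
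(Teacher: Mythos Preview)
Your first two paragraphs are fine and match the paper: amenability and infinite type of $\cala^1,\cala^2$ follow exactly as you say, and the reduction to showing that every normal amenable subgroupoid $\calk$ of $\calg'=\langle\cala^1_{|U},\cala^2_{|U}\rangle$ is stably trivial (with $V=U$) is the right target. The existence of a $(\calk,\rho)$-equivariant map $\psi:U\to\Prob(\partial F)$ is also correctly invoked.

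The gap is in your third paragraph. The bisection computation you record is correct: if $B_n$ has $\rho\equiv f$ and $b,b'\in B_n$ lie over $y,y'$ with $k\in\calk$ from $y$ to $y'$, then $f\rho(k)f^{-1}\cdot\psi(r(b))=\psi(r(b'))$. But this relates the values of $\psi$ at the \emph{ranges} $r(b),r(b')$, which are different points from $y,y'$ and from each other; it does not say that the stabilizer of $\psi(y)$ at any fixed $y$ contains conjugates of $\rho(\calk)$. Normality lets you transport $\calk$-arrows along $\calg'$-bisections, but since $\psi$ is only $\calk$-equivariant (not $\calg'$-equivariant) and can vary from fiber to fiber, no single $\psi(y)$ is forced to be invariant under a nonelementary subgroup. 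So the step ``forces the fiberwise stabilizer of $\psi$ in $F$ to contain infinitely many $F$-conjugates of $\rho|_{\calk}$'' is not justified, and I do not see how to repair it without an additional idea.

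What is missing is precisely a device that promotes some $\calk$-equivariant object to a $\calg'$-equivariant one. The paper supplies this in two cases according to the size of the support of $\psi(y)$. On the (maximal) subset where one can arrange a $(\calk,\rho)$-equivariant $\psi$ with support of cardinality $\ge 3$, one pushes forward via the barycenter map $(\partial F)^{(3)}\to V(T)$ (for $T$ the Cayley tree of $F$) and then to finite subsets of $V(T)$; the target is countable, so $\psi$ is constant on a positive measure piece, and triviality of finite-set stabilizers in $F$ together with $\ker\rho=\{e\}$ gives stable triviality of $\calk$ there. On the complementary set, one invokes Adams' \emph{maximal} $(\calk,\rho)$-equivariant map $\theta_{\max}:U_2\to\calp_{\le 2}(\partial F)$; being canonical, $\theta_{\max}$ is automatically equivariant under the normalizer $\calg'$, and comparing it with the corresponding maximal maps for $\rho^{-1}(\langle g\rangle)$ and $\rho^{-1}(\langle h\rangle)$ (which are the constants $\{g^{\pm\infty}\}$ and $\{h^{\pm\infty}\}$) yields a contradiction. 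Your sketch bypasses both mechanisms, and that is where it breaks.
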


In the following proof, whenever $\Delta$ is a Polish space, the set $\Prob(\Delta)$ of all Borel probability measures on $\Delta$ is equipped with the topology generated by the maps $\mu\mapsto\int_X fd\mu$, where $f$ varies over the set of all real-valued bounded continuous functions. When $\Delta$ is compact, this is nothing but the weak-$\ast$ topology coming from the duality given by the Riesz--Markov--Kakutani theorem. When $\Delta$ is a countable discrete space, this is nothing but the topology of pointwise convergence. The reader is refered to \cite[Section~17.E]{Kec} for more information and basic facts regarding the Borel structure on $\Prob(\Delta)$ which justify the measurability of all maps in the following proof. 

\begin{proof}[{Proof of Lemma~\ref{lemma:free}}]
As $\langle g\rangle$ and $\langle h\rangle$ are amenable subgroups of $G$ and $\rho$ has trivial kernel, the subgroupoids $\rho^{-1}(\langle g\rangle)$ and $\rho^{-1}(\langle h\rangle)$ are amenable, see \cite[Corollary~3.39]{GH}. As $\langle g\rangle$ and $\langle h\rangle$ are infinite and $\rho$ is action-type, the subgroupoids $\rho^{-1}(\langle g\rangle)$ and $\rho^{-1}(\langle h\rangle)$ are of infinite type.

Now it is enough to prove that if $U\subseteq Y$ is a Borel subset of positive measure, and $\cala$ is a normal amenable subgroupoid of either $\langle \rho^{-1}(\langle g\rangle)_{|U},\rho^{-1}(\langle h\rangle)_{|U}\rangle$ or of $\rho^{-1}(F)_{|U}$, then $\cala$ is stably trivial.

Let $T$ be the Cayley tree of the free group $F=\langle g,h\rangle$, with respect to the generating set $\{g,h\}$. The $F$-action on $T$ by isometries extends to an $F$-action on $\partial_\infty T$ by homeomorphisms. As $\cala$ is amenable, and as $\rho(\cala)$ is contained in the group $F$ which acts by homeomorphisms on the compact metrizable space $\partial_\infty T$, we can apply \cite[Proposition~4.14]{Kid-survey} and get an $(\cala,\rho)$-equivariant Borel map $U\to \Prob(\partial_\infty T)$.

Let $\mathfrak{F}$ be the set of all Borel subsets $W\subseteq U$ such that there exists a Borel map $\mu:W\to\Prob(\partial_\infty T)$ which is stably $(\cala_{|W},\rho)$-equivariant and such that for every $y\in W$, the support of the measure $\mu(y)$ has cardinality at least $3$. The set $\mathfrak{F}$ is stable under countable unions. Therefore, by Lemma~\ref{lemma:zorn}, it admits a maximal element $U_1$ (in the sense that every $W\in\mathfrak{F}$ has a conull Borel subset contained in $U_1$).


Let us now fix a Borel subset $U_1\subseteq U$ as above. We first prove that $\cala_{|U_1}$ is stably trivial. Up to partitioning $U_1$ into at most countably many Borel subsets, we can assume that the map $\mu_1$ is $(\cala_{|U_1},\rho)$-equivariant (and not just stably equivariant). For every $y\in U_1$, the probability measure $\mu(y)\otimes\mu(y)\otimes\mu(y)$ on $(\partial_\infty T)^3$ gives positive measure to the $F$-invariant subset $(\partial_\infty T)^{(3)}$ made of pairwise distinct triples. Thus, after restricting this measure to $(\partial_\infty T)^{(3)}$ and renormalizing to turn this restricted measure into a probability measure, we get an $(\cala_{|U_1},\rho)$-equivariant Borel map $U_1\to\Prob((\partial_\infty T)^{(3)})$. 

Now, denoting by $V(T)$ the vertex set of $T$, there is a natural $F$-equivariant \emph{barycenter} map $(\partial_\infty T)^{(3)}\to V(T)$. Indeed, given $(\xi_1,\xi_2,\xi_3)\in (\partial_\infty T)^{(3)}$, the three geodesic lines $\ell_1,\ell_2,\ell_3$ (where $\ell_i$ joins $\xi_{i-1}$ to $\xi_{i+1}$, with indices considered modulo $3$) meet in a single vertex of $T$. This vertex is the barycenter of $(\xi_1,\xi_2,\xi_3)$. It depends continuously on $(\xi_1,\xi_2,\xi_3)$, being in fact locally constant.

By pushing the probability measures on $(\partial_\infty T)^{(3)}$ through this barycenter map, we get an $(\cala_{|U_1},\rho)$-equivariant Borel map $U_1\to\Prob(V(T))$. Let $\calp_{<\infty}(V(T))$ be the set of all nonempty finite subsets of $V(T)$. As $V(T)$ is countable, there is also a natural $F$-equivariant Borel map $\Prob(V(T))\to\calp_{<\infty}(V(T))$, sending a probability measure $\nu$ to the finite subset of $V(T)$ made of all vertices that have maximal $\nu$-measure. We thus derive an $(\cala_{|U_1},\rho)$-equivariant Borel map $\phi:U_1\to\calp_{<\infty}(V(T))$. As $\calp_{<\infty}(V(T))$ is countable, we can then find a Borel partition $U_1=\dunion_{i\in I}U_{1,i}$ into at most countably many Borel subsets such that for every $i\in I$, the map $\phi_{|U_{1,i}}$ is constant, with value a nonempty finite set $\calf_i$ of vertices of $T$. In other words, there exists a conull Borel subset $U_{1,i}^*\subseteq U_{1,i}$ such that $\rho(\cala_{|U_{1,i}^*})$ is contained in the $F$-stabilizer of $\calf_i$. As this stabilizer is trivial and $\rho$ has trivial kernel, it follows that $\cala_{|U_1}$ is stably trivial.

We will now prove that $U_2=U\setminus U_1$ is a null set, which will conclude the proof of the lemma. So assume towards a contradiction that $U_2$ has positive measure. We know that there exists an $(\cala_{|U_2},\rho)$-equivariant Borel map $\mu:U_2\to\Prob(\partial_\infty T)$, and that for every such map and almost every $y\in U_2$, the support of $\mu(y)$ has cardinality at most $2$. Let $\calp_{\le 2}(\partial_\infty T)$ be the set of all nonempty subsets of $\partial_\infty T$ of cardinality at most $2$. As in \cite[Lemma~3.2]{Ada}, we can thus find an $(\cala_{|U_2},\rho)$-equivariant Borel map $\theta_{\max}:U_2\to\calp_{\le 2}(\partial_\infty T)$ which is maximal in the sense that for every other $(\cala_{|U_2},\rho)$-equivariant Borel map $\theta:U_2\to\calp_{\le 2}(\partial_\infty T)$ and a.e.\ $y\in Y$, one has $\theta(y)\subseteq\theta_{\max}(y)$. Being canonical, the map $\theta_{\max}$ is then equivariant under the groupoid $\langle \rho^{-1}(\langle g\rangle)_{|U_2},\rho^{-1}(\langle h\rangle)_{|U_2}\rangle$ which normalizes $\cala_{|U_2}$ (compare also with the proof of Lemma~\ref{lemma:crs-normal} below where a similar argument is detailed). Recall that the groupoid $\rho^{-1}(\langle g\rangle)_{|U_2}$ is amenable and of infinite type. Therefore, repeating the argument from the present proof shows that there exists a maximal $(\rho^{-1}(\langle g\rangle)_{|U_2},\rho)$-equivariant Borel map $U_2\to\calp_{\le 2}(\partial_\infty T)$, and this must then be the constant map with value $\{g^{-\infty},g^{+\infty}\}$. Likewise, the constant map with value $\{h^{-\infty},h^{+\infty}\}$ is the maximal $(\rho^{-1}(\langle h\rangle)_{|U_2},\rho)$-equivariant Borel map $U_2\to\calp_{\le 2}(\partial_\infty T)$. As $\{g^{-\infty},g^{+\infty}\}\cap\{h^{-\infty},h^{+\infty}\}=\emptyset$, we have reached a contradiction. This completes our proof. 
\end{proof}

\subsection{Canonical reduction sets, after Kida}
In this section, we review work of Kida \cite[Chapter~4]{Kid-memoir} regarding groupoids with cocycles towards a surface mapping class group. Since our terminology slightly differs from Kida's, we recall proofs for the convenience of the reader. We will introduce a notion of canonical reduction multicurve for a groupoid equipped with a cocycle towards a surface mapping class group, generalizing the classical notion for subgroups of $\Mod(\Sigma)$ that we recalled in Section~\ref{sec:bg}. 

We also mention that the results in this section can also be viewed as a special case of those in \cite[Section~3.6]{HH2}, applied by taking for $\mathbb{P}$ the set of all elementwise stabilizers of collections of curves on the surface, but we believe it is useful to have the arguments specified in our context. In the whole section, we let $\Sigma$ be a (possibly disconnected) orientable surface of finite type, i.e.\ $\Sigma$ is obtained from the disjoint union of finitely many closed connected orientable surfaces by removing at most finitely many points. Recall that $\Mod^0(\Sigma)$ is the group of all isotopy classes of orientation-preserving diffeomorphisms of $\Sigma$ that do not permute the connected components of $\Sigma$, and act trivially on the homology mod $3$ of each connected component; in other words $\Mod^0(\Sigma)=\Mod^0(\Sigma_1)\times\dots\times\Mod^0(\Sigma_k)$, where $\Sigma_1,\dots,\Sigma_k$ are the connected components of $\Sigma$. 

\begin{de}[Irreducibility]
Let $\calg$ be a measured groupoid over a standard probability space $Y$, equipped with a strict cocycle $\rho:\calg\to\Mod^0(\Sigma)$. 

We say that $(\calg,\rho)$ is \emph{reducible} if there exist a Borel subset $U\subseteq Y$ of positive measure and an essential simple closed curve $c$ on $\Sigma$ such that the isotopy class of $c$ is $(\calg_{|U},\rho)$-invariant. 

Otherwise, we say that $(\calg,\rho)$ is \emph{irreducible}.
\end{de}

\begin{de}[Canonical reduction set]
Let $\calg$ be a measured groupoid over a standard probability space $Y$, equipped with a strict cocycle $\rho:\calg\to\Mod^0(\Sigma)$. A (possibly infinite) set $\calc$ of isotopy classes of essential simple closed curves on $\Sigma$ is a \emph{canonical reduction set} for $(\calg,\rho)$ if
\begin{enumerate}
\item every $c\in\calc$ is $(\calg,\rho)$-invariant, and
\item for every Borel subset $U\subseteq Y$ of positive measure, every isotopy class $c'$ of essential simple closed curves which is $(\calg_{|U},\rho)$-invariant belongs to $\calc$.
\end{enumerate}
\end{de}

Note that $(\calg,\rho)$ is irreducible if and only if $\emptyset$ is a canonical reduction set for $\calg$. Notice also that if a canonical reduction set for $(\calg,\rho)$ exists, then it is unique (because it is the set of all $(\calg,\rho)$-invariant isotopy classes of essential simple closed curves). We also observe that if $\calc$ is a canonical reduction set for $(\calg,\rho)$, then for every positive measure Borel subset $U\subseteq X$, the set $\calc$ is also a canonical reduction set for $(\calg_{|U},\rho)$. The latter observation will often allow us to restrict to a positive measure Borel subset of the base space, without having to worry about changing the canonical reduction set of the groupoid (and cocycle) under consideration.

Lemma~\ref{lemma:crs} below shows that up to a countable Borel
partition of the base space, canonical reduction sets always
exist. For this, we need two lemmas.

The first is immediate from Lemma~\ref{lem:fixing-filling}, noting that if
an element of $\Mod^1(\Sigma)$ preserves the subsurface $S_\calc$,
then the restriction lies in $\Mod^0(S_\calc)$ by
Lemma~\ref{lem:Mod1}.
\begin{lemma}\label{lem:stab-filled-subsurface}
  Let $\Sigma$ be a (possibly disconnected) surface of finite type, let $\calc$ be a set of isotopy classes of essential simple closed curves, and let $S_\calc$ be the subsurface filled by $\calc$ (compare Lemma~\ref{lem:subsurface-filled}).

Then the elementwise stabilizer of $\calc$ in $\Mod^1(\Sigma)$ is the subgroup of $\Mod^1(\Sigma)$ consisting of all mapping classes that have a representative supported on the complement of $S$.
\end{lemma}

The second is an immediate consequence of Lemma~\ref{lem:stab-filled-subsurface} and Lemma~\ref{lem:subsurface-chains}:
\begin{cor}\label{cor:stabchainbound}
Let $\Sigma$ be a (possibly disconnected) surface of finite type. There is a bound on the size $k$ of a chain $\calc_1\subseteq\cdots\subseteq\calc_k$ of sets of isotopy classes of essential simple closed curves on $\Sigma$ such that, for every $i\in\{1,\dots,k-1\}$, the elementwise stabilizer of $\calc_{i+1}$ in $\Mod^0(\Sigma)$ is a proper subgroup of the elementwise stabilizer of $\calc_i$. 
\end{cor}

\begin{lemma}\label{lemma:crs}
Let $\calg$ be a measured groupoid over a standard probability space $Y$, equipped with a strict cocycle $\rho:\calg\to\Mod^0(\Sigma)$. 

Then there exist a partition $Y=\dunion_{i\in I} Y_i$ into at most countably many Borel subsets such that for every $i\in I$, $(\calg_{|Y_i},\rho)$ has a canonical reduction set.
\end{lemma}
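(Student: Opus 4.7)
My plan is to leverage the countability of the set $\mathcal{C}$ of isotopy classes of essential simple closed curves on $\Sigma$, and to build the partition by tracking, for each such curve, the Borel subset of $Y$ on which it becomes invariant. First, I would enumerate $\mathcal{C} = \{c_1, c_2, \ldots\}$ and associate to each $c \in \mathcal{C}$ an essentially unique Borel subset $A_c \subseteq Y$ that is maximal for $c$-invariance, in the sense that $c$ is $(\calg_{|A_c}, \rho)$-invariant while no positive-measure Borel subset of $Y \setminus A_c$ has $c$ as an invariant curve.

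The construction of $A_c$ will be the main technical step. The family $\mathcal{F}_c$ of Borel subsets on which $c$ becomes invariant is closed under taking Borel subsets and under countable nested unions (since an arrow with both endpoints in $\bigcup_n U_n$ already lies in some $\calg_{|U_N}$), but it fails to be closed under arbitrary finite unions: if $g$ is an arrow of $\calg$ from $U_1 \setminus U_2$ to $U_2 \setminus U_1$, then $\rho(g)$ need not lie in $\Stab(c)$ even when $U_1, U_2 \in \mathcal{F}_c$. I therefore plan to construct $A_c$ by a transfinite exhaustion, starting from any element of $\mathcal{F}_c$ and at each successor ordinal enlarging the current set by a positive-measure Borel subset after excising the endpoints of ``bad'' cross-arrows whose $\rho$-image leaves $\Stab(c)$, and taking suitable unions at limit ordinals. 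The finiteness of $\mu(Y)$ will force the procedure to stabilize at a countable ordinal, and a comparison argument will give essential uniqueness of $A_c$.

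Next, I would partition $Y$ by the Borel map $\Phi \colon Y \to 2^{\mathcal{C}}$, $y \mapsto \{c \in \mathcal{C} : y \in A_c\}$, taking as the pieces the positive-measure fibers of $\Phi$ (absorbing the union of null-measure fibers into a single negligible Borel piece). On each positive-measure fiber $Y_i$, the set $\calc_i = \Phi(y)$ for any $y \in Y_i$ will serve as the canonical reduction set for $(\calg_{|Y_i}, \rho)$. Condition (1) of the definition holds because $Y_i \subseteq A_c$ and hence $\calg_{|Y_i} \subseteq \calg_{|A_c}$ for every $c \in \calc_i$. Condition (2) will follow from the maximality of $A_{c'}$: any positive-measure Borel $U \subseteq Y_i$ on which a curve $c'$ becomes invariant must be contained in $A_{c'}$, so that $Y_i \cap A_{c'}$ has positive measure, and then $Y_i \subseteq A_{c'}$ up to null by the structure of the fibers, yielding $c' \in \calc_i$.

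The countability of the resulting partition is the second obstacle, and I expect it to follow from the observation that the sets $A_c$ transform equivariantly through $\rho$ under the arrows of $\calg$, which takes values in the countable group $\Mod^0(\Sigma)$: the fibers of $\Phi$ are saturated under the orbit equivalence relation of $\calg$, and the quotient structure combined with the finiteness of $\mu(Y)$ limits the positive-measure fibers to a countable family. The hardest step overall will be the construction of the maximal sets $A_c$, since the failure of closure of $\mathcal{F}_c$ under finite unions is what makes the argument nontrivial; the remaining steps are essentially formal verifications.
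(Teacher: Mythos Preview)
Your approach has a genuine gap at the very first step: the sets $A_c$ with the two properties you describe need not exist. Consider the groupoid $\calg$ over $Y=[0,1]$ generated by the single bisection $\phi:[0,\tfrac12]\to[\tfrac12,1]$, $x\mapsto x+\tfrac12$, with $\rho(\phi)=g$ for some $g\in\Mod^0(\Sigma)$ not fixing $c$ (e.g.\ a pseudo-Anosov). Then both $[0,\tfrac12]$ and $[\tfrac12,1]$ lie in your family $\mathcal F_c$ (the restricted groupoid is trivial on each), yet their union does not. Your transfinite procedure, started at $[0,\tfrac12]$, cannot enlarge: every point of $[\tfrac12,1]$ is the range of a bad cross-arrow. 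So you output $A_c=[0,\tfrac12]$, but its complement $[\tfrac12,1]$ is itself a positive-measure member of $\mathcal F_c$, contradicting the maximality you need. The claimed essential uniqueness of $A_c$ fails for the same reason, and with it your verification of condition~(2) in the definition of a canonical reduction set: the sentence ``any positive-measure Borel $U\subseteq Y_i$ on which $c'$ becomes invariant must be contained in $A_{c'}$'' is simply false in this example. Indeed, in this example your map $\Phi$ would assign the value $\emptyset$ to all of $[\tfrac12,1]$, but $\emptyset$ is \emph{not} a canonical reduction set for $\calg_{|[\tfrac12,1]}$.

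The paper's proof takes a different route and avoids this obstruction. Rather than trying to build curve-by-curve maximal invariant sets, it performs a finite-depth recursion: first carve off a maximal-measure Borel subset on which \emph{some} curve becomes invariant (this exists by a straightforward exhaustion, since one is only maximizing a single measure), record on each piece the full collection $\calc_i$ of invariant curves, and then recurse inside the elementwise stabilizer $\Gamma_{\calc_i}\subsetneq\Mod^0(\Sigma)$. The crucial point---which your argument never invokes---is that any strictly increasing chain of such collections $\calc_i\subsetneq\calc_{i,j}\subsetneq\cdots$ has length bounded in terms of $\Sigma$ alone, because the corresponding elementwise stabilizers strictly decrease. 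This forces the recursion to terminate after boundedly many rounds, yielding a countable partition automatically. Your curve-by-curve strategy does not see this finiteness, which is why both the construction of $A_c$ and the countability of the final partition remain problematic.
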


\begin{proof}
Let $\mathfrak{F}$ be the set of all Borel subsets $U\subseteq X$ which admit a partition $U=\dunion_{i\in I}U_i$ into at most countably many Borel subsets, such that for every $i\in I$, there exists an essential simple closed curve $c_i$ on $\Sigma$ whose isotopy class is $(\calg_{|U_i},\rho)$-invariant. The set $\mathfrak{F}$ is stable under countable unions. Therefore, by Lemma~\ref{lemma:zorn}, it has a maximal element $Y'_0$, i.e.\ such that every $W\in\mathfrak{F}$ has a conull Borel subset contained in $Y'_0$.


Let $Y_0=Y\setminus Y'_0$. The maximality of $Y'_0$ ensures that $(\calg_{|Y_0},\rho)$ is irreducible. 

The definition of $Y'_0$ allows us to fix a partition $Y'_0=\dunion_{i\in I_0}Y_i$ into at most countably many Borel subsets such that for every $i\in I_0$, there exists an essential simple closed curve $c_i$ whose isotopy class is $(\calg_{|Y_i},\rho)$-invariant. For every $i\in I_0$, let $\calc_i$ be the (nonempty) set of all isotopy classes of essential simple closed curves on $\Sigma$ that are $(\calg_{|Y_i},\rho)$-invariant. Let $\Gamma_{\calc_i}$ be the elementwise stabilizer of $\calc_i$ in $\Mod^0(\Sigma)$: this is a proper subgroup of $\Mod^0(\Sigma)$ because $\calc_i\neq\emptyset$. Repeating the above argument, for every $i\in I_0$, there exists a Borel partition $Y_i=Y_{i,0}\dunion Y_{i,0}'$ such that 
\begin{enumerate}
\item for every Borel subset $U\subseteq Y_{i,0}$ of positive measure, every $(\calg_{|U},\rho)$-invariant isotopy class of essential simple closed curve belongs to $\calc_i$,
\item there exists a partition $Y'_{i,0}=\dunion_{j\in J_{i}}Y_{i,j}$ into at most countably many Borel subsets such that for every $j\in J_i$,  there exists an essential simple closed curve on $\Sigma$ whose isotopy class is $(\calg_{|Y_{i,j}},\rho)$-invariant, but does not belong to $\calc_i$. 
\end{enumerate}   
For every $j\in J_i$, we then let $\calc_{i,j}$ be the set of all isotopy classes of essential simple closed curves on $\Sigma$ that are $(\calg_{|Y_{i,j}},\rho)$-invariant. And we let $\Gamma_{\calc_{i,j}}$ be the elementwise stabilizer of $\calc_{i,j}$ in $\Mod^0(\Sigma)$. We observe that $\Gamma_{\calc_{i,j}}$ is a proper subgroup of $\Gamma_{\calc_i}$. Indeed, there exists a conull Borel subset $Y_i^*\subseteq Y_i$ such that $\rho(\calg_{|Y_i^*})\subseteq\Gamma_{\calc_i}$. If $\Gamma_{\calc_{i,j}}=\Gamma_{\calc_i}$, then every curve in $\calc_{i,j}\setminus\calc_i$ is $\Gamma_{\calc_i}$-invariant, and therefore $(\calg_{|Y_i},\rho)$-invariant, contradicting the definition of $\calc_i$. This contradiction shows that $\Gamma_{\calc_{i,j}}\subsetneq\Gamma_{\calc_i}$.

We now repeat the above procedure inductively. By Corollary~\ref{cor:stabchainbound}, there is a bound, only depending on the topology of $\Sigma$, on a chain (for inclusion) of collections of curves on $\Sigma$ with pairwise distinct elementwise stabilizers in $\Mod^0(\Sigma)$. Thus we attain a partition of $Y$ with the required properties after finitely many iterations of the above procedure. This completes the proof.    
\end{proof}

The following lemma justifies the \emph{canonicity} of a canonical reduction set.

\begin{lemma}\label{lemma:crs-normal}
Let $\calg$ be a measured groupoid over a standard probability space $Y$, equipped with a strict cocycle $\rho:\calg\to\Mod^0(\Sigma)$, and let $\calh$ be a measured subgroupoid of $\calg$. Assume that $(\calh,\rho)$ has a canonical reduction set $\calc$.

Then for every measured subgroupoid $\calh'$ of $\calg$ that normalizes $\calh$, the set $\calc$ is $(\calh',\rho)$-invariant. In other words, denoting by $\Stab(\calc)$ the global stabilizer of $\calc$ in $\Mod^0(\Sigma)$, there exists a conull Borel subset $Y^*\subseteq Y$ such that $\rho(\calh'_{|Y^*})\subseteq\Stab(\calc)$.
\end{lemma}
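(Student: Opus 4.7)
}

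The plan is to exploit the $B$-invariance of $\calh$ under each bisection of $\calh'$ in order to push the curves of $\calc$ along arrows of $\calh'$ and produce, for each fixed $c\in\calc$, a Borel map from (a subset of) $Y$ to the countable set of isotopy classes of curves whose values must themselves lie in $\calc$ by canonicity. The set $\calc$ is countable (it consists of isotopy classes of essential simple closed curves on $\Sigma$), so once we have handled a single $c$, a countable intersection over $c\in\calc$ and a further application to $B^{-1}$ will yield the result.

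First, by the definition of normalization, I would pick a countable family of bisections $(B_n)_{n\in\mathbb{N}}$ covering $\calh'$ such that $\calh$ is $B_n$-invariant for every $n$, and reduce the problem to showing that for each fixed $n$ and each fixed $c\in\calc$, one has $\rho(g)\cdot c\in\calc$ for almost every $g\in B_n$. Fix such a bisection $B$ and such a curve $c\in\calc$. Let $\phi_B:s(B)\to r(B)$ be the Borel isomorphism induced by $B$, and for each $y\in r(B)$ let $g_y$ be the unique element of $B$ with $r(g_y)=y$. Define a Borel map
\[\psi:r(B)\longrightarrow\{\text{isotopy classes of essential simple closed curves on }\Sigma\},\qquad \psi(y)=\rho(g_y)\cdot c.\]

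Next, I would show that $\psi$ is $(\calh_{|r(B)},\rho)$-equivariant on a conull subset. Let $h\in\calh_{|r(B)}$ with $s(h),r(h)\in r(B)$, and set $g_1=g_{s(h)}$, $g_2=g_{r(h)}$, both in $B$. By $B$-invariance of $\calh$, the element $g_2^{-1}hg_1\in\calg$ belongs to $\calh$ (since $h=g_2(g_2^{-1}hg_1)g_1^{-1}$ does). Because $c$ is $(\calh,\rho)$-invariant, $\rho(g_2^{-1}hg_1)\cdot c=c$, which rearranges to
\[\rho(h)\cdot\psi(s(h))=\rho(h)\rho(g_1)\cdot c=\rho(g_2)\cdot c=\psi(r(h)),\]
giving the claimed equivariance almost everywhere.

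Since the target of $\psi$ is countable, I can partition $r(B)$ into at most countably many Borel subsets $(U_i)_{i\in I}$ on which $\psi$ is constant, with value some curve $c_i$. Equivariance of $\psi$ implies that $c_i$ is $(\calh_{|U_i},\rho)$-invariant, so by canonicity of $\calc$ (applied only to those $U_i$ of positive measure; the others may be absorbed into a null set) we obtain $c_i\in\calc$. Therefore $\rho(g_y)\cdot c\in\calc$ for almost every $y\in r(B)$, equivalently $\rho(g)\cdot c\in\calc$ for almost every $g\in B$. Running this argument over the countable family of pairs $(B_n,c)$ with $c\in\calc$ yields $\rho(g)\cdot\calc\subseteq\calc$ for almost every $g\in\calh'$. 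The symmetric inclusion $\rho(g)^{-1}\cdot\calc\subseteq\calc$ follows by applying the same argument to the bisections $B_n^{-1}$, which also cover $\calh'$ and with respect to which $\calh$ is invariant (as is immediate from the definition). Hence $\rho(g)\in\Stab(\calc)$ almost everywhere, which is the desired conclusion.

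I do not anticipate a serious obstacle; the only mild care is to make sure the $\psi$ is genuinely Borel (which follows from the Borel structure of $B$ and the Lusin--Novikov uniformization of $B$ as a Borel graph of a function) and that the partition of $r(B)$ by values of $\psi$ only invokes canonicity on positive measure pieces.
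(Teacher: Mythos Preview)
Your proposal is correct and follows essentially the same approach as the paper's proof. The only cosmetic difference is that the paper subdivides each bisection so that $\rho$ takes a single value $\gamma_n$ on it and then argues that $\gamma_n c$ is $(\calh_{|V_n},\rho)$-invariant, whereas you keep the bisection whole, define the map $\psi(y)=\rho(g_y)\cdot c$, verify its $(\calh_{|r(B)},\rho)$-equivariance, and partition $r(B)$ by values of $\psi$; these two bookkeeping choices are interchangeable and lead to the same invocation of the canonicity of $\calc$.
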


\begin{proof}
Since $\calh'$ normalizes $\calh$, there exists a covering of $\calh'$ by countably many bisections $B_n$ that all leave $\calh$ invariant. Up to subdividing the bisections $B_n$, we will assume that for every $n\in\mathbb{N}$, the $\rho$-image of $B_n$ is a single element $\gamma_n\in\Mod^0(\Sigma)$. For every $n\in\mathbb{N}$, we let $U_n$ and $V_n$ be the source and range of $B_n$. 

Let $c\in\calc$ and $n\in\mathbb{N}$. Then $c$ is $(\calh_{|U_n},\rho)$-invariant, so by $B_n$-invariance of $\calh$, the isotopy class $\gamma_n c$ is $(\calh_{|V_n},\rho)$-invariant.
If $V_n$ has positive measure, the maximality condition in the definition of a canonical reduction set ensures that $\gamma_n c\in\calc$. By reversing the arrows in the bisection $B_n$, we also derive that $\gamma_nc\notin\calc$ if $c\notin\calc$. 

Let now $Y^*\subseteq Y$ be a conull Borel subset which avoids each of the countably many subsets $U_n$ and $V_n$ of zero measure. The above ensures that $\rho(\calh'_{|Y^*})\subseteq\Stab(\calc)$. This concludes our proof.
\end{proof}

Recall from Lemma~\ref{lem:subsurface-filled} that given a (possibly
infinite) set $\calc$ of isotopy classes of essential simple closed
curves on $\Sigma$, there is a unique subsurface $S$ filled by
$\calc$. The multicurve $X$, obtained from $\partial S$ by only
keeping one curve in each isotopy class, is called the \emph{boundary
  multicurve} of $\calc$.

\begin{cor}\label{cor:reducibility-normal}
Let $\calg$ be a measured groupoid over a standard probability space $Y$, equipped with a strict cocycle $\rho:\calg\to\Mod^0(\Sigma)$. Let $\calh,\calh'\subseteq\calg$ be measured subgroupoids. Assume that $\calh$ is stably normalized by $\calh'$, and that for every Borel subset $U\subseteq Y$ of positive measure, one has $\rho(\calh_{|U})\neq\{1\}$.

If $(\calh,\rho)$ is reducible, then so is $(\calh',\rho)$.
\end{cor}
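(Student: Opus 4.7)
My plan is to use the theory of canonical reduction sets developed in Lemmas~\ref{lemma:crs} and~\ref{lemma:crs-normal} to transfer reducibility from $\calh$ to $\calh'$. Starting from reducibility of $(\calh,\rho)$, I would pick an essential simple closed curve $c$ and a positive measure Borel subset $U\subseteq Y$ such that $c$ is $(\calh_{|U},\rho)$-invariant. Applying Lemma~\ref{lemma:crs} to $(\calh,\rho)$ and intersecting the resulting partition with the countable Borel partition witnessing the stable normalization of $\calh$ by $\calh'$, I would obtain a Borel partition $Y=\dunion_i Y_i$ on each piece of which $(\calh_{|Y_i},\rho)$ admits a canonical reduction set $\calc_i$ and $\calh_{|Y_i}$ is normalized by $\calh'_{|Y_i}$. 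Some piece $W:=Y_i\cap U$ has positive measure, and the maximality clause in the definition of a canonical reduction set forces $c\in\calc_W$, so $\calc_W$ is nonempty. Lemma~\ref{lemma:crs-normal} then yields that $\calc_W$ is $(\calh'_{|W},\rho)$-invariant as a set.

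I next introduce the essential subsurface $S\subseteq\Sigma$ filled by $\calc_W$, together with its finite (possibly empty) boundary multicurve $X=\partial S$. Since $S$ is canonically attached to $\calc_W$, any setwise stabilizer of $\calc_W$ preserves both $S$ and $X$; hence there is a conull $W^*\subseteq W$ on which $\rho(\calh'_{|W^*})$ preserves $X$ as a set. The argument then splits according to whether $X$ is empty.

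If $X\neq\emptyset$, the key point is that every element of $\Mod^0(\Sigma)$ is rotationless. Any such element permuting the finite multicurve $X$ has some power fixing each curve of $X$ individually, and rotationlessness then forces the element itself to fix each unoriented isotopy class in $X$. Thus $\rho(\calh'_{|W^*})$ lies in the elementwise stabilizer of $X$, and any $c'\in X$ witnesses reducibility of $(\calh',\rho)$. If instead $X=\emptyset$, then $\calc_W$ fills $\Sigma$; since every curve of $\calc_W$ is $(\calh_{|W^*},\rho)$-invariant, $\rho(\calh_{|W^*})$ is contained in the elementwise stabilizer in $\Mod^0(\Sigma)$ of a filling collection of curves. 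The hard step is to show this stabilizer is trivial: for a nontrivial pure mapping class $\varphi$, the Thurston--Nielsen decomposition confines its fixed curves to the union of its canonical reduction system with its identity subsurface components, which is a proper essential subsurface of $\Sigma$; hence no pure $\varphi\neq 1$ can fix every curve of a filling collection. Consequently $\rho(\calh_{|W^*})=\{1\}$, contradicting the hypothesis that $\rho(\calh_{|U'})\neq\{1\}$ for every positive measure $U'\subseteq Y$ (applied with $U'=W^*$). The main obstacle is thus Case~2, which crucially uses the hypothesis on $\rho(\calh_{|U})$: without it, $(\calh,\rho)$ could be reducible trivially by fixing every curve in $\Sigma$, and no information would transfer to $\calh'$.
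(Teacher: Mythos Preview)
Your argument is correct and follows essentially the same route as the paper's proof. Both proofs pass to a positive measure subset on which $(\calh,\rho)$ has a nonempty canonical reduction set, rule out the case where this set fills $\Sigma$ using the hypothesis $\rho(\calh_{|U})\neq\{1\}$, and then invoke Lemma~\ref{lemma:crs-normal} to transfer invariance of the boundary multicurve to $\calh'$. You are simply more explicit than the paper on two points it leaves implicit: the rotationless property of elements of $\Mod^0(\Sigma)$ (to pass from setwise invariance of the finite multicurve $X$ to invariance of an individual curve), and the Nielsen--Thurston argument showing that a nontrivial pure mapping class cannot fix a filling collection of curves. One small notational quibble: your $\calc_W$ is really the canonical reduction set $\calc_i$ of $(\calh_{|Y_i},\rho)$ for the piece $Y_i$ containing $W$; it would be cleaner to say so, since Lemma~\ref{lemma:crs-normal} is applied over $Y_i$ (where normalization holds) and then restricted to $W$.
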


\begin{proof}
  Since $(\calh,\rho)$ is reducible, we can find a Borel subset $U\subseteq Y$ of positive measure such that $(\calh_{|U},\rho)$ has a nonempty canonical reduction set $\calc$. As $\rho(\calh_{|U})\neq\{1\}$, the set $\calc$ does not fill $\Sigma$ (Lemma~\ref{lem:fixing-filling}), so the boundary multicurve $X$ of $\calc$ is nonempty. Up to restricting to a Borel subset of $U$ of positive measure  (which does not change the canonical reduction set of $(\calh_{|U},\rho)$), we can assume that $\calh_{|U}$ is normalized by $\calh'_{|U}$. Lemma~\ref{lemma:crs-normal} ensures that $\calc$ is $(\calh'_{|U},\rho)$-invariant. In particular $X$ is $(\calh'_{|U},\rho)$-invariant, showing that $(\calh',\rho)$ is reducible.
\end{proof}

When $\calc$ is the canonical reduction set for $(\calh,\rho)$, the boundary multicurve $X$ of $\calc$ will be called the \emph{canonical reduction multicurve} of $(\calh,\rho)$. A connected component $S$ of $\Sigma\setminus X$ is then called \emph{active} for $(\calh,\rho)$ if it contains an essential simple closed curve whose isotopy class does not belong to $\calc$, and \emph{inactive} for $(\calh,\rho)$ otherwise (because in the latter case, every element in the elementwise stabilizer of $\calc$ acts trivially on $S$).

We give a few examples of active and inactive subsurfaces in the case that the essential image of $\rho$ is a cyclic subgroup generated by $\varphi$ and $\rho$ has trivial kernel.
\begin{enumerate}[i)]
\item If $\varphi$ is a partial pseudo-Anosov supported on a connected subsurface $Z \subset \Sigma$, possibly composed with Dehn twists about curves contained in $\partial Z$, then the canonical reduction multicurve is $\partial Z$, and $Z$ is the only active complementary component. 
\item If $\varphi$ is a Dehn twist about a curve $\alpha$, then the canonical reduction multicurve is $\alpha$, and all complementary components are inactive.
\end{enumerate}

We also observe that if $\rho:\calg\to\Mod^0(\Sigma)$ is an action-type cocycle, and if $H\subseteq\Mod^0(\Sigma)$ is a subgroup, then $\rho^{-1}(H)$ has a canonical reduction set, equal to the canonical reduction set of $H$ (in the sense recalled in Section~\ref{sec:bg}) -- in particular $\rho^{-1}(H)$ has a canonical reduction multicurve, equal to that of $H$. Indeed, every curve on $\Sigma$ whose isotopy class is fixed by $H$, is also fixed by $\rho^{-1}(H)$ (up to isotopy). And conversely, let $c$ be a curve whose isotopy class is fixed by $\rho^{-1}(H)_{|U}$ for some positive measure Borel subset $U\subseteq Y$, and let $h\in H$. Since $\rho$ is action-type, there exists $n\neq 0$ such that $h^n$ is in the essential image of the restriction of $\rho$ to $\calg_{|U}$. In particular $h^n$ fixes $c$, so $h$ fixes $c$ as we are working in the rotationless subgroup $\Mod^0(\Sigma)$. So $c$ is fixed by $H$, i.e.\ $c$ belongs to the canonical reduction set of $H$.

\subsection{Exploiting amenable normalized subgroupoids, after Kida}

The following statement, which was established by Kida in \cite[Section~4.4.1]{Kid}, will be used extensively in the remainder of this section, applied either to $\partial V$ or to subsurfaces of $\partial V$. We include a proof to explain how to deal with disconnected subsurfaces.

\begin{lemma}[Kida]\label{lemma:kida-ia}
Let $\Sigma$ be a (possibly disconnected) surface of finite type, so that every connected component has negative Euler characteristic. Let $\calg$ be a measured groupoid, equipped with a strict cocycle $\rho:\calg\to\Mod^0(\Sigma)$. Let $\calh$ be a measured subgroupoid of $\calg$ such that $\rho_{|\calh}$ has trivial kernel. 

If $\calh$ stably normalizes an amenable subgroupoid $\cala$ of $\calg$, with $(\cala,\rho)$ irreducible, then $\calh$ is amenable. 
\end{lemma}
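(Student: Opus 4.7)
My plan is to reduce to Kida's original statement for connected surfaces by treating the components of $\Sigma$ separately. Write $\Sigma = \Sigma_1 \sqcup \cdots \sqcup \Sigma_k$ with each $\Sigma_j$ connected of negative Euler characteristic. Since $\Mod^0(\Sigma)$ is by definition the direct product $\prod_j \Mod^0(\Sigma_j)$ (no component-permuting mapping classes are allowed), the cocycle $\rho$ decomposes as $\rho=(\rho_1,\dots,\rho_k)$ with $\rho_j:\calg\to\Mod^0(\Sigma_j)$. My first observation is that the irreducibility of $(\cala,\rho)$ passes to each $(\cala,\rho_j)$: an essential simple closed curve $c$ on $\Sigma_j$ is also essential on $\Sigma$, and its $(\cala_{|U},\rho_j)$-invariance is exactly its $(\cala_{|U},\rho)$-invariance because $\Mod^0(\Sigma_i)$ acts trivially on curves of $\Sigma_j$ for $i\neq j$. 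This reduces us, one component at a time, to the connected-surface setting of \cite{Kid-memoir}.

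For each fixed $j$, I would argue as follows. Since $\cala$ is amenable and $\PMF(\Sigma_j)$ is a compact metrizable space with $\Mod^0(\Sigma_j)$-action, there exists a $(\cala,\rho_j)$-equivariant Borel map $Y\to\Prob(\PMF(\Sigma_j))$. Via an Adams-type maximality argument (analogous to the construction of $\theta_{\max}$ in the proof of Lemma~\ref{lemma:free}), I can extract a canonical such map $\mu_j^{\max}$. Irreducibility of $(\cala,\rho_j)$ forces the support of $\mu_j^{\max}(y)$ to consist, for almost every $y$, of the pair of projective classes of stable and unstable measured foliations of a pseudo-Anosov on $\Sigma_j$: otherwise, using the standard simplex-of-measures description of reducible supports in $\PMF(\Sigma_j)$ (following Masur--Kaimanovich and Kida), one would canonically extract a $(\cala_{|U},\rho_j)$-invariant multicurve from the support, contradicting irreducibility. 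The stabilizer in $\Mod^0(\Sigma_j)$ of such a pseudo-Anosov pair is virtually cyclic, hence amenable.

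Since $\mu_j^{\max}$ is constructed purely in terms of $(\cala,\rho_j)$, an argument in the spirit of Lemma~\ref{lemma:crs-normal} shows that $\mu_j^{\max}$ is also stably $(\calh,\rho_j)$-equivariant, using that $\calh$ stably normalizes $\cala$. Thus, up to a countable Borel partition of $Y$, the image $\rho_j(\calh)$ is contained in the amenable stabilizer of a fixed pseudo-Anosov pair. Doing this for every $j$, the image $\rho(\calh)$ lies stably in a product of amenable subgroups of $\prod_j \Mod^0(\Sigma_j)$, hence in an amenable subgroup of $\Mod^0(\Sigma)$. Because $\rho_{|\calh}$ has trivial kernel, preimages of amenable subgroups of $\Mod^0(\Sigma)$ under $\rho_{|\calh}$ are amenable subgroupoids (as recalled in the background section), and amenability is stable under countable Borel partitions of the base. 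This yields that $\calh$ is amenable.

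The main obstacle I anticipate is not the disconnected bookkeeping, which is straightforward once the decomposition $\rho=(\rho_1,\ldots,\rho_k)$ is set up, but rather the irreducibility dichotomy on each component: one must verify that whenever $\mu_j^{\max}(y)$ fails to concentrate on a pseudo-Anosov pair, the geometry of $\PMF(\Sigma_j)$ produces a canonical invariant multicurve. This is precisely the connected-surface content of Kida's argument in \cite{Kid-memoir}; once it is applied componentwise, the remaining assembly into an amenability statement for $\calh$ is essentially formal.
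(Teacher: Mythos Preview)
Your componentwise reduction is correct, and the overall shape of the argument---use amenability of $\cala$ to produce an equivariant map into $\Prob(\PMF(\Sigma_j))$, invoke irreducibility to force the measures onto arational foliations, then run an Adams-type argument to land in $\calp_{\le 2}$ of the Gromov boundary of the curve complex---matches the paper's proof closely. The paper makes the same decomposition $\rho=(\rho_1,\dots,\rho_k)$ and treats each factor exactly this way.

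There is, however, a genuine gap in your final step. From the existence of a stably $(\calh,\rho_j)$-equivariant map $\mu_j^{\max}:Y\to\calp_{\le 2}(\partial_\infty\calc(\Sigma_j))$ you conclude that ``up to a countable Borel partition of $Y$, the image $\rho_j(\calh)$ is contained in the amenable stabilizer of a fixed pseudo-Anosov pair.'' This inference is not valid: it would require $\mu_j^{\max}$ to take only countably many values so that you can partition $Y$ by level sets, but $\calp_{\le 2}(\partial_\infty\calc(\Sigma_j))$ is uncountable. Equivariance only tells you $\rho_j(g)\cdot\mu_j^{\max}(s(g))=\mu_j^{\max}(r(g))$, which does not place $\rho_j(g)$ in the stabilizer of any fixed element. (A related imprecision: the pair you obtain need not consist of the stable and unstable foliations of any pseudo-Anosov; you only know it is a set of at most two points in $\partial_\infty\calc(\Sigma_j)$.)

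The paper closes this gap differently: rather than trying to trap $\rho(\calh)$ in a fixed amenable subgroup, it uses that the $\Mod(\Sigma_j)$-action on $\partial_\infty\calc(\Sigma_j)$ is \emph{Borel amenable} (by \cite{Kid-memoir,Ham}), hence so is the diagonal action on $\prod_j\calp_{\le 2}(\partial_\infty\calc(\Sigma_j))$. The existence of an $(\calh,\rho)$-equivariant Borel map to a Borel amenable $\Mod^0(\Sigma)$-space, together with triviality of the kernel of $\rho_{|\calh}$, then yields amenability of $\calh$ directly via \cite[Proposition~3.38]{GH}. This is the missing ingredient you need.
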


In the following proof, we will make use of the (compact) space $\mathrm{PML}$ of projective measured laminations, and the (measurable) subspace $\mathrm{AL}$ of \emph{arational} (i.e.\ minimal and filling) laminations. These notions arose in Thurston's work on surfaces; standard references include \cite{CB,FLP}. Let us also mention the dictionary with Kida's work for comparison. Via the dictionary between measured laminations and measured foliations (see e.g.\ \cite{Lev}), our space $\mathrm{PML}$ is isomorphic to the space $\mathcal{PMF}$ of projective measured foliations. And $\mathrm{AL}$ is the same, in Kida's notation, as the subspace $\mathcal{MIN}$ consisting of minimal foliations.

\begin{proof}
Up to a countable Borel partition of the base space $Y$ of $\calg$ (which does not affect the conclusion), we will assume that $\calh$ normalizes $\cala$.

Let $\Sigma_1,\dots,\Sigma_k$ be the connected components of $\Sigma$. Then $\Mod^0(\Sigma)$ decomposes as $\Mod^0(\Sigma)=\Mod^0(\Sigma_1)\times\dots\times\Mod^0(\Sigma_k)$. For $i\in\{1,\dots,k\}$, let $\rho_i:\calg\to\Mod^0(\Sigma_i)$ be the cocycle obtained by post-composing $\rho$ with the $i^{\text{th}}$ projection.

Let $i\in\{1,\dots,k\}$. Then $\Mod^0(\Sigma_i)$ acts on the compact metrizable space $\PML(\Sigma_i)$ of projective measured laminations on $\Sigma_i$. As $\cala$ is amenable, there exists an $(\cala,\rho_i)$-equivariant Borel map $\mu:Y\to\Prob(\PML(\Sigma_i))$. The space $\PML(\Sigma_i)$ has a $\Mod(\Sigma_i)$-invariant Borel partition into the subspace $\AL_i$ made of arational laminations, and the subspace $\NAL_i$ made of non-arational laminations.

Let us first assume towards a contradiction that there exists a Borel subset $U\subseteq Y$ of positive measure such that for all $y\in U$, the measure $\mu(y)$ gives positive measure to $\NAL_i$. After restricting $\mu(y)$ to $\NAL_i$ and renormalizing it to get a probability measure, we obtain an $(\cala_{|U},\rho_i)$-equivariant Borel map $U\to\Prob(\NAL_i)$. Let $\calp_{<\infty}(\calc(\Sigma_i))$ be the countable set of all nonempty finite sets of isotopy classes of essential simple closed curves on $\Sigma_i$. There is a $\Mod(\Sigma_i)$-equivariant map $\NAL_i\to\calp_{<\infty}(\calc(\Sigma_i))$, sending a lamination to the union of all simple closed curves it contains together with all boundaries of the subsurfaces it fills. We thus get an $(\cala_{|U},\rho_i)$-equivariant Borel map $U\to\Prob(\calp_{<\infty}(\calc(\Sigma_i)))$. As $\calp_{<\infty}(\calc(\Sigma_i))$ is countable, there is also a $\Mod(\Sigma_i)$-equivariant map $\Prob(\calp_{<\infty}(\calc(\Sigma_i)))\to\calp_{<\infty}(\calc(\Sigma_i))$, sending a probability measure $\nu$ to the union of all finite sets with maximal $\nu$-measure. In summary, we have found an $(\cala_{|U},\rho_i)$-equivariant Borel map $U\to\calp_{<\infty}(\calc(\Sigma_i))$. Let $V\subseteq U$ be a Borel subset of positive measure where this map is constant, with value a finite set $\calf$. As we are working in the finite-index subgroup $\Mod^0(\Sigma_i)$, every curve in $\calf$ is $(\cala_{|V},\rho_i)$-invariant, contradicting the irreducibility of $(\cala,\rho)$.  

Therefore $\mu$ determines an $(\cala,\rho_i)$-equivariant Borel map $Y\to\Prob(\AL_i)$. Klarreich's description \cite{Kla} of the boundary $\partial_\infty\calc_i$ of the curve graph of $\Sigma_i$ yields a continuous $\Mod(\Sigma_i)$-equivariant map $\AL_i\to\partial_\infty\calc_i$, so we get an $(\cala,\rho_i)$-equivariant Borel map $Y\to\Prob(\partial_\infty\calc_i)$. Denoting by $(\partial_\infty\calc_i)^{(3)}$ the space of pairwise distinct triples, Kida proved in \cite[Section~4.1]{Kid-memoir} the existence of a $\Mod(\Sigma_i)$-equivariant Borel map $(\partial_\infty\calc_i)^{(3)}\to \calp_{<\infty}(\calc(\Sigma_i))$. Using again the irreducibility of $(\cala,\rho)$, together with an Adams-type argument as in the proof of Lemma~\ref{lemma:free}, we deduce that there exists a Borel map $Y\to\calp_{\le 2}(\partial_\infty\calc_i)$ which is both $(\cala,\rho_i)$-equivariant and $(\calh,\rho_i)$-equivariant.

Combining all these maps as $i$ varies in $\{1,\dots,k\}$ yields an $(\calh,\rho)$-equivariant Borel map $$Y\to\calp_{\le 2}(\partial_\infty\calc_1)\times\dots\times\calp_{\le 2}(\partial_\infty\calc_k).$$ For every $i\in\{1,\dots,k\}$, the action of $\Mod(\Sigma_i)$ on $\partial_\infty\calc_i$ is Borel amenable \cite{Kid-memoir,Ham}, and therefore so is the action of $\Mod^0(\Sigma)$ on $\calp_{\le 2}(\partial_\infty\calc_1)\times\dots\times\calp_{\le 2}(\partial_\infty\calc_k)$ (see e.g.\ \cite[Section~3.4.1]{HH2} for the relevant background). As $\rho_{|\calh}$ has trivial kernel, it then follows from \cite[Proposition~3.38]{GH} (originally due to Kida \cite[Proposition~4.33]{Kid-memoir}) that $\calh$ is amenable.
\end{proof}

\subsection{Uniqueness statements}

\begin{lemma}\label{lemma:uniqueness}
Let $\calg$ be a measured groupoid over a standard probability space $Y$, equipped with a strict action-type cocycle $\rho:\calg\to\Mod^1(V)$. Let $\calh$ be a measured subgroupoid of $\calg$. 

Let $c$ be a nonseparating meridian, and let $c'$ be an essential simple closed curve on $\partial V$. Assume that there exists a Borel subset $U\subseteq Y$ of positive measure such that $\calh_{|U}$ is equal to the $(\calg_{|U},\rho)$-stabilizer of the isotopy class of $c$, and the isotopy class of $c'$ is $(\calh_{|U},\rho)$-invariant. 

Then $c'=c$ (up to isotopy).
\end{lemma}

\begin{proof}
The stabilizer of $c$ in $\Mod^1(V)$ contains an element $g$ which restricts to a pseudo-Anosov element on $\partial V\setminus c$ (Lemma~\ref{lemma:pA-in-complement}). The groupoid $\rho^{-1}(\langle g\rangle)_{|U}$ is contained in $\calh_{|U}$, and it is of infinite type since $\rho$ is action-type. Therefore $c'$ is fixed by some positive power of $g$, which implies that $c'=c$ up to isotopy.
\end{proof}

The following is a version of Lemma~\ref{lemma:uniqueness} for
separating meridians.

\begin{lemma}\label{lemma:uniqueness-disk}
Let $\calg$ be a measured groupoid over a standard probability space $Y$, equipped with a strict action-type cocycle $\rho:\calg\to\Mod^1(V)$. Let $\calh$ be a measured subgroupoid of $\calg$. 

Let $c,c'$ be two separating meridians. Assume that there exists a Borel subset $U\subseteq Y$ of positive measure such that $\calh_{|U}$ is equal to the $(\calg_{|U},\rho)$-stabilizer of the isotopy class of $c$, and the isotopy class of $c'$ is $(\calh_{|U},\rho)$-invariant. 

Then $c=c'$ (up to isotopy). 
\end{lemma}

\begin{proof}
By Corollary~\ref{cor:sep-meridian}, the stabilizer of $c$ in $\Mod^1(V)$ contains an element $g$ such that for every $n\neq 0$, the curve $c$ is (up to isotopy) the only essential separating meridian whose isotopy class is fixed by $g^n$. The groupoid $\rho^{-1}(\langle g\rangle)_{|U}$ is contained in $\calh_{|U}$, and it is of infinite-type since $\rho$ is action-type. Therefore $c'$ is fixed by some positive power of $g$, which by our choice of $g$ implies that $c'=c$ (up to isotopy).   
\end{proof}

\subsection{Property~$\pnsep$ and subgroupoids of non-separating meridian type}\label{sec:characterization-unilateral}

We make the following definition (see Definition~\ref{de:strong-schottky} for the notion of a strongly Schottky pair of subgroupoids).

\begin{de}[Product-like subgroupoid]\label{de:product-like}
A measured groupoid $\calp$ is \emph{product-like} if there exist two subgroupoids $\calp_1,\calp_2\subseteq\calp$ which are both stably normal in $\calp$, such that for every $i\in\{1,2\}$, the groupoid $\calp_i$ contains a strongly Schottky pair of subgroupoids $(\cala_i^1,\cala_i^2)$, with $\cala_i^1$ and $\cala_i^2$ both stably normalized by $\calp_{3-i}$.
\end{de}

Notice that this notion is stable under restrictions and stabilization. In the terminology from \cite[Definition~13.5]{GH}, the subgroupoids $\calp_1$ and $\calp_2$ form a pseudo-product. One difference between our definition and \cite[Definition~13.5]{GH} is that we are working with strongly Schottky pairs of subgroupoids, while \cite[Definition~13.5]{GH} is phrased using the weaker notion of Schottky pairs of subgroupoids. Also, we are further imposing that $\calp_1$ and $\calp_2$ are stably normal in an ambient groupoid $\calp$.

We now introduce the following properties, which will be useful in order to detect subgroupoids of nonseparating-meridian type.

\begin{de}\label{de:pnsep}
Let $\calg$ be a measured groupoid, and let $\cala,\calh$ be measured subgroupoids of $\calg$, with $\cala\subseteq\calh$.
\begin{enumerate}
\item We say that the pair $(\calh,\cala)$ satisfies \emph{Property~$\qnsep$} if the following conditions hold:
\begin{enumerate}
\item $\calh$ is everywhere nonamenable;
\item $\cala$ is amenable, of infinite type, and stably normal in $\calh$;
\item if $\calb$ is a stably normal amenable subgroupoid of $\calh$, then $\calb$ is stably contained in $\cala$;
\item if $\calh'$ is another subgroupoid of $\calg$ which is everywhere nonamenable and contains a stably normal amenable subgroupoid of infinite type, and if $\calh$ is stably contained in $\calh'$, then $\calh$ is stably equal to $\calh'$;
\item for every Borel subset $U\subseteq Y$ of positive measure, the groupoid $\calh_{|U}$ is not contained in any product-like subgroupoid of $\calg_{|U}$. 
\end{enumerate}
\item We say that $\calh$ satisfies \emph{Property~$\pnsep$} if there exists a measured subgroupoid $\cala\subseteq\calh$ such that $(\calh,\cala)$ satisfies Property~$\qnsep$.
\end{enumerate}
\end{de}

\begin{rk}\label{rk:unique-a}
These properties are stable under restrictions and stabilization. Also, if $\calh$ satisfies Property~$\pnsep$, then a subgroupoid $\cala\subseteq\calh$ such that $(\calh,\cala)$ satisfies Property~$\qnsep$ is ``stably unique'' in the following sense: if $\cala$ and $\cala'$ are two such subgroupoids, there exist a conull Borel subset $Y^*\subseteq Y$ and a partition $Y^*=\dunion_{i\in I}Y_i$ into at most countably many Borel subsets such that for every $i\in I$, one has $\cala_{|Y_i}=\cala'_{|Y_i}$. Indeed, this is a consequence of Assumptions~(b) and~(c) from the definition.  
\end{rk}

The goal of the present section is to prove that subgroupoids of nonseparating-meridian type with respect to an action-type cocycle $\calg\to\Mod^1(V)$ (in the sense of Definition~\ref{de:type} and the paragraph below it) satisfy Property~$\pnsep$.

\begin{prop}\label{prop:characterization-vertices}
Let $\calg$ be a measured groupoid over a standard probability space $Y$, equipped with a strict action-type cocycle $\rho:\calg\to\Mod^1(V)$. Let $c$ be a nonseparating meridian, let $\calh$ be the $(\calg,\rho)$-stabilizer of the isotopy class of $c$, and let $\cala=\rho^{-1}(\langle T_c\rangle)$. 

Then $(\calh,\cala)$ satisfies Property~$\qnsep$.
\end{prop}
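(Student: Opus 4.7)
We verify conditions (a)--(e) of Property~$\qnsep$ for the pair $(\calh,\cala)$ using canonical reduction set analysis (Lemmas~\ref{lemma:crs} and~\ref{lemma:crs-normal}), the amenability criterion of Lemma~\ref{lemma:kida-ia}, and the pseudo-Anosov production in Lemma~\ref{lemma:pA-in-complement}.

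Conditions (a) and (b) are the quickest. Since $c$ is nonseparating and $V$ has genus $g\ge 3$, the complement $\partial V\setminus c$ is a connected surface of genus $g-1\ge 2$ with two boundary components, hence not a once-holed torus. Lemma~\ref{lemma:pA-in-complement} yields $f_1,f_2\in\Stab_{\Mod^0(V)}(c)$ generating a nonabelian free group, and Lemma~\ref{lemma:free} then provides the strongly Schottky pair $(\rho^{-1}(\langle f_1\rangle),\rho^{-1}(\langle f_2\rangle))$ inside $\calh_{|U}$ for every positive-measure $U$, establishing (a). For (b), $\langle T_c\rangle$ is infinite amenable and central in $\Stab_{\Mod^0(V)}(c)$, since $gT_cg^{-1}=T_{g(c)}=T_c$ for every $g$ fixing $c$; combined with the action-type property of $\rho$, this makes $\cala$ amenable, of infinite type, and normal in $\calh$.

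For (c), let $\calb\subseteq\calh$ be stably normal and amenable. After a Borel partition of $Y$ from Lemma~\ref{lemma:crs}, $(\calb,\rho)$ admits a canonical reduction set $\calc_\calb$, which is $(\calh,\rho)$-invariant by Lemma~\ref{lemma:crs-normal}. Because $\calh$ contains a pseudo-Anosov on $\partial V\setminus c$, the only $\calh$-invariant essential simple closed curve is $c$, so $\calc_\calb\subseteq\{c\}$; the empty case would make $(\calb,\rho)$ irreducible, forcing $\calh$ amenable by Lemma~\ref{lemma:kida-ia} and contradicting (a). Hence $\calc_\calb=\{c\}$. To conclude $\calb\subseteq\cala$ stably, one must further show that $\rho(\calb)$ acts trivially on $S:=\partial V\setminus c$. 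Letting $\rho_S=\pi_S\circ\rho$ be the restriction cocycle to $\Mod^0(S)$ (which lands in $\Mod^0(S)$ thanks to Lemma~\ref{lem:Mod1}), the canonical reduction set of $(\calb,\rho_S)$ is empty by the same invariance argument (since $\rho_S(\calh)$ still contains pseudo-Anosovs on $S$), so $(\calb,\rho_S)$ is irreducible. Since the kernel of $\rho_S|_\calh$ is exactly $\cala$, which is amenable, a careful adaptation of Lemma~\ref{lemma:kida-ia} tailored to this amenable-kernel situation forces $\rho_S(\calb)$ to be stably trivial, yielding $\rho(\calb)\subseteq\langle T_c\rangle$ and hence $\calb\subseteq\cala$.

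Condition (d) follows analogously: the canonical reduction set of the amenable subgroupoid $\cala'$ normal in $\calh'$ is $(\calh',\rho)$-invariant (Lemma~\ref{lemma:crs-normal}) and hence $(\calh,\rho)$-invariant, nonempty (else $\calh'$ is amenable by Lemma~\ref{lemma:kida-ia}), and contained in $\{c\}$, so equals $\{c\}$; thus $\calh'\subseteq\Stab(c)=\calh$ stably. For (e), suppose for contradiction that $\calh_{|U}\subseteq\calp$ with $\calp$ product-like and subgroupoids $\calp_1,\calp_2$ as in Definition~\ref{de:product-like}. Mutual stable normalization of the strongly Schottky pairs $(\cala_i^1,\cala_i^2)$ by $\calp_{3-i}$ implies, via centralizer analysis in $\Mod^0(V)$, that $\calp_1$ and $\calp_2$ are supported on disjoint subsurfaces $\Sigma_1,\Sigma_2$ of $\partial V$ whose bounding multicurve $X$ is $\calp$-invariant. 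Since $\calh\subseteq\calp$ and $\calh$ only preserves $c$, we get $X\subseteq\{c\}$; but $c$ is nonseparating, so a single curve cannot disconnect $\partial V$ into two disjoint nonempty subsurfaces supporting nonamenable groupoids, a contradiction. The main obstacle in the proof is the last step of (c): handling the restriction cocycle $\rho_S|_\calh$ in spite of its nontrivial (but amenable) kernel $\cala$, which requires an adaptation of the Adams-type argument underlying Lemma~\ref{lemma:kida-ia}; everything else reduces reasonably directly to canonical reduction set manipulations together with the rich supply of pseudo-Anosovs in $\Stab_{\Mod^0(V)}(c)$.
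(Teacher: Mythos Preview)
Your treatment of (a), (b), and (d) matches the paper's approach and is fine. The gaps are in (c) and (e).

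For (c), you correctly reduce to analyzing the restriction cocycle $\rho_S:\calh\to\Mod^0(S)$ with $S=\partial V\setminus c$, and you correctly see the dichotomy: the canonical reduction set of $(\calb,\rho_S)$, being $(\calh,\rho_S)$-invariant, either fills $S$ (whence $\rho_S(\calb)$ is stably trivial and you are done) or is empty. But your handling of the empty case is where things break down. First, your text asserts the empty case always holds, which is backwards; it is precisely the case to be excluded. Second, and more importantly, you then try to apply Lemma~\ref{lemma:kida-ia} to $\calh$ itself and run into the nontrivial kernel $\cala$ of $\rho_S|_\calh$; you flag this as ``the main obstacle'' and invoke an unspecified ``adaptation''. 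No adaptation is needed. The paper's move is simply to apply Lemma~\ref{lemma:kida-ia} not to $\calh$ but to the subgroupoid $\calh'':=\rho^{-1}(F)$, where $F\subseteq\Stab_{\Mod^0(V)}(c)$ is a nonabelian free group whose restriction to $S$ is \emph{injective} (such $F$ exists by Lemma~\ref{lemma:pA-in-complement}). Then $\rho_S|_{\calh''}$ has trivial kernel, $\calh''\subseteq\calh$ normalizes $\calb$, and Lemma~\ref{lemma:kida-ia} yields that $\calh''$ is amenable, contradicting Lemma~\ref{lemma:free}. This cleanly rules out the empty case.

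For (e), your ``centralizer analysis in $\Mod^0(V)$'' leading to disjoint supporting subsurfaces $\Sigma_1,\Sigma_2$ is a heuristic, not a proof; nothing in the hypotheses pins down $\rho(\calp_i)$ to any particular subgroup of $\Mod^0(V)$, and the product-like definition is purely groupoid-theoretic. The paper's argument is quite different and makes essential use of the \emph{strongly} Schottky condition (not just Schottky). One first shows, via canonical reduction of $(\calp,\rho)$, that the induced cocycle $\rho':\calp\to\Mod^0(\partial V\setminus X)$ (with $X\in\{\emptyset,c\}$) has amenable kernel and that $(\calp,\rho')$, $(\calp_2,\rho')$ are irreducible. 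The strongly Schottky property for $(\cala_1^1,\cala_1^2)$ is then used to pass to a positive-measure subset where the kernel of $\rho'$ restricted to $\langle\cala_1^1,\cala_1^2\rangle$ is stably trivial, forcing $(\cala_1^1,\rho')$ to be irreducible (via Corollary~\ref{cor:reducibility-normal}, since $\calp_2$ normalizes it). Finally, the strongly Schottky property for $(\cala_2^1,\cala_2^2)$ gives a further subset where $\rho'$ has trivial kernel on $\langle\cala_2^1,\cala_2^2\rangle$; since this subgroupoid normalizes $\cala_1^1$, Lemma~\ref{lemma:kida-ia} makes $\langle\cala_2^1,\cala_2^2\rangle$ amenable, the desired contradiction. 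Your sketch does not touch this mechanism.
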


Proposition~\ref{prop:characterization-vertices} is the combination of our next three lemmas. Lemma~\ref{lemma:property-a} below checks Assertions~(a),(b) and~(c) from Definition~\ref{de:pnsep}. For later convenience, in this lemma, we also allow for separating meridians in the statement. As we are assuming throughout this section that the handlebody $V$ has genus at least $3$, there are three possibilities for a meridian $c$, namely:
\begin{enumerate}
\item $c$ is nonseparating. In this case, we will consider the restriction homomorphism $\Stab_{\Mod^1(V)}(c)\to\Mod(\partial V\setminus c)$, which is well-defined and takes its values in $\Mod^0(\partial V\setminus c)$ by definition of $\Mod^1(V)$ (recall Definition~\ref{de:mod1}). Its kernel is isomorphic to $\mathbb{Z}$: it is equal to the intersection of $\Mod^1(V)$ with the cyclic subgroup $\langle T_c\rangle$ generated by the Dehn twist about $c$.
\item $c$ is separating, and none of the two connected components $\Sigma_1,\Sigma_2$ of $\partial V\setminus c$ is a once-holed torus. In this case, we will consider the restriction homomorphism $\Stab_{\Mod^1(V)}(c)\to\Mod(\Sigma_1)\times\Mod(\Sigma_2)$, which is well-defined and takes its values in $\Mod^0(\Sigma_1\cup\Sigma_2)=\Mod^0(\Sigma_1)\times\Mod^0(\Sigma_2)$. Again, its kernel is isomorphic to $\mathbb{Z}$, in fact equal to the intersection of $\Mod^1(V)$ with the cyclic subgroup $\langle T_c\rangle$.
\item $c$ is separating, and among the two connected components $\Sigma_1,\Sigma_2$ of $\partial V\setminus c$, exactly one, say $\Sigma_2$, is a once-holed torus. In this case, we will consider the restriction homomorphism $\Stab_{\Mod^1(V)}(c)\to\Mod(\Sigma_1)$, which is well-defined and takes its values in $\Mod^0(\Sigma_1)$. Its kernel is isomorphic to $\mathbb{Z}^2$ by Corollary~\ref{cor:once-holed}.
\end{enumerate}
All three possibilities are allowed in the following statement.

\begin{lemma}\label{lemma:property-a}
Let $\calg$ be a measured groupoid, equipped with a strict action-type cocycle $\rho:\calg\to\Mod^1(V)$. Let $c$ be a meridian, and let $\calh$ be the $(\calg,\rho)$-stabilizer of the isotopy class of $c$. Let $\Sigma\subseteq\partial V$ be the union of all components of $\partial V\setminus c$ which are not once-holed tori. Let $A$ be the kernel of the restriction homomorphism $\Stab_{\Mod^1(V)}(c)\to\Mod^0(\Sigma)$, and let $\cala=\rho^{-1}(A)$. 

Then $\calh$ is everywhere nonamenable, $\cala$ is a normal amenable subgroupoid of $\calh$ of infinite type, and every stably normal amenable subgroupoid of $\calh$ is stably contained in $\cala$.
\end{lemma}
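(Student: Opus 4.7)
For the everywhere nonamenability of $\calh$, I apply Lemma~\ref{lemma:pA-in-complement} to a component $X\subseteq\partial V\setminus c$ which is not a once-holed torus (one exists since $V$ has genus at least $3$), obtaining two elements $g_X,h_X\in\Stab_{\Mod(V)}(c)$ that restrict to pseudo-Anosovs on $X$ and generate a nonabelian free subgroup; after replacing by suitable powers, $g_X,h_X\in\Mod^0(V)$. Lemma~\ref{lemma:free} then provides a strongly Schottky pair of subgroupoids inside $\calh$, which witnesses everywhere nonamenability. For $\cala$, the subgroup $A$ is the kernel of a restriction homomorphism which is well-defined by Lemma~\ref{lem:Mod1}, and therefore $A$ is normal in $\Stab_{\Mod^0(V)}(c)$. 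A direct inspection guided by the footnote of the statement (using that an involved once-holed torus component bounds a genus-$1$ subhandlebody) shows that $A$ is virtually abelian and contains some nontrivial power of $T_c$, so it is amenable and infinite. Since $\rho$ is action-type, $\cala=\rho^{-1}(A)$ inherits amenability, infinite type, and (via the standard bisection-cover argument) normality in $\calh$.

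\textbf{Setup and reducible case for (iii).} Let $\calb\subseteq\calh$ be a stably normal amenable subgroupoid, and suppose for contradiction that it is not stably contained in $\cala$. Write $\rho_\Sigma:\calh\to\Mod^0(\Sigma)$ for $\rho$ composed with the restriction-to-$\Sigma$ homomorphism. After partitioning and restricting, I may assume on a positive measure Borel subset $U\subseteq Y$ that $\calb|_U$ is genuinely normalized by $\calh|_U$, that $\rho_\Sigma(\calb|_V)\neq\{1\}$ for every positive measure $V\subseteq U$, and (by Lemma~\ref{lemma:crs}) that $(\calb|_U,\rho_\Sigma)$ admits a canonical reduction set $\calc$ on $\Sigma$. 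If $\calc\neq\emptyset$, then Corollary~\ref{cor:reducibility-normal} applied with cocycle $\rho_\Sigma$ implies that $(\calh|_U,\rho_\Sigma)$ is reducible, so some essential curve on $\Sigma$ is $(\calh|_U,\rho_\Sigma)$-invariant. But action-type together with Lemma~\ref{lemma:pA-in-complement} shows that $\rho_\Sigma(\calh|_U)$ contains elements restricting to pseudo-Anosov on every component of $\Sigma$, and no such element stabilizes an isotopy class of essential simple closed curve: contradiction.

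\textbf{Irreducible case, and the main obstacle.} If $\calc=\emptyset$, I adapt the Adams-style construction from the proofs of Lemmas~\ref{lemma:free} and~\ref{lemma:kida-ia}. Fix a component $X$ of $\Sigma$. Amenability of $\calb|_U$ yields a $(\calb|_U,\rho_\Sigma)$-equivariant Borel map $U\to\Prob(\PML(X))$, and irreducibility together with the $\Mod(X)$-equivariant map $\NAL\to\calp_{<\infty}(\calc(X))$ from Kida's proof of Lemma~\ref{lemma:kida-ia} forces the measures to be supported on arational laminations, so via Klarreich's theorem I obtain a $(\calb|_U,\rho_\Sigma)$-equivariant map $U\to\Prob(\partial_\infty\calc(X))$. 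The Adams-style maximality argument from the proof of Lemma~\ref{lemma:free} then produces a maximal $(\calb|_U,\rho_\Sigma)$-equivariant Borel map $\varphi_X:U\to\calp_{\le 2}(\partial_\infty\calc(X))$, which by its canonicity is also $(\calh|_U,\rho_\Sigma)$-equivariant. Running the same maximality argument inside the amenable infinite-type subgroupoids $\rho^{-1}(\langle g_X\rangle)|_U$ and $\rho^{-1}(\langle h_X\rangle)|_U$ of $\calh|_U$ shows that, on some positive measure subset, $\varphi_X$ must take values in both $\{g_X^{-\infty},g_X^{+\infty}\}$ and $\{h_X^{-\infty},h_X^{+\infty}\}$; since $g_X,h_X$ generate a nonabelian free subgroup acting loxodromically on $\partial_\infty\calc(X)$ with pairwise disjoint endpoint sets, these two two-element subsets are disjoint, so $\varphi_X$ is forced to take the empty value: contradiction. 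The main obstacle is precisely this irreducible case: since $\rho_\Sigma|_\calh$ has nontrivial kernel $\cala$, Lemma~\ref{lemma:kida-ia} cannot be applied as a black box (its final step would require a trivial-kernel cocycle on $\calh$), and one must instead exploit canonicity of the maximal equivariant map together with the existence of two free pseudo-Anosov generators inside $\calh$ in order to explicitly force the contradiction.
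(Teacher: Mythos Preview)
Your proof is correct. Parts (i) and (ii) coincide with the paper's argument. For part (iii), your reducible case is essentially the paper's observation that the canonical reduction set of $(\calb,\rho_S)$ is $(\calh,\rho_S)$-invariant (via Lemma~\ref{lemma:crs-normal}, which underlies Corollary~\ref{cor:reducibility-normal}), and that the presence of pseudo-Anosovs on each non-torus component forces this set to be either empty or filling.

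The genuine difference is in the irreducible case. You correctly identify the obstacle---that $\rho_\Sigma$ has nontrivial kernel $\cala$ on $\calh$, so Lemma~\ref{lemma:kida-ia} does not apply to $\calh$ directly---and you resolve it by opening up the proof of that lemma: you build the maximal $(\calb,\rho_X)$-equivariant map into $\calp_{\le 2}(\partial_\infty\calc(X))$, use its canonicity to make it $(\calh,\rho_X)$-equivariant, and then play it against the two pseudo-Anosov subgroupoids. The paper instead sidesteps the kernel issue with a one-line observation: the free subgroup $F=\langle g_X,h_X\rangle$ produced by Lemma~\ref{lemma:pA-in-complement} \emph{embeds} in $\Mod^0(X)$ under restriction (being supported on $X$), so the cocycle $\rho_X$ has trivial kernel on the smaller subgroupoid $\calh'=\rho^{-1}(F)\subseteq\calh$. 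Since $\calh'$ still normalizes $\calb$, Lemma~\ref{lemma:kida-ia} applies to $\calh'$ as a black box and yields amenability of $\calh'$, contradicting Lemma~\ref{lemma:free}. Your route shows that the canonicity-of-maximal-maps mechanism is robust enough to survive a nontrivial kernel on the normalizer; the paper's route shows that one can always pass to a subgroupoid of the normalizer on which the kernel vanishes, which is shorter and avoids re-running the Adams argument.
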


\begin{proof}
As follows from the discussion preceding the statement of the lemma, the subsurface $\Sigma$ is nonempty because the genus of $V$ is at least $3$. Lemma~\ref{lemma:pA-in-complement} ensures that $\Stab_{\Mod^1(V)}(c)$ contains a nonabelian free subgroup, so Lemma~\ref{lemma:free} shows that $\calh$ is everywhere nonamenable. 

Normality of $\cala$ in $\calh$ follows from the normality of $A$ in $\Stab_{\Mod^1(V)}(c)$. As mentioned in the discussion preceding the statement, $A$ is amenable: it is either isomorphic to $\mathbb{Z}$ or to $\mathbb{Z}^2$. 
As $\rho$ has trivial kernel, it follows that $\cala$ is amenable (see \cite[Corollary~3.39]{GH}). And $\cala$ is of infinite type because $A$ is infinite and $\rho$ is action-type. 

Let now $\calb\subseteq\calh$ be a stably normal amenable subgroupoid of $\calh$. Let $S\subseteq\Sigma$ be a connected component of $\Sigma$. Let $\rho_S:\calh\to\Mod^0(S)$ be the cocycle obtained by post-composing $\rho$ with the restriction homomorphism. Let also $F\subseteq\Stab_{\Mod^1(V)}(c)$ be a nonabelian free subgroup which embeds into $\Mod^0(S)$ under the restriction homomorphism, and whose image in $\Mod^0(S)$ contains a pseudo-Anosov mapping class (this exists because $S$ is not a once-holed torus, see Lemma~\ref{lemma:pA-in-complement}). Let $\calh'=\rho^{-1}(F)$.

By Lemma~\ref{lemma:crs}, we can find a partition $Y=\dunion_{i\in I}Y_i$ into at most countably many Borel subsets such that for every $i\in I$, the pair $(\calb_{|Y_i},\rho_{S})$ has a canonical reduction set $\calc_i$. As $\calb$ is stably normal in $\calh$, up to refining the above partition, we can assume that for every $i\in I$, the groupoid $\calb_{|Y_i}$ is normal in $\calh_{|Y_i}$. Lemma~\ref{lemma:crs-normal} thus ensures that $\calc_i$ is $(\calh_{|Y_i},\rho_S)$-invariant, so either $\calc_i=\emptyset$ or $\calc_i$ fills $S$. 

Assume towards a contradiction that $\calc_i=\emptyset$ for some $i\in I$ such that $Y_i$ has positive measure. In other words $(\calb_{|Y_i},\rho_{S})$ is irreducible. As $\rho_S$ has trivial kernel in restriction to $\calh'$, and as $\calh'_{|Y_i}$ (which is contained in $\calh_{|Y_i}$) normalizes $\calb_{|Y_i}$, Lemma~\ref{lemma:kida-ia} implies that $\calh'_{|Y_i}$ is amenable. But $F$ is a nonabelian free group and $\rho$ is action-type, so we get a contradiction to Lemma~\ref{lemma:free}.

It follows that for every $i\in I$, there exists a conull Borel subset $Y_i^*\subseteq Y_i$ such that $\rho_{S}(\calb_{|Y_i^*})=\{1\}$. As $S$ was an arbitrary connected component of $\Sigma$, this precisely means that $\calb$ is stably contained in $\cala$. 
\end{proof}

We now check Assertion~(d) from Definition~\ref{de:pnsep}.

\begin{lemma}\label{lemma:property-b}
Let $\calg$ be a measured groupoid over a standard probability space $Y$, equipped with a strict action-type cocycle $\rho:\calg\to\Mod^1(V)$. Let $c$ be a nonseparating meridian, and let $\calh$ be the $(\calg,\rho)$-stabilizer of the isotopy class of $c$. 

If $\calh'$ is a subgroupoid of $\calg$ which is everywhere nonamenable and contains a stably normal amenable subgroupoid of infinite type, and if $\calh$ is stably contained in $\calh'$, then $\calh$ is stably equal to $\calh'$.
\end{lemma}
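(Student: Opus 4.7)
The plan is to exploit the stably normal amenable subgroupoid $\cala'\subseteq\calh'$ of infinite type (given by hypothesis) to produce a $(\calh',\rho)$-invariant essential simple closed curve on $\partial V$ which must then coincide with $c$. Fix such an $\cala'$.

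First, I would show that $(\cala'_{|W},\rho)$ is reducible for every Borel subset $W\subseteq Y$ of positive measure. Suppose toward a contradiction that $(\cala'_{|W},\rho)$ is irreducible for some such $W$. Then Kida's Lemma~\ref{lemma:kida-ia}, applied with $\Sigma=\partial V$ to $\calh'_{|W}$ stably normalizing the amenable subgroupoid $\cala'_{|W}$ (with $\rho_{|\calh'_{|W}}$ of trivial kernel, since $\rho$ is action-type), forces $\calh'_{|W}$ to be amenable. This contradicts the everywhere nonamenability of $\calh'$.

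Next, for any positive-measure $W\subseteq Y$, I would apply Corollary~\ref{cor:reducibility-normal} with $\cala'_{|W}$ in the role of ``$\calh$'' and $\calh'_{|W}$ in the role of ``$\calh'$''; the required hypothesis $\rho(\cala'_{|U})\neq\{1\}$ on every positive-measure $U\subseteq W$ holds because $\cala'$ is of infinite type and $\rho$ has trivial kernel. This produces a positive-measure subset $U\subseteq W$ and an essential simple closed curve $c'$ on $\partial V$ whose isotopy class is $(\calh'_{|U},\rho)$-invariant. Using that $\calh$ is stably contained in $\calh'$, after intersecting $U$ with the witnessing conull subset and selecting a positive-measure piece $U'\subseteq U$ of the corresponding countable partition, one has $\calh_{|U'}\subseteq\calh'_{|U'}$, so $c'$ is also $(\calh_{|U'},\rho)$-invariant. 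The proof of Lemma~\ref{lemma:uniqueness} then applies verbatim to this possibly non-meridian $c'$ (it only uses that the partial pseudo-Anosov $g\in\Stab_{\Mod^0(V)}(c)$ supplied by Lemma~\ref{lemma:pA-in-complement} has canonical reduction system $\{c\}$, so any essential curve fixed by a nonzero power of $g$ must equal $c$), yielding $c'=c$. Hence $\rho(\calh'_{|U'})\subseteq\Stab_{\Mod^0(V)}(c)$, so $\calh'_{|U'}\subseteq\calh_{|U'}$, and combined with the reverse stable containment this gives $\calh_{|U'}=\calh'_{|U'}$.

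A standard measure-theoretic exhaustion argument now concludes: pick a maximal countable pairwise disjoint family of Borel subsets of $Y$ on which $\calh$ and $\calh'$ coincide; the complement of their union must be null, as otherwise the previous paragraph applied to the complement would produce a further such subset, contradicting maximality. Hence $\calh$ and $\calh'$ are stably equal. The main obstacle is the first step: ruling out the irreducibility of $(\cala'_{|W},\rho)$ via Kida's Lemma, which is where both the everywhere nonamenability of $\calh'$ and the assumption that $\cala'$ is a stably normal amenable subgroupoid of infinite type are crucially used together.
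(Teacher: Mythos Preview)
Your argument is correct and follows essentially the same route as the paper's proof. The only difference is packaging: the paper applies Lemma~\ref{lemma:crs} directly to partition $Y$ so that each $(\cala'_{|Y_i},\rho)$ has a canonical reduction multicurve $X_i$, transfers $(\calh'_{|Y_i},\rho)$-invariance of $X_i$ via Lemma~\ref{lemma:crs-normal}, and then argues (using the partial pseudo-Anosov on $\partial V\setminus c$) that $X_i\in\{\emptyset,c\}$, ruling out $X_i=\emptyset$ with Lemma~\ref{lemma:kida-ia}; you instead invoke Corollary~\ref{cor:reducibility-normal} and the argument of Lemma~\ref{lemma:uniqueness}, which are precisely the lemmas encapsulating those two steps, and then exhaust. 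The underlying ingredients and logic are identical.
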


\begin{proof}
Let $\cala'$ be an amenable subgroupoid of $\calg$ of infinite type which is contained in $\calh'$ and stably normal in $\calh'$. By Lemma~\ref{lemma:crs}, we can find a partition $Y=\sqcup_{i\in I}Y_i$ into at most countably many Borel subsets such that for every $i\in I$, the pair $(\cala'_{|Y_i},\rho)$ has a (possibly empty) canonical reduction set $\calc_i$. For every $i\in I$, we let $X_i$ be the (possibly empty) boundary multicurve of $\calc_i$. As $\cala'$ is stably normal in $\calh'$, up to refining the above partition, we can assume that for every $i\in I$, the set $\calc_i$ is $(\calh'_{|Y_i},\rho)$-invariant (Lemma~\ref{lemma:crs-normal}), and therefore so is the multicurve $X_i$. As $\calh$ is stably contained in $\calh'$, we will also assume up to refining the above partition once more that for every $i\in I$, one has $\calh_{|Y_i}\subseteq\calh'_{|Y_i}$. In particular $X_i$ is $(\calh_{|Y_i},\rho)$-invariant, and since $\rho$ takes its values in $\Mod^1(V)$, any curve component of the multicurve $X_i$ is actually $(\calh_{|Y_i},\rho)$-invariant. This implies that either $X_i=\emptyset$ or $X_i=c$ by Lemma~\ref{lemma:uniqueness}.

Let $i\in I$ be such that $Y_i$ has positive measure. If $X_i=\emptyset$, then as $\cala'$ is of infinite type and $\rho$ has trivial kernel, we deduce that $\calc_i=\emptyset$, i.e.\ $(\cala'_{|Y_i},\rho)$ is irreducible. Lemma~\ref{lemma:kida-ia} then implies that $\calh'_{|Y_i}$ is amenable, a contradiction. Therefore $X_i=c$, so the isotopy class of $c$ is $(\calh'_{|Y_i},\rho)$-invariant. As this is true for every $i\in I$ such that $Y_i$ has positive measure, we deduce that $\calh'$ is stably contained in $\calh$.
\end{proof}

We finally check Assertion~(e) from Definition~\ref{de:pnsep}.

\begin{lemma}
Let $\calg$ be a measured groupoid over a standard probability space $Y$, equipped with a strict action-type cocycle $\rho:\calg\to\Mod^1(V)$. Let $c$ be a nonseparating meridian, and let $\calh$ be the $(\calg,\rho)$-stabilizer of the isotopy class of $c$.

Then for every Borel subset $U\subseteq Y$ of positive measure, the groupoid $\calh_{|U}$ is not contained in any product-like subgroupoid of $\calg_{|U}$.  
\end{lemma}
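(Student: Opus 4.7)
The plan is to argue by contradiction. Suppose there is a Borel subset $U\subseteq Y$ of positive measure and a product-like subgroupoid $\calp$ of $\calg_{|U}$ containing $\calh_{|U}$, witnessed by stably normal subgroupoids $\calp_1,\calp_2\subseteq\calp$ and strongly Schottky pairs $(\cala_i^1,\cala_i^2)\subseteq\calp_i$, each $\cala_i^j$ stably normalized by $\calp_{3-i}$. I would run a canonical reduction set (CRS) analysis of the four amenable subgroupoids $\cala_i^j$, exploiting the key geometric fact that for $c$ nonseparating, the complement $\partial V\setminus c$ is connected and supports pseudo-Anosovs, leaving no room for a genuine two-factor decomposition.

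First, applying Lemma~\ref{lemma:crs} and partitioning $U$, I arrange that each $(\cala_i^j,\rho)$ has a CRS $\calc_i^j$ with boundary multicurve $X_i^j$. I claim $X_i^j$ is nonempty on every positive-measure piece: otherwise $(\cala_i^j,\rho)$ would be irreducible, and since $\calp_{3-i}$ stably normalizes $\cala_i^j$ and $\rho$ has trivial kernel (being action-type), Lemma~\ref{lemma:kida-ia} would force $\calp_{3-i}$ to be amenable, contradicting the everywhere nonamenability induced by the strongly Schottky pair $(\cala_{3-i}^1,\cala_{3-i}^2)$ it contains (see the remark after Definition~\ref{de:strong-schottky}). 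By Lemma~\ref{lemma:crs-normal}, each $X_i^j$ is then $(\calp_{3-i},\rho)$-invariant.

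Next, I invoke Lemma~\ref{lemma:pA-in-complement}, valid because the genus of $V$ is at least $3$ so that the connected surface $\partial V\setminus c$ is not a once-holed torus, to choose a nonabelian free subgroup $F=\langle\psi_1,\psi_2\rangle\subseteq\Stab_{\Mod^0(V)}(c)$ whose generators restrict to pseudo-Anosovs on $\partial V\setminus c$, and set $\calf=\rho^{-1}(F)_{|U}\subseteq\calh_{|U}\subseteq\calp$. By Lemma~\ref{lemma:free}, $\calf$ contains its own strongly Schottky pair, so every stably normal amenable subgroupoid of $\calf$ is stably trivial. For each $i$, the intersection $\calf\cap\calp_{3-i}$ is stably normal in $\calf$ (because $\calp_{3-i}$ is stably normal in $\calp\supseteq\calf$), hence is either stably trivial or nonamenable. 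In the nonamenable case, after restriction it contains a nonabelian free subgroup of elements acting as pseudo-Anosovs on $\partial V\setminus c$; the $(\calp_{3-i},\rho)$-invariance of $X_i^j$ restricts to invariance under these pseudo-Anosovs, and since pseudo-Anosovs on the connected surface $\partial V\setminus c$ fix no simple closed curve there, $X_i^j\subseteq\{c\}$, so $X_i^j=\{c\}$. The stably-trivial case requires a separate bisection-level argument, exploiting that $\calf$ embeds into $\calp$ with trivial intersection with the normal $\calp_{3-i}$. Once $X_i^j=\{c\}$ is established, the filled subsurface of $\calc_i^j$ has boundary $c$ and active component inside $\partial V\setminus c$; the essential image $\rho(\cala_i^j)$ is an infinite amenable subgroup of $\Stab_{\Mod^0(V)}(c)$ virtually generated by $T_c$ together with at most one partial pseudo-Anosov on a subsurface of $\partial V\setminus c$. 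The normalizer in $\Mod^0(V)$ of any such subgroup is virtually abelian — a product of $\langle T_c\rangle$ with the virtually cyclic stabilizer of a pseudo-Anosov lamination on $\partial V\setminus c$. Since $\calp_{3-i}$ stably normalizes $\cala_i^1$ and $\cala_i^2$, the image $\rho(\calp_{3-i})$ lies in such a virtually abelian group, making $\calp_{3-i}$ amenable and contradicting its strongly Schottky pair.

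The main obstacle is the stably-trivial case of $\calf\cap\calp_{3-i}$: groupoids admit no literal quotient $\calp/\calp_{3-i}$ and one cannot simply project $\calf$ into a factor. I expect one handles this by replacing $\cala_i^j$ with its $\calf$-conjugation orbit inside $\calp_i$ and checking — using the canonicity provided by Lemma~\ref{lemma:crs-normal} together with the amenability and infinite-type properties — that the resulting subgroupoid remains an amenable subgroupoid of $\calp_i$ stably normalized by $\calp_{3-i}$, so that its (canonical) CRS is simultaneously $(\calp_{3-i},\rho)$-invariant and $(\calf,\rho)$-invariant, reducing back to the analysis of the nonamenable case.
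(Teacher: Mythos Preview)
Your approach diverges from the paper's and carries two genuine gaps.

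First, the acknowledged ``stably-trivial'' case of $\calf\cap\calp_{3-i}$ is not just a technicality: your proposed fix via ``$\calf$-conjugation orbits'' does not obviously preserve amenability or the stable normalization by $\calp_{3-i}$, and there is no mechanism in the paper's toolkit for that. Second, even in the nonamenable case your step~6 conflates groupoid-level normalization with normalization of $\rho$-images. From ``$\calp_{3-i}$ stably normalizes $\cala_i^j$'' and Lemma~\ref{lemma:crs-normal} you only get that the canonical reduction \emph{set} $\calc_i^j$ is $(\calp_{3-i},\rho)$-invariant, i.e.\ $\rho(\calp_{3-i})\subseteq\Stab(\calc_i^j)$. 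When $\partial V\setminus c$ is active for $\cala_i^j$, one has $\calc_i^j=\{c\}$ and hence only $\rho(\calp_{3-i})\subseteq\Stab_{\Mod^0(V)}(c)$, which is far from virtually abelian; you never justify that $\rho(\calp_{3-i})$ normalizes the actual image $\rho(\cala_i^j)$.

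The paper's argument sidesteps both issues by working with the canonical reduction multicurve $X$ of $\calp$ itself (not of the $\cala_i^j$). Since $\calh_{|U}\subseteq\calp$, the multicurve $X$ is $(\calh_{|U},\rho)$-invariant, forcing $X\in\{\emptyset,c\}$; in either case one passes to the induced cocycle $\rho':\calp\to\Mod^0(\partial V\setminus X)$, whose kernel is amenable. The key device is then an \emph{irreducibility cascade} via Corollary~\ref{cor:reducibility-normal}: $(\calp,\rho')$ is irreducible, hence so is $(\calp_2,\rho')$ (stably normal in $\calp$), hence so is $(\cala_1^1,\rho')$ (stably normalized by $\calp_2$, once the strongly Schottky condition is used to kill the kernel of $\rho'$ on $\langle\cala_1^1,\cala_1^2\rangle$). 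Finally, the strongly Schottky property of $(\cala_2^1,\cala_2^2)$ yields a positive-measure subset on which $\rho'$ has trivial kernel restricted to $\langle\cala_2^1,\cala_2^2\rangle$; since this groupoid stably normalizes the amenable, $\rho'$-irreducible $\cala_1^1$, Lemma~\ref{lemma:kida-ia} forces it to be amenable, contradicting the Schottky property. This chain never needs to compare group-theoretic normalizers, nor to split into cases on $\calf\cap\calp_{3-i}$.
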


\begin{proof}
Let $U\subseteq Y$ be a Borel subset of positive measure. Assume towards a contradiction that $\calh_{|U}$ is contained in a product-like subgroupoid $\calp$ of $\calg_{|U}$. Let $\calp_1,\calp_2,\cala_1^1,\cala_1^2,\cala_2^1,\cala_2^2\subseteq\calp$ be as in the definition of a product-like subgroupoid (Definition~\ref{de:product-like}).

Up to restricting to a Borel subset of $U$ of positive measure, we can assume that $(\calp,\rho)$ has a canonical reduction multicurve $X$. As $\calh_{|U}\subseteq\calp$, the isotopy class of $X$ is $(\calh_{|U},\rho)$-invariant, and in fact every component curve of $X$ is $(\calh_{|U},\rho)$-invariant because we are working in $\Mod^1(V)$. So by Lemma~\ref{lemma:uniqueness}, $X=\emptyset$ or $X=c$. The induced cocycle $\rho':\calp\to\Mod^0(\partial V\setminus X)$ is well-defined after possibly restricting to a conull Borel subset of $X$, and its kernel is amenable (it is trivial if $X=\emptyset$, and contained in $\rho^{-1}(\langle T_c\rangle)$ if $X=c$). As $\calp$ is everywhere nonamenable, it follows that $\rho'(\calp_{|U^*})\neq\{1\}$ for every conull Borel subset $U^*\subseteq U$ (as otherwise $\calp_{|U^*}$ would be equal to the kernel and therefore amenable). In particular, the subsurface $\partial V\setminus X$ is active for $(\calp,\rho)$, and therefore $(\calp,\rho')$ is irreducible. As $\calp_2$ is everywhere nonamenable, we also have $\rho'((\calp_2)_{|U'})\neq\{1\}$ for every positive measure Borel subset $U'\subseteq U$. As $\calp_2$ is stably normal in $\calp$, Corollary~\ref{cor:reducibility-normal} therefore ensures that $(\calp_2,\rho')$ is also irreducible. 

By definition of a strongly Schottky pair (Definition~\ref{de:strong-schottky}, applied to $(\cala_1^1,\cala_1^2)$), there exists a Borel subset $U'\subseteq U$ of positive measure such that every normal amenable subgroupoid of $\langle (\cala_1^1)_{|U'},(\cala_1^2)_{|U'}\rangle$ is stably trivial. In particular, the kernel of $\rho'$ restricted to the subgroupoid $\langle (\cala_1^1)_{|U'}, (\cala_1^2)_{|U'}\rangle$ is stably trivial. As $\cala_1^1$ is of infinite type, it follows that for every Borel subset $U''\subseteq U'$ of positive measure, we have $\rho'((\cala_1^1)_{|U''})\neq\{1\}$. As $(\cala_1^1)_{|U'}$ is stably normalized by $(\calp_2)_{|U'}$, Corollary~\ref{cor:reducibility-normal} ensures that $((\cala_1^1)_{|U'},\rho')$ is irreducible. 

By definition of a strongly Schottky pair (applied to $(\cala_2^1,\cala_2^2)$), there exists a Borel subset $W\subseteq U'$ of positive measure such that every normal amenable subgroupoid of $\langle(\cala_2^1)_{|W},(\cala_2^2)_{|W}\rangle$ is stably trivial. In particular, the kernel of $\rho'$ restricted to the subgroupoid $\langle(\cala_2^1)_{|W},(\cala_2^2)_{|W}\rangle$ is stably trivial. This implies that we can find a positive measure Borel subset $W'\subseteq W$ such that $\rho'$ has trivial kernel in restriction to $\langle (\cala_2^1)_{|W'},(\cala_2^2)_{|W'}\rangle$. As $(\cala_1^1)_{|W'}$ is stably normalized by $\langle (\cala_2^1)_{|W'}, (\cala_2^2)_{|W'}\rangle$, it thus follows from Lemma~\ref{lemma:kida-ia} that $\langle (\cala_2^1)_{|W'}, (\cala_2^2)_{|W'}\rangle$ is amenable, which yields the desired contradiction. 
\end{proof}

\subsection{Stabilizers of separating meridians do not satisfy Property~$\pnsep$}

\begin{lemma}\label{lemma:stab-non-unilateral}
Let $\calg$ be a measured groupoid, equipped with a strict action-type cocycle $\rho:\calg\to\Mod^1(V)$, and let $\calh$ be a measured subgroupoid of $\calg$. Let $c$ be a separating meridian, and assume that the isotopy class of $c$ is $(\calh,\rho)$-invariant. 

Then $\calh$ does not satisfy Property~$\pnsep$.
\end{lemma}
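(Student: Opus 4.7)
The plan is to derive a contradiction by showing that at least one of conditions~(d) or~(e) of Property~$\qnsep$ must fail for $\calh$, splitting the argument according to the genera $g_1,g_2$ of the two components $S_1,S_2$ of $\partial V\setminus c$; since the genus of $V$ is at least $3$, we have $g_1+g_2\geq 3$ with $g_i\geq 1$, so at most one $g_i$ equals $1$.

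\textbf{Case 1: both $g_1,g_2\geq 2$.} Here I would construct a product-like subgroupoid $\calp$ of $\calg$ containing $\calh$, violating condition~(e). Take $\calp$ to be the $(\calg,\rho)$-stabilizer of $c$ (which contains $\calh$ by hypothesis), and for $i\in\{1,2\}$ let $A_i\subseteq\Stab_{\Mod^0(V)}(c)$ be the kernel of the restriction homomorphism to $\Mod^0(S_{3-i})$, setting $\calp_i=\rho^{-1}(A_i)$. Since $A_1$ and $A_2$ have disjoint supports, they commute and are both normal in $\Stab_{\Mod^0(V)}(c)$. After partitioning each bisection into countably many sub-bisections of constant $\rho$-value, this translates directly into stable normality of each $\calp_i$ in $\calp$, and stable normalization of subgroupoids inside $\calp_i$ by $\calp_{3-i}$. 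To exhibit a strongly Schottky pair inside each $\calp_i$, I apply Lemma~\ref{lemma:pA-in-complement} on each $S_i$ (not a once-holed torus) to obtain handlebody elements supported on $S_i$ generating a nonabelian free subgroup; after passing to a common power, these elements lie in $A_i$ and still generate a nonabelian free subgroup, and Lemma~\ref{lemma:free} yields the required strongly Schottky pair $(\rho^{-1}(\langle g_i^1\rangle), \rho^{-1}(\langle g_i^2\rangle))\subseteq\calp_i$. Thus $\calp$ is product-like, contradicting condition~(e) applied to $\calh\subseteq\calp$.

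\textbf{Case 2: one component, say $S_1$, is a once-holed torus.} Now $A_1$ is abelian and the previous construction fails, so I plan to violate condition~(d) instead. By Lemma~\ref{lem:meridian-stab-containment}, $S_1$ contains a unique nonseparating meridian $d_X$, and $\Stab_{\Mod(V)}(c)\subseteq\Stab_{\Mod(V)}(d_X)$. Letting $\calh'$ be the $(\calg,\rho)$-stabilizer of $d_X$, we have $\calh\subseteq\calh'$ stably, and by Lemma~\ref{lemma:property-a} the groupoid $\calh'$ is everywhere nonamenable and contains the stably normal amenable subgroupoid $\rho^{-1}(\langle T_{d_X}\rangle)$ of infinite type; so $\calh'$ is an admissible test subgroupoid for condition~(d). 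It remains to show that $\calh$ and $\calh'$ are not stably equal. Applying Lemma~\ref{lemma:pA-in-complement} to $d_X$ (whose complementary subsurface has genus $g-1\geq 2$) provides $\psi\in\Stab_{\Mod^0(V)}(d_X)$ restricting to a pseudo-Anosov on $\partial V\setminus d_X$; since $c\subset\partial V\setminus d_X$ is non-peripheral there, $\psi^k(c)\neq c$ for every $k\neq 0$, so $\langle\psi\rangle\cap\Stab_{\Mod^0(V)}(c)=\{e\}$. Because $\rho$ is action-type with trivial kernel, $\rho^{-1}(\langle\psi\rangle)_{|U}$ is of infinite type on every positive-measure $U$, and every non-unit arrow in it has $\rho$-image outside $\Stab_{\Mod^0(V)}(c)$, hence lies in $\calh'_{|U}\setminus\calh_{|U}$. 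Therefore $\calh_{|U}\subsetneq\calh'_{|U}$ on every positive-measure $U$, which prevents stable equality and contradicts condition~(d).

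\textbf{Main obstacle.} I expect the main technical step to be in Case~1, namely translating the algebraic facts that $A_1$ and $A_2$ commute and are normal in $\Stab_{\Mod^0(V)}(c)$ into the bisection-level $B$-invariance conditions required by Definitions~\ref{de:product-like} and~\ref{de:strong-schottky}. The remedy is to refine each bisection so that the cocycle $\rho$ is constant on each sub-piece, after which the required $B$-invariance becomes an immediate consequence of the corresponding centralizing and normality relations inside $\Mod^0(V)$. Case~2 is conceptually simpler, reducing to the strict containment $\Stab(c)\subsetneq\Stab(d_X)$ supplied by Lemma~\ref{lem:meridian-stab-containment}, combined with the action-type hypothesis which guarantees the extra arrows are seen on every positive-measure Borel subset.
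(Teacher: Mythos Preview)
Your proposal is correct and follows essentially the same approach as the paper: the same case split on whether a complementary component of $c$ is a once-holed torus, violating condition~(d) via the unique nonseparating meridian $d_X$ in that case, and violating condition~(e) by exhibiting the $(\calg,\rho)$-stabilizer of $c$ as a product-like subgroupoid in the other case, with the strongly Schottky pairs coming from Lemma~\ref{lemma:free} applied to free subgroups supported on each side. The only remark is that what you flag as the ``main obstacle'' (passing from normality and commutation in $\Mod^0(V)$ to bisection-level $B$-invariance) is entirely routine once bisections are refined to have constant $\rho$-value, and the paper does not comment on it; the genuine content lies in the topological input from Lemmas~\ref{lemma:pA-in-complement} and~\ref{lem:meridian-stab-containment}, which you invoke correctly.
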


\begin{proof}
We first assume that one complementary component $\Sigma$ of $c$ is a once-holed torus. Then $\Sigma$ contains, up to isotopy, a unique nonseparating meridian $d$ (Lemma~\ref{lem:meridian-stab-containment}), so $\calh$ is contained in the $(\calg,\rho)$-stabilizer $\calh'$ of the isotopy class of $d$. In addition $\calh'$ is everywhere nonamenable and contains $\rho^{-1}(\langle T_d\rangle)$ as a normal amenable subgroupoid of infinite type. Finally $\calh'$ is not stably contained in $\calh$ because $\partial V\setminus d$ supports a pseudo-Anosov handlebody group element $g$ (Lemma~\ref{lemma:pA-in-complement}), and no nontrivial power of $g$ preserves the isotopy class of $c$. So Assumption~(d) from Definition~\ref{de:pnsep} fails. 

We now assume that both complementary components $\Sigma_1,\Sigma_2$ of $c$ have genus at least $2$. Let $\calp$ be the $(\calg,\rho)$-stabilizer of $c$. Then $\calh$ is contained in $\calp$ (up to restricting to a conull Borel subset of the base space $Y$), and we will prove that $\calp$ is product-like (which will imply that Assumption~(e) from Definition~\ref{de:pnsep} fails). For every $i\in\{1,2\}$, let $P_i$ be the subgroup of $\Mod^1(V)$ made of elements that have a representative supported in $\Sigma_i$, and let $\calp_i=\rho^{-1}(P_i)$. Then $\calp_i$ is normal in $\calp$. For every $i\in\{1,2\}$, let $f_i^1$ and $f_i^2$ be two elements of $P_i$ that generate a nonabelian free subgroup of $\Mod^1(V)$, see e.g.\ Lemma~\ref{lemma:pA-in-complement} for their existence. For every $i\in\{1,2\}$ and every $j\in\{1,2\}$, let $\cala_i^j=\rho^{-1}(\langle f_i^j\rangle)$. Then $\cala_i^j$ is normalized by $\calp_{3-i}$, and Lemma~\ref{lemma:free} ensures that $(\cala_i^1,\cala_i^2)$ is a strongly Schottky pair of subgroupoids of $\calg$. This completes our proof. 
\end{proof}

\subsection{Admissible decorated multicurves and their active subgroups}\label{sec:hermitages}

A \emph{decorated multicurve} is a pair $(X,\mathfrak{A})$, where $X$ is a multicurve on $\partial V$, and $\mathfrak{A}$ is a subset of the set of complementary components of $X$ in $\partial V$. We make the following definition, which uses the restriction homomorphisms for rotationless mapping classes; compare Section~\ref{sec:bg-stab}.
\begin{de}\label{de:hermitage}
  Let $(X,\mathfrak{A})$ be a decorated multicurve. The \emph{active
    subgroup $A$ of $(X,\mathfrak{A})$} is the maximal subgroup of
  $\Mod^1(V)$ satisfying
  \begin{enumerate}[a)]
  \item each $a \in A$ preserves $X$.
  \item for each complementary component $S$ of $A$ which is not contained in $\mathfrak{A}$, the image
    of the restriction homomorphism
    \[ A \to \Mod(S) \]
    is trivial. 
  \end{enumerate}
  The decorated multicurve $(X,\mathfrak{A})$ is \emph{admissible} if
  its active subgroup $A$ is amenable, $X$ is the canonical reduction multicurve
  of $A$, and $\mathfrak{A}$ is its set of active complementary
  components.
\end{de}

We give the simplest example of this definition, which suffices for
our purposes.  Suppose that $X = \{\delta\}$ is a single meridian
$\delta$ on $\partial V$, and put $\mathfrak{A}=\emptyset$. Then the
active subgroup of $(X,\mathfrak{A})$ is generated by a power of the
twist $T_\delta$. In particular $(X,\mathfrak{A})$ is admissible.

However, if the genus of $V$ is at least $3$, then with no choice of
$\mathfrak{A} \neq \emptyset$ do we obtain an admissible decorated
multicurve, since in that case the active subgroup will always contain
a nonabelian free group.

Similarly, if $X = \{\alpha_1, \alpha_2\}$ is an annulus pair, then
$(X, \emptyset)$ is admissible, for the same reason as in the meridian
case.

\bigskip Let $\calg$ be a measured groupoid over a standard probability space $Y$, equipped with an action-type cocycle $\rho:\calg\to\Mod^1(V)$. We say that a pair $(\calh,\cala)$ of subgroupoids of $\calg$ is \emph{admissible} with respect to $\rho$ if there exist a conull Borel subset $Y^*\subseteq Y$ and a partition $Y^*=\dunion_{i\in I}Y_i$ into at most countably many Borel subsets, such that for every $i\in I$, there exist a multicurve $X_i$ on $\partial V$, and a subset $\mathfrak{A}_i$ of the set of all complementary components of $X_i$ such that $(X_i,\mathfrak{A}_i)$ is admissible, $\calh_{|Y_i}$ is equal to the $(\calg_{|Y_i},\rho)$-stabilizer of the isotopy class of $X_i$, and denoting by $A_i\subseteq\Mod^1(V)$ the active subgroup of $(X_i,\mathfrak{A}_i)$, one has $\cala_{|Y_i}=\rho^{-1}(A_i)_{|Y_i}$. Notice that, although the above partition is not unique (one can always pass to a further partition), the map sending $y\in Y_i$ to the isotopy class of $(X_i,\mathfrak{A}_i)$ is uniquely determined by $(\calh,\cala)$, up to changing its value on a conull Borel subset (indeed $X_i$ is recovered as the canonical reduction multicurve of $(\cala_{|Y_i},\rho)$, and $\mathfrak{A}_i$ as its active subsurface). We call it the \emph{decomposition map} of $(\calh,\cala)$.

\begin{lemma}\label{lemma:product-like}
Let $\calg$ be a measured groupoid over a standard probability space $Y$, equipped with a strict action-type cocycle $\rho:\calg\to\Mod^1(V)$. Let $\cala,\calh$ be measured subgroupoids of $\calg$, with $\cala\subseteq\calh$. 

If $(\calh,\cala)$ satisfies Property~$\qnsep$, then $(\calh,\cala)$ is an admissible pair.
\end{lemma}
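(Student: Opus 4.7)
The plan is to construct the decorated multicurve $(X_i, \mathfrak{A}_i)$ on each piece of a suitable Borel partition of $Y$ via the canonical reduction machinery of the previous subsection. Applying Lemma~\ref{lemma:crs} to $(\cala, \rho)$ and refining using the stable normality of $\cala$ in $\calh$ together with Lemma~\ref{lemma:crs-normal}, we obtain $Y = \dunion_{i\in I} Y_i$ such that on each $Y_i$, $(\cala_{|Y_i}, \rho)$ has a canonical reduction set $\calc_i$ which is moreover $(\calh_{|Y_i}, \rho)$-invariant. Let $X_i$ be its boundary multicurve, and let $\mathfrak{A}_i$ be its set of active complementary components. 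The case $X_i = \emptyset$ (on a positive-measure piece) would mean that $(\cala_{|Y_i}, \rho)$ is irreducible; combined with $\calh_{|Y_i}$ stably normalizing $\cala_{|Y_i}$ and $\rho_{|\calh}$ having trivial kernel (as $\rho$ is action-type), Lemma~\ref{lemma:kida-ia} would force $\calh_{|Y_i}$ to be amenable, contradicting condition~(a) of $\qnsep$. Hence $X_i$ is nonempty on every positive-measure piece.

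Setting $A_i \subseteq \Mod^0(V)$ to be the active subgroup of $(X_i, \mathfrak{A}_i)$, the containment $\cala_{|Y_i} \subseteq \rho^{-1}(A_i)_{|Y_i}$ holds by the very definition of active/inactive components. To go in the reverse direction and identify $\calh$ with the $\rho$-preimage of the full stabilizer of $X_i$, I apply condition~(d): the groupoid $\calh' := \rho^{-1}(\Stab_{\Mod^0(V)}(X_i))$ is everywhere nonamenable (since $\Stab_{\Mod^0(V)}(X_i)$ contains a nonabelian free subgroup by Lemma~\ref{lemma:pA-in-complement} in genus at least $3$, via Lemma~\ref{lemma:free}), contains $\rho^{-1}(T_{X_i})$ as a stably normal amenable subgroupoid of infinite type (where $T_{X_i}$ is the multitwist group about $X_i$), and stably contains $\calh_{|Y_i}$; hence $\calh_{|Y_i}$ is stably equal to $\calh'_{|Y_i}$.

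The heart of the argument — and the main technical obstacle — is to prove that $A_i$ is amenable. Once this is established, $\rho^{-1}(A_i)$ is amenable (preimages of amenable subgroups under action-type cocycles are amenable) and stably normal in $\calh$ (since $A_i$ is then visible as a normal subgroup of $\rho(\calh_{|Y_i}) = \Stab(X_i)$, using the previous step), so condition~(c) yields $\rho^{-1}(A_i)_{|Y_i} \subseteq \cala_{|Y_i}$ and the desired equality follows. To prove amenability of $A_i$, one argues by contradiction: a non-amenable $A_i$ would contain two elements generating a nonabelian free subgroup, producing via Lemma~\ref{lemma:free} a strongly Schottky pair in $\rho^{-1}(A_i) \subseteq \calh$. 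One then analyses where this non-amenability lives: if it can be split across two disjoint active components (or more generally across two commuting normal subsystems of $\Stab(X_i)$), one assembles a product-like subgroupoid of $\calg$ containing $\calh$, contradicting condition~(e); if the non-amenability is concentrated on a single active component $S$, a more delicate argument exploits the canonicity of $\calc_i$ together with conditions~(c) and~(d) to derive the desired contradiction. This case analysis, especially the single-component case, is where the bulk of the work lies.

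Finally, the admissibility of $(X_i, \mathfrak{A}_i)$ — namely, that $X_i$ is the boundary multicurve of $A_i$ and $\mathfrak{A}_i$ is its set of active complementary components — follows from the observation that once $A_i$ is amenable and equal to $\rho(\cala_{|Y_i})$, its canonical reduction set is precisely $X_i$ together with every essential simple closed curve in an inactive component, so the filled subsurface and its active complementary components recover exactly $(X_i, \mathfrak{A}_i)$.
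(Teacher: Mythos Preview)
Your overall plan matches the paper's, but there is a genuine gap in the order of the argument, and your case analysis for the amenability of $A_i$ is not the right one.

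The application of condition~(d) using the multitwist group $T_{X_i}$ is premature: the multitwist group about $X_i$ may intersect $\Mod^0(V)$ trivially (this happens whenever $X_i$ contains no meridian and no annulus pair, a possibility you have not yet excluded), so $\rho^{-1}(T_{X_i})$ need not be of infinite type. This is exactly the subtlety flagged in the paper's introduction. The paper therefore \emph{first} proves that the active subgroup $A$ is amenable, and only \emph{then} applies~(d), using $\rho^{-1}(A)$ as the required normal amenable subgroupoid of infinite type in $\calh'=\rho^{-1}(\Stab(X))$; the group $A$ is automatically infinite since it contains the $\rho$-image of the infinite-type groupoid $\cala$. (Incidentally, your appeal to Lemma~\ref{lemma:pA-in-complement} for the everywhere-nonamenability of $\calh'$ is misplaced, since $X_i$ need not be a meridian; but this is harmless as $\calh'\supseteq\calh$ and (a) applies.)

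For the amenability of $A_i$, note that $A_i$ acts trivially on inactive components, so its nonamenability always lives on the active side; ``one versus several active components'' is not the relevant dichotomy. The paper instead lets $\calb$ be the kernel of the restriction cocycle $\rho_1:\calh\to\Mod^0(\Sigma_1)$ to the union $\Sigma_1$ of active subsurfaces, and splits on whether $\calb$ is amenable. If $\calb$ is nonamenable, the subgroup $P_2\subseteq\Stab(X_i)$ acting trivially on the \emph{active} side contains a nonabelian free group, and together with $P_1=A_i$ (which does too, by hypothesis) one assembles a product-like structure on $\rho^{-1}(\Stab(X_i))\supseteq\calh$, contradicting~(e). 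If $\calb$ is amenable, condition~(c) gives $\calb\subseteq\cala$ on a positive-measure piece; since elements of $\calb$ act trivially on active components while elements of $\cala$ act trivially on inactive ones, such elements are multitwists about $X_i$, and the nowhere-triviality of $\calb$ (itself obtained from Lemma~\ref{lemma:kida-ia} and~(a)) now shows the twist group is infinite. Only at this point can~(d) be invoked to identify $\calh$ with $\rho^{-1}(\Stab(X_i))$ on a piece, after which one reaches a contradiction by observing that the $\rho$-preimage of a free subgroup of $A_i$ normalizes $\cala$, has trivial $\rho_1$-kernel, yet is nonamenable, against Lemma~\ref{lemma:kida-ia}.
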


\begin{proof}
By Assumption~(b) from Definition~\ref{de:pnsep}, the groupoid $\cala$ is amenable, of infinite type, and stably normal in $\calh$. Up to a countable partition of the base space $Y$, we will assume that $\cala$ is normal in $\calh$. Up to a further partition, we can also assume that $(\cala,\rho)$ has a canonical reduction set $\calc$ (Lemma~\ref{lemma:crs}). Let $X$ be the boundary multicurve of $\calc$, let $\mathfrak{A}$ be the set of all active complementary components for $(\cala,\rho)$, and let $\overline{\mathfrak{A}}$ be the set of all complementary components of $X$ not in $\mathfrak{A}$. Up to replacing $Y$ by a conull Borel subset, we will assume using Lemma~\ref{lemma:crs-normal} that $\rho(\calh)\subseteq\Stab_{\Mod^1(V)}(X)$.

We will first prove that $(X,\mathfrak{A})$ is admissible, so let us assume towards a contradiction that it is not. Let $A\subseteq\Mod^1(V)$ be the active subgroup of $(X,\mathfrak{A})$. Then there exists a conull Borel subset $Y^*\subseteq Y$ such that $\rho(\cala_{|Y^*})\subseteq A$. Therefore $\calc$ is exactly the set of all curves whose isotopy class is $A$-invariant, so $X$ is the canonical reduction multicurve of $A$ and $\mathfrak{A}$ is its set of active complementary components. Therefore, our assumption that $(X,\mathfrak{A})$ is not admissible implies that $A$ is not amenable, so it contains a nonabelian free subgroup $F$ (by the Tits alternative for mapping class groups \cite{McCa,Iva-book}).
 
Let $\Sigma_1$ be the union of all subsurfaces in $\mathfrak{A}$, viewed as a (possibly disconnected) surface of finite type. Let $\rho_1:\calh\to\Mod^0(\Sigma_1)$ be the cocycle obtained by composing $\rho$ with the restriction to $\Sigma_1$. We now observe that for every $U\subseteq Y$ of positive measure, the restriction to $U$ of the kernel of $\rho_1$  is nontrivial: otherwise, as $(\cala_{|U},\rho_1)$ is irreducible and $\calh_{|U}$ normalizes $\cala_{|U}$, Lemma~\ref{lemma:kida-ia} ensures that $\calh_{|U}$ is amenable, a contradiction to Assumption~(a) from Definition~\ref{de:pnsep}. 

Let $\calb$ be the kernel of $\rho_1$. The groupoid $\calb$ is normal in $\calh$. We first assume that $\calb$ is amenable, and reach a contradiction in this case. Assumption~(c) from Definition~\ref{de:pnsep} ensures that there exists a Borel subset $U\subseteq Y$ of positive measure such that $\calb_{|U}\subseteq\cala_{|U}$. But the $\rho$-image of every element of $\calb_{|U}$ acts trivially on all components in $\mathfrak{A}$, while the $\rho$-image of every element of $\cala_{|U}$ acts trivially on all components in $\overline{\mathfrak{A}}$. It follows that for every $g\in\calb_{|U}$, the element $\rho(g)$ is a multitwist around curves in $X$. As $\rho$ has trivial kernel and $\calb_{|U}$ is nontrivial, it follows that the subgroup $\mathrm{Tw}$ of $\Mod^1(V)$ consisting of all multitwists about the curves in $X$ is infinite. Let $\calh'=\rho^{-1}(\Stab_{\Mod^1(V)}(X))$. Then $\calh_{|U}\subseteq\calh'_{|U}$, and $\calh'_{|U}$ is everywhere nonamenable (it contains $\calh_{|U}$) and contains $\rho^{-1}(\mathrm{Tw})_{|U}$ as a normal amenable subgroupoid of infinite type. So Assumption~(d) from Definition~\ref{de:pnsep} ensures that there exists a Borel subset $U'\subseteq U$ of positive measure such that $\calh'_{|U'}=\calh_{|U'}$. Now, the groupoid $\rho^{-1}(F)_{|U'}$ is contained in $\calh_{|U'}$, so it normalizes $\cala_{|U'}$, and up to changing $U'$ to a positive measure subset, $\rho_1$ has trivial kernel in restriction to $\rho^{-1}(F)_{|U'}$ (this uses Lemma~\ref{lemma:free} and the fact that the kernel of $\rho_1$ is a normal amenable subgroupoid). As $(\cala_{|U'},\rho_1)$ is irreducible, Lemma~\ref{lemma:kida-ia} implies that $\rho^{-1}(F)_{|U'}$ is amenable, a contradiction to Lemma~\ref{lemma:free}.  
   
We now assume that $\calb$ is nonamenable, and also reach a contradiction in this case. As $\rho$ has trivial kernel, the subgroup $P_2$ of $\Mod^1(V)$ made of all elements that fix the isotopy class of $X$ and act trivially on all connected components in $\mathfrak{A}$ is nonamenable, and therefore contains a nonabelian free subgroup. Let $P=\Stab_{\Mod^1(V)}(X)$, and let $\calp=\rho^{-1}(P)$ (i.e.\ $\calp=\calh'$ with the notation from above). We will now reach a contradiction to Assumption~(e) from Definition~\ref{de:pnsep} by proving that $\calp$ is a product-like subgroupoid of $\calg$ (in which $\calh$ is contained). 

Let $P_1\unlhd P$ be the normal subgroup made of all elements of $P$ that act trivially on all components in $\overline{\mathfrak{A}}$ (i.e.\ $P_1=A$), and recall that $P_2\unlhd P$ is the normal subgroup made of all elements of $P$ acting trivially on all components in $\mathfrak{A}$. Then $\calp_i=\rho^{-1}(P_i)$ is normal in $\calp=\rho^{-1}(P)$ for every $i\in\{1,2\}$. Notice that $P_1$ contains the nonabelian free subgroup $F$, and we saw in the previous paragraph that $P_2$ also contains a nonabelian free subgroup. For every $i\in\{1,2\}$, let $A_i^1,A_i^2$ be two cyclic subgroups of $P_i$ that generate a nonabelian free subgroup, and for $j\in\{1,2\}$, let $\cala_i^j=\rho^{-1}(A_i^j)$. As $P_1$ and $P_2$ centralize each other, it follows that each $\cala_i^j$ is normalized by $\calp_{3-i}$. In addition, Lemma~\ref{lemma:free} ensures that $(\cala_i^1,\cala_i^2)$ is a strongly Schottky pair of subgroupoids of $\calg$. So $\calp$ is a product-like subgroupoid of $\calg$, which is the desired contradiction. 

This contradiction shows that $(X,\mathfrak{A})$ is admissible. Now, let $\cala'=\rho^{-1}(A)$, and let $\calh'$ be the $(\calg,\rho)$-stabilizer of the isotopy class of $X$. Then $\calh$ is contained in $\calh'$, and $\calh'$ contains $\cala'$ as a normal amenable subgroupoid of infinite type. So Assertion~(d) from Definition~\ref{de:pnsep} ensures that $\calh$ is stably equal to $\calh'$. And Assertion~(c) then implies that $\cala$ is stably equal to $\cala'$. This proves that $(\calh,\cala)$ is an admissible pair.
\end{proof}

\subsection{Compatibility}

Two decorated multicurves $(X,\mathfrak{A})$ and $(X',\mathfrak{A}')$ are \emph{compatible} if $X$ and $X'$ are disjoint up to isotopy, and given any two components $S\in\mathfrak{A}$ and $S'\in\mathfrak{A}'$, either $S$ and $S'$ are isotopic, or they are disjoint up to isotopy. We start with the following observation. 

\begin{lemma}\label{lemma:hermitage-disjoint}
Let $(X,\mathfrak{A})$ and $(X',\mathfrak{A}')$ be two admissible decorated multicurves, with respective active subgroups $A,A'$.

If $(X,\mathfrak{A})$ and $(X',\mathfrak{A}')$ are compatible, then $\langle A,A'\rangle$ is amenable.
\end{lemma}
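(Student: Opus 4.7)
The plan is to show that $\langle A,A'\rangle$ preserves the multicurve $Y = X \cup X'$ (taken with disjoint representatives), and then to analyze the restriction map $\langle A,A'\rangle \to \prod_R \Mod(R)$, where $R$ ranges over the connected components of $\partial V \setminus Y$.

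The key preliminary observation comes from admissibility of $(X',\mathfrak{A}')$: for every $\gamma \in X'$, at least one of the two complementary components of $\partial V \setminus X'$ adjacent to $\gamma$ lies in $\mathfrak{A}'$. Indeed, $X'$ is the boundary of the fill $S'_{fill}$ of the canonical reduction set of $A'$; if both adjacent components of $\gamma$ were inactive, both would be contained in $S'_{fill}$ and $\gamma$ would lie in the interior of $S'_{fill}$ rather than on $\partial S'_{fill} = X'$, a contradiction.

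Combined with compatibility, this yields that $A$ preserves every curve $\gamma \in X'$. The nontrivial case is $\gamma \in X'\setminus X$ lying in the interior of a complementary component $T$ of $X$. If $T\notin\mathfrak{A}$, then $A$ acts trivially on $T$ and fixes $\gamma$. If $T\in\mathfrak{A}$, pick $S'\in\mathfrak{A}'$ adjacent to $\gamma$; a collar of $\gamma$ on the $S'$-side lies in $S'\cap T$, so $S'\cap T\neq\emptyset$ and compatibility forces $S'$ isotopic to $T$. But then $\gamma\in\partial S' = \partial T \subseteq X$, contradicting $\gamma\notin X$. By symmetry $A'$ preserves $X$, so $\langle A,A'\rangle$ preserves $Y$. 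The same argument shows that whenever $T\in\mathfrak{A}$ there is no curve of $X'$ in $\mathrm{int}(T)$, so $T$ coincides with a full component $R$ of $\partial V\setminus Y$, and symmetrically for components in $\mathfrak{A}'$.

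Now, for each $R\subseteq \partial V\setminus Y$, let $T$ and $T'$ be the components of $\partial V\setminus X$ and $\partial V\setminus X'$ containing $R$. If both $T\in\mathfrak{A}$ and $T'\in\mathfrak{A}'$, compatibility forces $T = T' = R$, and both $A|_R$ and $A'|_R$ are contained in the subgroup of $\Mod(R)$ consisting of restrictions of handlebody classes supported on $R$, which is amenable by admissibility of either decorated multicurve. In all remaining cases at least one of $A|_R, A'|_R$ is trivial (whenever the corresponding containing component is inactive) while the other is amenable. Hence $\langle A,A'\rangle|_R$ is amenable for every $R$, so the image of the restriction map $\langle A,A'\rangle\to \prod_R\Mod(R)$ is an amenable subgroup of a finite product of amenable groups; its kernel is contained in the (abelian) multitwist subgroup about $Y$. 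Therefore $\langle A,A'\rangle$ is an extension of an amenable group by an abelian group, hence amenable. The main obstacle is the second paragraph, where admissibility and compatibility have to be combined to rule out the configuration in which a curve of $X'$ enters an active component of $(X,\mathfrak{A})$; this is the only place where compatibility enters in an essential way.
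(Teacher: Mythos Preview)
Your approach is different from the paper's (which argues by contradiction via the Tits alternative and analyzes the restriction to the common active subsurfaces $\mathfrak{A}\cap\mathfrak{A}'$), and the overall strategy of reducing to the components of $\partial V\setminus(X\cup X')$ is reasonable. However, the argument breaks at the step you yourself flag as the main obstacle.

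Your ``key preliminary observation'' is false. Take $(X',\mathfrak{A}')=(\{c\},\emptyset)$ for a nonseparating meridian $c$; the paper explicitly lists this as an admissible decorated multicurve right after Definition~\ref{de:hermitage}. Here $A'$ is the twist group about $c$, no complementary component lies in $\mathfrak{A}'=\emptyset$, and yet $c\in X'$. The flaw in your justification is that the fill $S'_{\mathrm{fill}}$ of the canonical reduction set of $A'$ is \emph{not} the union of the inactive components: it is the complement of the annulus around $c$, so $c$ sits on $\partial S'_{\mathrm{fill}}$ without either side being active. More generally, a curve of $X'$ can arise purely as a ``twist curve'' of $A'$ with both adjacent components inactive.

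This breaks your Case~2 analysis. With $\gamma\in X'\setminus X$ lying in some $T\in\mathfrak{A}$, you can no longer produce an $S'\in\mathfrak{A}'$ adjacent to $\gamma$, so compatibility gives you nothing and you cannot conclude that $A$ fixes $\gamma$. Consequently the claim that $\langle A,A'\rangle$ preserves $X\cup X'$ is unproven, and the rest of your argument (which relies on restricting to components of $\partial V\setminus(X\cup X')$) does not get off the ground. The paper sidesteps this entirely: rather than trying to find a common invariant multicurve, it shows that if $\langle A,A'\rangle$ contained a nonabelian free group then its restriction to $\bigcup(\mathfrak{A}\cap\mathfrak{A}')$ would already contain one, forcing a nonabelian free subgroup inside the amenable group $A\cap A'$.
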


\begin{proof}
  
  Let $\Sigma$ be the union of all subsurfaces in $\mathfrak{A}$ and
  all annuli around curves in $X$. Let $\Sigma'$ be the union of all
  subsurfaces in $\mathfrak{A}'$ and all annuli around curves in
  $X'$. Let $S$ be the union of all subsurfaces in
  $\mathfrak{A}\cap\mathfrak{A}'$.

  We therefore have containments
  $\langle A,A'\rangle_{|\Sigma\setminus S}\subseteq
  A_{|\Sigma\setminus S}$ and
  $\langle A,A'\rangle_{|\Sigma'\setminus S}\subseteq
  A'_{\Sigma'\setminus S}$. Since $A, A'$ are amenable by
  admissability, all these restrictions are amenable. By
  Lemma~\ref{lem:ivanov}, they are in fact abelian.

  Let $B=\langle A,A'\rangle_{|S}$, a subgroup of $\Mod(S)$. We
  claim that $B$ is amenable. Indeed, every element in $B$ can be
  written as the restriction to $S$ of an element of $\Mod(V)$ of the
  form $\varphi\varphi'\psi$, where $\varphi\in\Mod(\partial V)$ is
  supported on $\Sigma\setminus S$, where
  $\varphi'\in\Mod(\partial V)$ is supported on $\Sigma'\setminus S$,
  and $\psi\in\Mod(\partial V)$ is supported on $S$. Since
  $\Sigma\setminus S$ and $\Sigma'\setminus S$ can be realized
  disjointly, the commutator of any two elements of this
  form is an element of $\Mod(V)$ that acts trivially on
  $\Sigma\setminus S$ and on $\Sigma'\setminus S$. In other words, we
  have proved that every element in $[B,B]$ is the restriction of a
  handlebody element supported on $S$, and therefore contained in
  $A\cap A'$. By admissibility, $A\cap A'$ is amenable, so $[B,B]$ is
  amenable, and therefore $B$ is amenable.

  Now the map
  $\varphi\mapsto (\varphi_{|S},\varphi_{|\Sigma\setminus
    S},\varphi_{|\Sigma'\setminus S})$ determines a homomorphism from
  $\langle A,A'\rangle$ to
  $B\times A_{|\Sigma\setminus S}\times A'_{|\Sigma'\setminus S}$,
  with abelian kernel (consisting of multitwists), so
  $\langle A,A'\rangle$ is amenable.
%
\end{proof}

The converse is also true.
\begin{lemma}\label{lemma:hermitage-nondisjoint}
  Let $(X,\mathfrak{A})$ and $(X',\mathfrak{A}')$ be two admissible decorated
  multicurves, with respective active subgroups $A,A'$. 

  If $(X,\mathfrak{A})$ and $(X',\mathfrak{A}')$ are not compatible,
  then $\langle A,A'\rangle$ contains a nonabelian free group, and is
  in particular non-amenable.
\end{lemma}

\begin{proof}  
  Since $(X,\mathfrak{A})$ and $(X',\mathfrak{A}')$ are not
  compatible, either $X, X'$ are not disjoint, or there are components
  $S \in \mathfrak{A}, S' \in \mathfrak{A}'$ which are neither equal
  nor disjoint (up to isotopy).

  First suppose that $X, X'$ are not disjoint. Since $X$ (respectively
  $X'$) is the canonical reduction system for $A$ (respectively $A'$),
  this gives infinite order elements $a \in A, a' \in A'$ no powers of
  which commute (since their canonical reduction systems intersect,
  see Lemma~\ref{lem:commute-disjoint-crs}).

  Similarly, if $X, X'$ are disjoint, but there are components
  $S \in \mathfrak{A}, S' \in \mathfrak{A}'$ which are neither equal
  or disjoint, we can find such elements. Indeed, these can be chosen
  to restrict to pseudo-Anosov homeomorphisms in $S, S'$: such
  elements exist because $S$ and $S'$ are active, and by admissibility
  $X,X'$ are the canonical reduction multicurves of $A,A'$.

  But any two non-commuting, infinite order elements $a,a'$ of the
  mapping class group have powers which generate a free group. This is
  a well-known folklore result, but can e.g.\ be found in the
  literature by using \cite[Theorem~1.8]{Kob} to show that suitably
  high powers $a^n, (a')^n$ embed into a nonabelian right-angled Artin
  group, and noncommuting elements there have powers that generate a
  free group (e.g.\ by \cite[Lemma~3.1]{Kob}, as explained in the paragraph below \cite[Theorem~1.8]{Kob}.
\end{proof}

Let $\calg$ be a measured groupoid over a standard probability space $Y$ which admits an action-type cocycle $\rho:\calg\to\Mod^1(V)$. Two admissible pairs $(\calh,\cala)$ and $(\calh',\cala')$ (with respect to $\rho$) are \emph{compatible with respect to $\rho$} if, denoting by $(X,\mathfrak{A})$ and $(X',\mathfrak{A}')$ their respective decomposition maps, for a.e.\ $y\in Y$, the pairs $(X(y),\mathfrak{A}(y))$ and $(X'(y),\mathfrak{A}'(y))$ are compatible. The following proposition gives a purely groupoid-theoretic characterization of compatibility (i.e.\ with no reference to the cocycle $\rho$).  

\begin{prop}\label{prop:characterization-edges}
Let $\calg$ be a measured groupoid over a standard probability space $Y$, equipped with a strict action-type cocycle $\rho:\calg\to\Mod^1(V)$. Let $(\calh,\cala)$ and $(\calh',\cala')$ be two admissible pairs with respect to $\rho$. Then the following are equivalent.
\begin{enumerate}[(1)]
\item $(\calh,\cala)$ and $(\calh',\cala')$ are compatible with respect to $\rho$;
\item for every Borel subset $U\subseteq Y$ of positive measure, there exists a Borel subset $V\subseteq U$ of positive measure such that $\langle\cala_{|V},\cala'_{|V}\rangle$ is amenable.  
\end{enumerate}
\end{prop}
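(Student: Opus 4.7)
For the forward direction $(1)\Rightarrow(2)$, the plan is to transfer Lemma~\ref{lemma:hermitage-disjoint} to the groupoid setting via a countable Borel partition. Since the decomposition maps of $(\calh,\cala)$ and $(\calh',\cala')$ take values in the countable set of isotopy classes of decorated multicurves on $\partial V$, I would partition $Y$ into at most countably many Borel subsets $Y_i$ on which both maps are constant, with respective values $(X_i,\mathfrak{A}_i)$ and $(X_i',\mathfrak{A}_i')$. Compatibility a.e.\ forces these decorated multicurves to be compatible on each $Y_i$ of positive measure, so Lemma~\ref{lemma:hermitage-disjoint} yields that the subgroup $\langle A_i,A_i'\rangle\subseteq\Mod^0(V)$ generated by the two active subgroups is amenable. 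As $\rho$ has trivial kernel, $\rho^{-1}(\langle A_i,A_i'\rangle)_{|Y_i}$ is an amenable subgroupoid containing $\langle\cala_{|Y_i},\cala'_{|Y_i}\rangle$. Given any positive measure $U\subseteq Y$, choosing $i$ so that $Y_i\cap U$ has positive measure and setting $V=Y_i\cap U$ yields the required amenable subgroupoid.

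For the converse $(2)\Rightarrow(1)$, I would proceed by contrapositive. Assuming (1) fails, a countable partition together with a restriction produces a positive measure Borel subset $U\subseteq Y$ on which the two decomposition maps take constant admissible values $(X,\mathfrak{A})$ and $(X',\mathfrak{A}')$ that are incompatible. Writing $A,A'$ for the corresponding active subgroups in $\Mod^0(V)$, the target is to produce, for every positive measure $V\subseteq U$, some positive measure $V'\subseteq V$ on which $\langle\cala_{|V'},\cala'_{|V'}\rangle$ is nonamenable; this will contradict (2), since restrictions of amenable groupoids remain amenable.

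The key intermediate step is a group-theoretic converse to Lemma~\ref{lemma:hermitage-disjoint}: incompatibility of two admissible decorated multicurves forces $\langle A,A'\rangle$ to contain a nonabelian free subgroup. I would establish this by case analysis. If $X$ and $X'$ essentially intersect, pick essentially intersecting curves $c\in X$ and $c'\in X'$, and multitwists $t\in A$ and $t'\in A'$ twisting nontrivially about $c$ and $c'$ respectively (possible since the admissible multitwist sublattice of $A$ has independent coefficients on each curve of $X$, and similarly for $A'$). The fixed-point simplices of $t$ and $t'$ on $\PML(\partial V)$ are disjoint, so high powers satisfy a ping-pong hypothesis. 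If instead $X$ and $X'$ are disjoint but some $S\in\mathfrak{A}$ and $S'\in\mathfrak{A}'$ are distinct yet not disjoint, I would instead use partial pseudo-Anosov elements $\psi\in A$ and $\psi'\in A'$ supported on $S$ and $S'$: because $S\neq S'$ but they overlap, their invariant measured laminations lie in different simplices of $\PML(\partial V)$ with no common fixed projective measure, so high powers generate a nonabelian free group by the same ping-pong argument.

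Once such $f\in A$ and $f'\in A'$ generating a nonabelian free group are found, Lemma~\ref{lemma:free} applies to $(\rho^{-1}(\langle f\rangle),\rho^{-1}(\langle f'\rangle))$, showing that this is a strongly Schottky pair of subgroupoids of $\calg$. By the defining property, every positive measure $V\subseteq U$ contains a positive measure $V'\subseteq V$ on which the groupoid generated by $\rho^{-1}(\langle f\rangle)_{|V'}$ and $\rho^{-1}(\langle f'\rangle)_{|V'}$ is nonamenable; since $f\in A$ and $f'\in A'$, this subgroupoid sits inside $\langle\cala_{|V'},\cala'_{|V'}\rangle$, forcing the latter to be nonamenable as well. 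The hard part is the group-theoretic converse of Lemma~\ref{lemma:hermitage-disjoint}: verifying that incompatibility always yields free generators in $\langle A,A'\rangle$ requires careful control of the dynamics of the active subgroups on $\PML(\partial V)$, especially in the mixed configurations where $\mathfrak{A}$ is nonempty and overlaps partially with $\mathfrak{A}'$; once this input is available, the passage from groups to groupoids via Lemma~\ref{lemma:free} is essentially automatic.
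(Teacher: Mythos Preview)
Your argument follows the paper's exactly in both directions: partition $Y$ along the (countable-valued) decomposition maps, invoke Lemma~\ref{lemma:hermitage-disjoint} for $(1)\Rightarrow(2)$, and for the contrapositive produce $f\in A$, $f'\in A'$ generating a nonabelian free group and feed them into Lemma~\ref{lemma:free}. The paper does not attempt your ping-pong case analysis; it simply cites Koberda \cite[Theorem~1.8]{Kob}, which gives precisely that two pure mapping classes whose supports are neither disjoint nor nested have powers generating a free group.

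Your direct argument has two soft spots worth flagging. First, the claim that ``the admissible multitwist sublattice of $A$ has independent coefficients on each curve of $X$'' is not automatic in $\Mod^0(V)$: Dehn twists about non-meridians need not lie in the handlebody group at all, so a given $c\in X$ may only be ``seen'' by $A$ via a partial pseudo-Anosov on an adjacent active component rather than by a multitwist. Admissibility ($X$ is the boundary multicurve of $A$) guarantees that \emph{some} element of $A$ has $c$ in its canonical reduction system, but you would need to route your ping-pong through that element, not a twist. Second, your two cases do not exhaust incompatibility: with $X,X'$ disjoint you can also have a curve of $X$ lying in the interior of some $S'\in\mathfrak{A}'$ (so $S'$ is neither isotopic to nor disjoint from the adjacent components of $\partial V\setminus X$), and the mixed case where an active surface on one side overlaps a twist curve on the other needs its own treatment. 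All of this is exactly what Koberda's theorem packages; citing it (or an equivalent source) is the clean fix.
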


\begin{proof}[{Proof of Proposition~\ref{prop:characterization-edges}}]
Let $Y^*=\dunion_{i\in I}Y_i$ be a countable Borel partition of a conull Borel subset $Y^*\subseteq Y$ such that for every $i\in I$, there exist admissible pairs $(X_i,\mathfrak{A}_i)$ and $(X'_i,\mathfrak{A}'_i)$ such that $\calh_{|Y_i}=\rho^{-1}(\Stab_{\Mod^1(V)}(X_i))$ and $\calh'_{|Y_i}=\rho^{-1}(\Stab_{\Mod^1(V)}(X'_i))$, and letting $A_i,A'_i\subseteq\Mod^1(V)$ be the active subgroups of $(X_i,\mathfrak{A}_i)$ and $(X'_i,\mathfrak{A}'_i)$ respectively, we have $\cala_{|Y_i}=\rho^{-1}(A_i)$ and $\cala'_{|Y_i}=\rho^{-1}(A'_i)$.

We first prove that $\neg (1) \Rightarrow \neg (2)$. If $(1)$ fails, then there exists $i_0\in I$ such that $Y_{i_0}$ has positive measure and $(X_{i_0},\mathfrak{A}_{i_0})$ and $(X'_{i_0},\mathfrak{A}'_{i_0})$ are not compatible. Then there exist $g_{i_0}\in A_{i_0}$ and $g'_{i_0}\in A'_{i_0}$ that generate a nonabelian free subgroup of $\Mod^1(V)$ by Lemma~\ref{lemma:hermitage-nondisjoint}. Lemma~\ref{lemma:free} ensures that for every Borel subset $V\subseteq Y_{i_0}$ of positive measure, the groupoid $\langle\rho^{-1}(\langle g_{i_0}\rangle)_{|V},\rho^{-1}(\langle g'_{i_0}\rangle)_{|V}\rangle$ is nonamenable. Therefore $\langle \cala_{|V},\cala'_{|V}\rangle$ is also nonamenable for every Borel subset $V\subseteq Y_{i_0}$ of positive measure, so $(2)$ fails.

We now prove that $(1)\Rightarrow (2)$. If $(1)$ holds, then for every $i\in I$ such that $Y_i$ has positive measure, the pairs $(X_i,\mathfrak{A}_i)$ and $(X'_i,\mathfrak{A}'_i)$ are compatible, so $\langle A_i,A'_i\rangle$ is amenable (Lemma~\ref{lemma:hermitage-disjoint}). Let now $U\subseteq Y$ be a Borel subset of positive measure, and let $V\subseteq U$ of positive measure be contained in $Y_{i_0}$ for some $i_0\in I$. Then  $\langle\cala_{|V},\cala'_{|V}\rangle$ is contained in $\rho^{-1}(\langle A_{i_0},A'_{i_0}\rangle)_{|V}$, which is amenable because $\langle A_{i_0},A'_{i_0}\rangle$ is and $\rho$ has trivial kernel (see \cite[Corollary~3.39]{GH})
\end{proof}

Let $\calh,\calh'$ be two measured subgroupoids of $\calg$ of meridian type with respect to an action-type cocycle $\rho:\calg\to\Mod^1(V)$. We say that $\calh$ and $\calh'$ are \emph{compatible with respect to $\rho$} if, denoting by $\varphi,\varphi'$ their respective meridian maps with respect to $\rho$, for a.e.\ $y\in Y$, the meridians $\varphi(y)$ and $\varphi'(y)$ are disjoint up to isotopy.

\begin{cor}\label{cor:adjacency-preserved}
Let $\calg$ be a measured groupoid over a standard probability space $Y$, equipped with two strict action-type cocycles $\rho_1,\rho_2:\calg\to\Mod^1(V)$, and let $\calh,\calh'$ be two measured subgroupoids of $\calg$ of meridian type with respect to both $\rho_1$ and $\rho_2$.

Then $\calh$ and $\calh'$ are compatible with respect to $\rho_1$ if and only if they are compatible with respect to $\rho_2$.
\end{cor}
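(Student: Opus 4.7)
The plan is to reduce the statement to Proposition~\ref{prop:characterization-edges}, which supplies a purely groupoid-theoretic criterion for compatibility of admissible pairs. The idea is to attach to $\calh$ and $\calh'$ canonical ``active'' amenable subgroupoids $\cala,\cala'$ in a cocycle-independent way, and then to match compatibility of meridian-type subgroupoids with compatibility of the resulting admissible pairs.

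For each $i\in\{1,2\}$, applying Lemma~\ref{lemma:property-a} to $\rho_i$ and to the $\rho_i$-meridian map of $\calh$ yields an amenable subgroupoid $\cala_i\subseteq\calh$ which is stably normal, of infinite type, and which stably contains every other stably normal amenable subgroupoid of $\calh$. By construction $(\calh,\cala_i)$ is an admissible pair with respect to $\rho_i$ in the sense of Section~\ref{sec:hermitages}: its decomposition map sends $y$ to $(c,\emptyset)$ when the $\rho_i$-meridian $c$ is nonseparating, or separating with both complementary components of genus at least $2$, and to $(c\cup d_X,\emptyset)$ when $c$ is separating with a once-holed torus complementary component $X$ (where $d_X$ is the unique meridian of $X$, by Lemma~\ref{lem:meridian-stab-containment}). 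Since amenability and stable normality are purely groupoid-theoretic notions, $\cala_2$ is a stably normal amenable subgroupoid of $\calh$, and hence is stably contained in $\cala_1$ by the maximality clause of Lemma~\ref{lemma:property-a}; by symmetry, $\cala_1$ and $\cala_2$ are stably equal. The same argument gives $\cala'_1$ and $\cala'_2$ stably equal; write $\cala,\cala'$ for these common subgroupoids.

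It remains to verify that compatibility of $\calh,\calh'$ as meridian-type subgroupoids with respect to $\rho_i$ is equivalent to compatibility of the admissible pairs $(\calh,\cala),(\calh',\cala')$ with respect to $\rho_i$. When neither $\rho_i$-meridian has a once-holed torus complementary component this is immediate. Otherwise the only delicate case is when $c$ has a once-holed torus complement $X$: a meridian $c'$ disjoint from $c$ either lies on the non-torus side of $c$ (and is then automatically disjoint from $d_X\subset X$), or lies in the interior of $X$, in which case the uniqueness of the meridian of $X$ forces $c'=d_X$; an analogous argument handles the subcase where $c'$ also has a once-holed torus complement. Hence, by Proposition~\ref{prop:characterization-edges}, compatibility of $\calh,\calh'$ with respect to $\rho_i$ is equivalent to the cocycle-independent condition that every positive measure Borel subset of $Y$ contains a positive measure Borel subset $V$ on which $\langle\cala_{|V},\cala'_{|V}\rangle$ is amenable, which yields the corollary. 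The bulk of the work has already been absorbed into Proposition~\ref{prop:characterization-edges}, so no further substantial obstacle is expected; the main points of care are the two cocycle-independent verifications above.
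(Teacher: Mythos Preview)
Your proposal is correct and follows essentially the same approach as the paper: you build the canonical amenable subgroupoids $\cala_i,\cala'_i$ via Lemma~\ref{lemma:property-a}, observe they are stably equal for $i=1,2$ by maximality, and reduce to the cocycle-free criterion of Proposition~\ref{prop:characterization-edges}. You actually spell out the once-holed torus case (disjointness of $c,c'$ versus compatibility of the associated decorated multicurves) in more detail than the paper, which simply asserts it.
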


\begin{proof}
Let $Y^*=\dunion_{j\in J}Y_j$ be a partition of a conull Borel subset $Y^*\subseteq Y$ into at most countably many Borel subsets such that for every $i\in\{1,2\}$ and every $j\in J$, there exist meridians $c_{i,j},c'_{i,j}$ such that $\calh_{|Y_j},\calh'_{|Y_j}$ are equal to the $(\calg_{|Y_j},\rho_i)$-stabilizers of the isotopy classes of $c_{i,j},c'_{i,j}$, respectively. 

For every $i\in\{1,2\}$ and every $j\in J$, let $A_{i,j}$ (resp.\ $A'_{i,j}$) be the subgroup of $\Mod^1(V)$ made of all elements that act trivially in restriction to every connected component of $\partial V\setminus c_{i,j}$ (resp. $\partial V\setminus c'_{i,j}$) which is not a once-holed torus. Notice that $A_{i,j},A'_{i,j}$ are the active subgroups of some admissible decorated multicurves $(X_{i,j},\mathfrak{A}_{i,j}), (X'_{i,j},\mathfrak{A}'_{i,j})$, by letting $X_{i,j}$ and $X'_{i,j}$ be obtained from $c_{i,j}$ and $c'_{i,j}$ by adding the unique nonseparating meridian in every complementary component which is a once-holed torus, and letting $\mathfrak{A}_{i,j}=\mathfrak{A}'_{i,j}=\emptyset$. See the examples right after Definition~\ref{de:hermitage}. Notice that $c_{i,j}$ and $c'_{i,j}$ are disjoint up to isotopy if and only if $(X_{i,j},\emptyset)$ and $(X'_{i,j},\emptyset)$ are compatible.

For every $i\in\{1,2\}$, let $\cala_i\subseteq\calh$ be a subgroupoid such that $(\cala_i)_{|Y_j}=\rho_i^{-1}(A_{i,j})_{|Y_j}$ for every $j\in J$, and let $\cala'_i\subseteq\calh'$ be defined in the same way, using $A'_{i,j}$ in place of $A_{i,j}$. Then $(\calh,\cala_i)$ and $(\calh',\cala'_i)$ are admissible pairs with respect to $\rho_i$. Lemma~\ref{lemma:property-a} thus ensures that $\cala_1$ and $\cala_2$ are stably equal (as they are both stably maximal for the property of being a stably normal amenable subgroupoid of $\calh$), and likewise $\cala'_1$ and $\cala'_2$ are stably equal. The conclusion therefore follows from Proposition~\ref{prop:characterization-edges}.
\end{proof}

\subsection{Characterizing subgroupoids of nonseparating-meridian type}\label{sec:characterization-nonseparating}

The goal of this section is to prove the following proposition.

\begin{prop}\label{prop:characterize-nonseparating}
Let $\calg$ be a measured groupoid over a standard probability space $Y$, equipped with two strict action-type cocycles $\rho_1,\rho_2:\calg\to\Mod^1(V)$, and let $\calh\subseteq\calg$ be a measured subgroupoid.

Then $\calh$ is of nonseparating-meridian type with respect to $\rho_1$ if and only if it is of nonseparating-meridian type with respect to $\rho_2$. 
\end{prop}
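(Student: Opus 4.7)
The plan is to produce a cocycle-independent groupoid-theoretic characterization of being of nonseparating-meridian type; since such a characterization makes no reference to any particular cocycle, the proposition follows by applying it to $\rho_1$ and $\rho_2$ in turn. Suppose $\calh$ is of nonseparating-meridian type with respect to $\rho_1$. By Proposition~\ref{prop:characterization-vertices}, there is $\cala \subseteq \calh$ with $(\calh, \cala)$ satisfying Property~$\qnsep$. Property~$\qnsep$ is phrased purely in terms of subgroupoids of $\calg$ (amenability, normality, Schottky pairs, product-like subgroupoids), without any reference to $\rho_1$, so it continues to hold regardless of which action-type cocycle we consider; by Remark~\ref{rk:unique-a}, the witness $\cala$ is moreover stably unique. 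Applying Lemma~\ref{lemma:product-like} with $\rho=\rho_2$ then shows that $(\calh, \cala)$ is an admissible pair with respect to $\rho_2$, so there is a countable Borel partition $Y^* = \bigsqcup_i Y_i$ on each piece of which $\calh_{|Y_i}$ is the $(\calg_{|Y_i}, \rho_2)$-stabilizer of some admissible decorated multicurve $(X_i, \mathfrak{A}_i)$ with amenable active subgroup $A_i$ satisfying $\cala_{|Y_i}=\rho_2^{-1}(A_i)_{|Y_i}$.

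It remains to show that (up to refining the partition) each $X_i$ is a single nonseparating meridian with $\mathfrak{A}_i=\emptyset$. Separating-meridian components of $X_i$ are excluded by Lemma~\ref{lemma:stab-non-unilateral}: if $X_i$ contained a separating meridian $c$, its isotopy class would be $(\calh_{|Y_i},\rho_2)$-invariant, forcing $\calh$ to fail Property~$\pnsep$ — contradicting the cocycle-independent validity of $\pnsep$ established above. A parallel analysis handles the active decorations: any active subsurface of complexity sufficient to support a nonabelian free subgroup of the handlebody group (which is the generic case by Lemma~\ref{lemma:pA-in-complement}) contradicts amenability of $A_i$, and the remaining small-complexity active components (once-holed tori, annular collars) either produce a canonical nonseparating meridian that reduces to the standard case or force a separating meridian into $X_i$, already excluded. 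This leaves $X_i$ a nonseparating multicurve whose components are either nonseparating meridians or curves belonging to annulus pairs, with $\mathfrak{A}_i=\emptyset$.

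The main obstacle, and the heart of the argument, is to exclude the configurations where $X_i$ is not a single nonseparating meridian: multi-meridian multicurves and, more delicately, multicurves involving annulus pairs. Here we exploit that compatibility of admissible subgroupoids is cocycle-independent — this is the content of Proposition~\ref{prop:characterization-edges}, which expresses compatibility through amenability of $\langle \cala_{|V},\cala'_{|V}\rangle$, a condition making no reference to the cocycle — in order to extract a combinatorial invariant from the groupoid data alone. Following the strategy sketched in the Introduction, one considers the family of all subgroupoids $\calh' \subseteq \calg$ that satisfy Property~$\pnsep$, are stably distinct from $\calh$, and are pairwise compatible with $\calh$ in the sense of Proposition~\ref{prop:characterization-edges}; the maximal such families encode the topological type of $X_i$. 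A single nonseparating meridian completes to a pants decomposition of $\partial V$ by $3g-3$ pairwise disjoint nonseparating meridians, yielding a maximal compatible family no member of which can be removed without changing the combinatorial link in the disk graph. By contrast, a family built from annulus pairs or longer multicurves exhibits combinatorial redundancy in the appropriate graph of disks and annuli, so that some element can be removed without altering its link. Translating this redundancy into the groupoid language produces a cocycle-free criterion that distinguishes single nonseparating meridians from the other admissible possibilities, and therefore forces $X_i$ to consist of exactly one nonseparating meridian. The symmetric argument (swapping $\rho_1$ and $\rho_2$) then yields the proposition.
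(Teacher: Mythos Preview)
Your overall strategy is correct and matches the paper's: establish that Property~$\qnsep$ is cocycle-free, deduce admissibility with respect to $\rho_2$ via Lemma~\ref{lemma:product-like}, and then use a combinatorial argument to force the admissible pair to be a single nonseparating meridian. The execution, however, has genuine gaps in your second and third paragraphs.

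Your second paragraph attempts a case analysis that is both unnecessary and incomplete. You only exclude separating \emph{meridian} components of $X_i$, but $X_i$ could a priori contain separating non-meridian curves; your appeal to Lemma~\ref{lemma:pA-in-complement} does not apply to complements of arbitrary multicurves, only to complements of meridians; and the conclusion $\mathfrak{A}_i=\emptyset$ is unjustified (admissibility permits nonempty $\mathfrak{A}_i$ as long as the active subgroup stays amenable). The paper attempts no such reduction: all it extracts at this stage, via Lemma~\ref{lemma:stab-non-unilateral}, is that $(X_i,\mathfrak{A}_i)$ is \emph{clean}, i.e.\ not of the form $(c,\emptyset)$ with $c$ a separating meridian.

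Your third paragraph has the right idea but is not a proof. The paper's argument is genuinely global: one applies the first step to \emph{every} nonseparating meridian $v$ (with respect to $\rho_1$) at once, obtaining for a.e.\ $y\in Y$ a map $\theta(y,\cdot):V(\mathbb{D}^{\nsep})\to V(\mathbb{M})$ into the graph of clean admissible decorated multicurves. One then proves that $\theta(y,\cdot)$ is injective---this uses Lemma~\ref{lemma:uniqueness}, which you never invoke: if two distinct meridians had the same image, their $\rho_1$-stabilizers would be stably equal---and preserves adjacency and non-adjacency (Proposition~\ref{prop:characterization-edges}). The conclusion then comes from a purely combinatorial lemma (Lemma~\ref{lemma:recognize-annulus}): every injective graph map $\mathbb{D}^{\nsep}\to\mathbb{M}$ takes values in $\mathbb{D}^{\nsep}$. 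That lemma is what encodes the pants-decomposition/link-redundancy heuristic you describe, but its hypotheses (injectivity, full graph-map property) are exactly what your sketch leaves out. Phrasing things in terms of ``maximal compatible families'' around a single $\calh$ does not produce this; you need the other members of the family to themselves land on vertices of $\mathbb{M}$, which is what the global construction supplies.
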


A decorated multicurve $(X,\mathfrak{A})$ is \emph{clean} if it is not of the form $(c,\emptyset)$ for some separating meridian $c$. The \emph{graph of clean admissible decorated multicurves} $\mathbb{M}$ is the graph whose vertices correspond to isotopy classes of clean admissible decorated multicurves, where two distinct vertices are joined by an edge if the corresponding decorated multicurves are compatible. The \emph{graph of nonseparating meridians} $\mathbb{D}^{\nsep}$ is the graph whose vertices correspond to isotopy classes of nonseparating meridians, where two distinct vertices are joined by an edge if the corresponding meridians are disjoint up to isotopy. Notice that $\mathbb{D}^{\nsep}$ is naturally a subgraph of $\mathbb{M}$, by sending a nonseparating meridian $c$ to the pair $(c,\emptyset)$.

\begin{lem}\label{lemma:recognize-annulus}
Every injective graph map\footnote{i.e.\ preserving adjacency and non-adjacency} from $\mathbb{D}^{\nsep}$ to $\mathbb{M}$ takes its values in $\mathbb{D}^{\nsep}$ (viewed as a subgraph of $\mathbb{M}$ via the natural inclusion).  
\end{lem}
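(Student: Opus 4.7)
The plan is to argue by contradiction: assume $\phi : \mathbb{D}^{\nsep} \to \mathbb{M}$ is an injective graph map (adjacency- and non-adjacency-preserving) and there exists a nonseparating meridian $c$ such that $\phi(c) = (X, \mathfrak{A})$ is not (the image of) a single nonseparating meridian. The strategy exploits the structure of maximal cliques in both graphs, following the redundancy idea sketched in the introduction.

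First I would complete $c$ to a clique of size $3g-3$ in $\mathbb{D}^{\nsep}$: there is a pants decomposition $P = \{c = c_1, \dots, c_{3g-3}\}$ of $\partial V$ consisting entirely of nonseparating meridians, obtained by starting from a standard cut system of $g$ meridians and extending within each complementary planar component (this works for $g \ge 3$). Then $\phi(P)$ is a clique of the same size in $\mathbb{M}$, and the images $\phi(c_i) = (X_i, \mathfrak{A}_i)$ are pairwise compatible. By compatibility, the union $\bigcup_i X_i$ is a multicurve on $\partial V$, hence contains at most $3g-3$ isotopy classes of essential simple closed curves --- an unusually tight budget.

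Next I would split into two cases according to the shape of $\phi(c)$. If $\mathfrak{A} \neq \emptyset$, then inside any active complementary component $S$ one can produce admissible decorated multicurves disjoint from $X$ (for instance via meridians or annulus pairs supported in $S$, if present) that are compatible with $\phi(c)$ but whose preimages under $\phi$ would have to be nonseparating meridians sitting inside $S$ and adjacent to $c$; the richness of the link of $\phi(c)$ in $\mathbb{M}$ thus exceeds the link of $c$ in $\mathbb{D}^{\nsep}$, violating injectivity and link preservation. If instead $\mathfrak{A} = \emptyset$ and $|X| \geq 2$ (the principal case, including annulus pairs), admissibility forces $X$ to support an infinite amenable multitwist subgroup of $\Mod^0(V)$, so the curves of $X$ are meridians or parts of annulus pairs. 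In the clique $\phi(P)$, any such compound vertex consumes at least two curves of the total budget $3g-3$, so among the $3g-3$ pairwise distinct image vertices some curves must be shared. Following the introduction's hint, in a clique of compatible annulus pairs one pair can always be removed without altering the common link in $\mathbb{M}$; exploiting this redundancy I would construct an explicit clean admissible decorated multicurve $(X', \emptyset) \neq \phi(c)$ compatible with every $\phi(c_j)$, obtained by exchanging one curve of $X$ with another curve already appearing in some $X_j$. Its preimage under $\phi$ would then be a nonseparating meridian $c'$ disjoint from every $c_j$, contradicting the maximality of the pants decomposition $P$ inside $\mathbb{D}^{\nsep}$.

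The hard part will be the redundancy argument in the second case: turning the loose combinatorial observation that ``annulus pairs in a collection of $3g-3$ are redundant while meridians in a pants decomposition are not'' into an explicit exchange producing a new admissible decorated multicurve compatible with all other $\phi(c_j)$. This requires careful bookkeeping of which curves are shared among the supports $X_j$, verifying that the exchanged vertex is still clean and admissible (in particular, that the modified curve set still supports an amenable multitwist subgroup of $\Mod^0(V)$), and using the compatibility structure in $\mathbb{M}$ to check that the new vertex is indeed adjacent to every $\phi(c_j)$ for $j \ne 1$.
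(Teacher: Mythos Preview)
Your proposal has a structural gap that affects both cases of your analysis: you repeatedly try to pull back vertices from $\mathbb{M}$ to $\mathbb{D}^{\nsep}$ via $\phi$, but an injective graph map need not be surjective, so there is no reason the vertices you construct lie in the image of $\phi$. In the case $\mathfrak{A} \neq \emptyset$, you say the link of $\phi(c)$ in $\mathbb{M}$ is ``richer'' than the link of $c$ in $\mathbb{D}^{\nsep}$ and that this violates injectivity; but an injective graph map only embeds the link of $c$ into the link of $\phi(c)$, and the latter may be strictly larger without any contradiction. In the case $\mathfrak{A} = \emptyset$, $|X| \ge 2$, you construct a new vertex $(X',\emptyset)$ adjacent to every $\phi(c_j)$ and then write ``its preimage under $\phi$ would then be a nonseparating meridian $c'$'' --- but no preimage is guaranteed. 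Maximality of the clique $P$ in $\mathbb{D}^{\nsep}$ does not transfer to maximality of $\phi(P)$ in $\mathbb{M}$.

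The paper sidesteps this by using a property of the pants decomposition $\{v_1,\dots,v_{3g-3}\}$ that \emph{is} preserved by injective graph maps: for each $i$,
\[
\mathrm{Lk}_{\mathbb{D}^{\nsep}}(\{v_1,\dots,v_{3g-3}\}) \subsetneq \mathrm{Lk}_{\mathbb{D}^{\nsep}}(\{v_1,\dots,\widehat{v_i},\dots,v_{3g-3}\}) \setminus \{v_i\},
\]
witnessed by some $w$ adjacent to every $v_j$ with $j \neq i$ but not to $v_i$. Since $\phi$ preserves both adjacency and non-adjacency, $\phi(w)$ witnesses the same strict containment among the images, so this ``no redundancy'' property transfers. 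The paper then associates to each image $(X_i,\mathfrak{A}_i)$ a subsurface $\Sigma_i$ (the union of the active pieces together with annuli around the remaining curves of $X_i$), and the link property forces each $\Sigma_i$ to have a component $\Sigma'_i$ not subordinate to any other $\Sigma_j$. A curve-counting argument --- the boundary curves and pants curves of the $\Sigma'_i$ together yield at least $6g-6$ curves in a multicurve on $\partial V$, each repeated at most twice --- then forces every $\Sigma'_i$, and hence every $\Sigma_i$, to be an annulus, so $(X_i,\mathfrak{A}_i) = (c_i,\emptyset)$ for a single curve $c_i$; admissibility and cleanness finish the job. Your redundancy intuition is exactly the right one, but it needs to be packaged as a link-comparison invariant pushed \emph{forward} along $\phi$, not as a new vertex pulled back.
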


\begin{proof}
Given a subset $F\subseteq V\mathbb{D}^{\nsep}$, we denote by $\mathrm{Lk}_{\mathbb{D}^{\nsep}}(F)$ the link of $F$ in $\mathbb{D}^{\nsep}$, i.e.\ the set of all vertices of $\mathbb{D}^{\nsep}$ which are at distance $1$ from every vertex of $F$. 

Let $v\in V(\mathbb{D}^{\nsep})$ be a vertex. By completing $v$ to a pair of pants decomposition made of nonseparating meridians, we can find $3g-3$ pairwise distinct, pairwise adjacent vertices $v=v_1,\dots,v_{3g-3}$ (corresponding to the pants decomposition) such that for every $i\in\{1,\dots,3g-3\}$, one has 
\[ \mathrm{Lk}_{\mathbb{D}^{\nsep}}(\{v_1,\dots,v_{3g-3}\}) \subsetneq \mathrm{Lk}_{\mathbb{D}^{\nsep}}(\{v_1,\dots,v_{i-1},v_{i+1},\dots,v_{3g-3}\})\setminus\{v_i\}. \]
This holds because the leftmost term is empty, while the rightmost term is infinite, and consists in all nonseparating meridians contained in the $4$-holed sphere created by removing $v_i$ from the collection.  
	
So the same strict inclusion of links should hold for their images in $\mathbb{M}$, which correspond to pairwise compatible decorated multicurves $(X_1,\mathfrak{A}_1),\dots,(X_{3g-3},\mathfrak{A}_{3g-3})$. For every $i\in\{1,\dots,3g-3\}$, we let $\Sigma_i$ be the subsurface of $\partial V$ equal to the union of all subsurfaces in $\mathfrak{A}_i$, together with all annuli around curves in $X_i$ that are not boundary curves of any subsurface in $\mathfrak{A}_i$. Notice that the set $\{\Sigma_1,\dots,\Sigma_{3g-3}\}$ cannot contain both a subsurface $S$ and the collar neighborhood $A$ of one of its boundary components, as otherwise removing $A$ from the collection does not change the link.
	 More generally, for every $i\in\{1,\dots,3g-3\}$, one of the connected components $\Sigma'_i\subseteq\Sigma_i$ is not a connected component of some $\Sigma_j$ with $j\neq i$, and is also not the collar neighborhood of a boundary curve of $\Sigma_j$ -- otherwise removing $(X_i,\mathfrak{A}_i)$ from the collection does not change its link. So the subsurfaces $\Sigma'_1,\dots,\Sigma'_{3g-3}$ are pairwise nonisotopic and pairwise disjoint, and $\{\Sigma'_1,\dots,\Sigma'_{3g-3}\}$ does not contain a subsurface together with the collar neighborhood of one of its boundary components. For every $i\in\{1,\dots,3g-3\}$, let $\{b_{i,1},\dots,b_{i,k_i}\}$ be the set of all boundary curves of $\Sigma'_i$, and let $\{d_{i,1},\dots,d_{i,\ell_i}\}$ be a set of isotopy classes of essential simple closed curves on $\Sigma'_i$ that form a pair of pants decomposition of $\Sigma'_i$ (with the convention that in the case of an annulus, the former set contains two isotopic curves, and the latter set is empty). The tuple consisting of all $b_{i,j}$ and $d_{i,j}$ contains at least $6g-6$ curves which are pairwise disjoint, each being repeated at most twice up to isotopy (and the $d_{i,j}$ are not isotopic to any other curve in the collection). So every subsurface $\Sigma'_i$ contributes exactly two curves that are both of the form $c_{i,j}$, and is therefore an annular subsurface. Furthermore, since there are $3g-3$ such, and no $\Sigma'_i$ appears as a subsurface of $\Sigma_j, j\neq i$, we actually have $\Sigma'_i = \Sigma_i$ for all $i$. Therefore $(X_i,\mathfrak{A}_i)=(c_i,\emptyset)$, where $c_i$ is the core curve of the annulus $\Sigma_i$. As $(X_i,\mathfrak{A}_i)$ is admissible, some power of the twist around $c_i$ must belong to the handlebody group, so $c_i$ is a meridian by \cite[Theorem~1.11]{Oer} or \cite[Theorem~1]{McC}. As $(X_i,\mathfrak{A}_i)$ is clean, the meridian $c_i$ is nonseparating, and the conclusion follows.
\end{proof}

\begin{proof}[Proof of Proposition~\ref{prop:characterize-nonseparating}]
Let $v\in V(\mathbb{D}^{\nsep})$, in other words $v$ is the isotopy class of a nonseparating meridian. Let $\calh_v$ be the $(\calg,\rho_1)$-stabilizer of $v$, and let $\cala_v=\rho_1^{-1}(\langle T_v\rangle)$. Then $(\calh_v,\cala_v)$ satisfies Property~$\qnsep$ (by Proposition~\ref{prop:characterization-vertices}, applied to the cocycle $\rho_1$). Lemma~\ref{lemma:product-like}, applied to the cocycle $\rho_2$, implies that $(\calh_v,\cala_v)$ is an admissible pair with respect to $\rho_2$. So there exist a conull Borel subset $Y^*\subseteq Y$ and a partition $Y^*=\dunion_{i\in I} Y_{v,i}$ into at most countably many Borel subsets of positive measure such that for every $i\in I$, there exists a (unique) admissible pair $(X_{v,i},\mathfrak{A}_{v,i})$ such that $(\calh_v)_{|Y_{v,i}}$ is the $(\calg_{|Y_{v,i}},\rho_2)$-stabilizer of $X_{v,i}$ and, denoting by $A_{v,i}$ the active subgroup of $(X_{v,i},\mathfrak{A}_{v,i})$, one has $(\cala_v)_{|Y_{v,i}}=\rho^{-1}(A_{v,i})_{|Y_{v,i}}$. In addition, Lemma~\ref{lemma:stab-non-unilateral} ensures that $(X_{v,i},\mathfrak{A}_{v,i})$ is clean. 
For every $y\in Y$ and every $v\in V(\mathbb{D}^{\nsep})$, we then let $\theta(y,v)=(X_{v,i},\mathfrak{A}_{v,i})$ whenever $y\in Y_{v,i}$. This defines a Borel map $\theta:Y\times V(\mathbb{D}^{\nsep})\to V(\mathbb{M})$. 

We claim that for almost every $y\in Y$, the map $\theta(y,\cdot)$ determines an injective graph map $\mathbb{D}^{\nsep}\to\mathbb{M}$. Let us first explain how to complete the proof of the proposition from this claim. By Lemma~\ref{lemma:recognize-annulus}, every injective graph map $\mathbb{D}^{\nsep}\to\mathbb{M}$ sends nonseparating meridians to nonseparating meridians. Therefore, if $v$ is a nonseparating meridian, then $X_{v,i}$ is a nonseparating meridian (and $\mathfrak{A}_{v,i}=\emptyset$) whenever $Y_{v,i}$ has positive measure, and the proposition follows.

We are now left with proving the above claim. First, for almost every $y\in Y$, the map $\theta(y,\cdot)$ is injective. Indeed otherwise, as $V(\mathbb{D}^{\nsep})$ is countable, there exist a Borel subset $U\subseteq Y$ of positive measure and two non-isotopic nonseparating meridians $c,c'$ such that for every $y\in U$, one has $\theta(y,c)=\theta(y,c')$ (we denote by $(X,\mathfrak{A})$ the common image). In particular, the $(\calg_{|U},\rho_1)$-stabilizer of $c$ is stably equal to the $(\calg_{|U},\rho_1)$-stabilizer of $c'$, since they are both stably equal to the $(\calg_{|U},\rho_2)$-stabilizer of $X$. This contradicts Lemma~\ref{lemma:uniqueness}.

Second, Proposition~\ref{prop:characterization-edges} ensures that for almost every $y\in Y$, the map $\theta(y,\cdot)$ is a graph map, i.e.\ it preserves both adjacency and non-adjacency. 
\end{proof}

\subsection{Characterizing subgroupoids of separating-meridian type}\label{sec:characterization-separating}

In this section, we establish a purely groupoid-theoretic characterization of subgroupoids of separating-meridian type with respect to a strict action-type cocycle towards $\Mod^1(V)$, and derive that being of separating-meridian type is a notion that does not depend on the choice of such a cocycle.

\subsubsection{Property~$(\mathrm{P}_{\mathrm{sep}})$}

We proved in Proposition~\ref{prop:characterize-nonseparating} that for a subgroupoid $\calh\subseteq\calg$, being of nonseparating meridian type does not depend of the choice of an action-type cocycle $\calg\to\Mod^1(V)$. Also, it follows from Corollary~\ref{cor:adjacency-preserved} that compatibility of two subgroupoids of nonseparating meridian type is also independent of such a choice. Thus, the following notion is a purely groupoid-theoretic property.

\begin{de}[Property~$(\mathrm{P}_{\mathrm{sep}})$]
Let $\calg$ be a measured groupoid over a standard probability space $Y$ which admits a strict action-type cocycle towards $\Mod^1(V)$. A measured subgroupoid $\calh\subseteq\calg$ satisfies \emph{Property~$(\mathrm{P}_{\mathrm{sep}})$} if
\begin{enumerate}
\item $\calh$ contains a strongly Schottky pair of subgroupoids;
\item there exists a stably normal amenable subgroupoid $\calb\subseteq\calh$ of infinite type, such that for every measured subgroupoid $\calh'\subseteq\calg$ of nonseparating-meridian type, and every stably normal amenable subgroupoid $\cala\subseteq\calh'$, the intersection $\cala\cap\calb$ is stably trivial;
\item given any two subgroupoids $\calh_1,\calh_2\subseteq\calg$ of nonseparating-meridian type, and any Borel subset $U\subseteq Y$ of positive measure, assuming that $\calh_{|U}\subseteq (\calh_1\cap\calh_2)_{|U}$, then $(\calh_1)_{|U}$ and $(\calh_2)_{|U}$ are stably equal;
\item there exist $3g-4$ measured subgroupoids $\calh_1,\dots,\calh_{3g-4}$ of $\calg$ of nonseparating-meridian type, which are pairwise compatible, such that
\begin{enumerate}
\item for every Borel subset $U\subseteq Y$ of positive measure, and any two distinct $i,j\in\{1,\dots,3g-4\}$, one has $(\calh_i)_{|U}\neq (\calh_j)_{|U}$, and 
\item for every $j\in\{1,\dots,3g-4\}$, every stably normal amenable subgroupoid $\cala_j$ of $\calh_j$ is stably contained in $\calh$.
\end{enumerate}
\end{enumerate}
\end{de}

Notice that this notion is stable under restriction to a positive measure subset. Also, if $\calh$ is a subgroupoid of $\calg$, and if there exists a partition $Y=\dunion_{i\in I}Y_i$ into at most countably many Borel subsets such that for every $i\in I$, the subgroupoid $\calh_{|Y_i}$ of $\calg_{|Y_i}$ satisfies Property~$(\mathrm{P}_{\mathrm{sep}})$, then $\calh$ (as a subgroupoid of $\calg$) satisfies Property~$(\mathrm{P}_{\mathrm{sep}})$. 

To motivate the definition, we begin by proving that subgroupoids of separating meridian type have this property.

\begin{prop}\label{prop:characterise-separating-part1}
  Let $\calg$ be a measured groupoid over a standard probability space
  $Y$, equipped with a strict action-type cocycle
  $\rho:\calg\to\Mod^1(V)$. Let $c$ be a separating meridian, and let $\calh$ be the $(\calg,\rho)$-stabilizer of the isotopy class of $c$. 
  
  Then $\calh$ satisfies Property~$(\mathrm{P}_{\mathrm{sep}})$.
\end{prop}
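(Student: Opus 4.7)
The plan is to verify each of the four conditions defining Property~$\psep$ in turn, using the lemmas already established in the section. For condition~(1), since the genus of $V$ is at least $3$, at least one complementary component of $\partial V\setminus c$ is not a once-holed torus, and Lemma~\ref{lemma:pA-in-complement} provides two elements of $\Stab_{\Mod^0(V)}(c)$ generating a nonabelian free subgroup; applying Lemma~\ref{lemma:free} to these two elements produces a strongly Schottky pair of subgroupoids inside $\calh$.

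For condition~(2), I take $\calb=\rho^{-1}(\langle T_c^N\rangle)$, where $N$ is chosen so that $T_c^N\in\Mod^0(V)$. This $\calb$ is normal in $\calh$ since $T_c$ is central in $\Stab_{\Mod^0(V)}(c)$, amenable because $\rho$ has trivial kernel, and of infinite type because $\rho$ is action-type and $T_c$ has infinite order. If $\calh'\subseteq\calg$ is of nonseparating-meridian type with associated meridian $d$ on a given piece, and $\cala\subseteq\calh'$ is a stably normal amenable subgroupoid of infinite type, then Lemma~\ref{lemma:property-a} applied to $\calh'$ shows that $\cala$ is stably contained in $\rho^{-1}(A_d)$, where $A_d$ is either the cyclic subgroup generated by (a power of) $T_d$, or the rank-$2$ abelian group $\langle T_d,T_{d_X}\rangle\cap\Mod^0(V)$ when a complementary component $X$ of $d$ is a once-holed torus with unique nonseparating meridian $d_X$. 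Since $d$ (and $d_X$, if applicable) is nonseparating while $c$ is separating, the curves $d$, $d_X$, $c$ are pairwise non-isotopic, so by uniqueness of the Dehn multitwist decomposition $A_d\cap\langle T_c^N\rangle$ is trivial; hence $\cala\cap\calb$ is stably trivial.

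For condition~(3), suppose $\calh_{|U}\subseteq(\calh_1\cap\calh_2)_{|U}$ with $\calh_1,\calh_2$ of nonseparating-meridian type. Up to a countable partition, each $(\calh_i)_{|U}$ is the $(\calg_{|U},\rho)$-stabilizer of a single nonseparating meridian $d_i$. For every non-once-holed-torus complementary component $\Sigma$ of $\partial V\setminus c$, the construction in the proof of Lemma~\ref{lemma:pA-in-complement} yields a handlebody group element supported on $\Sigma$ and pseudo-Anosov on $\Sigma$; raising to an appropriate power, I obtain $\psi_\Sigma\in\Stab_{\Mod^0(V)}(c)$ whose $\rho$-preimage is of infinite type in $\calh_{|U}$ (by the action-type hypothesis). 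A nonzero power of $\psi_\Sigma$ must then fix $d_i$, and since $\psi_\Sigma$ acts as a pseudo-Anosov on $\Sigma$ and trivially off $\Sigma$, this forces $d_i$ to be disjoint from $\Sigma$ (using $d_i\neq c$). Running over all such $\Sigma$ shows that $d_i$ lies in a once-holed torus complementary component of $\partial V\setminus c$; since $g\geq 3$ forces at most one such component $X$ to exist, Lemma~\ref{lem:meridian-stab-containment} identifies $d_i$ with the unique nonseparating meridian $d_X\subset X$. Thus $d_1=d_2$ on every piece, yielding the stable equality of $(\calh_1)_{|U}$ and $(\calh_2)_{|U}$; and if $c$ has no once-holed torus complementary component, the hypothesis of~(3) is vacuous.

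For condition~(4), the task is to produce $3g-4$ pairwise disjoint, pairwise non-isotopic nonseparating meridians $c_1,\dots,c_{3g-4}$ of $V$, all disjoint from $c$. Cutting $V$ along the disk bounded by $c$ decomposes it into handlebodies $V_1,V_2$ of genera $g_1,g_2\geq 1$ with $g_1+g_2=g$; in each $V_i$ a standard construction (completing a cut system of $g_i$ nonseparating meridians of $V_i$ to a pants decomposition of the genus-$g_i$ subsurface of $\partial V$ bounded by $c$, by adding $2g_i-2$ curves on the resulting punctured sphere whose defining partitions each split at least one pair of cut-scars) yields $3g_i-2$ disjoint non-isotopic nonseparating meridians of $V_i$, all disjoint from $c$, giving the required $3g-4$ curves in total. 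Setting $\calh_j$ to be the $(\calg,\rho)$-stabilizer of $c_j$, pairwise compatibility and the distinctness condition~(a) follow from disjointness and non-isotopy of the $c_j$ combined with Lemma~\ref{lemma:uniqueness}. For~(b), Lemma~\ref{lemma:property-a} implies that any stably normal amenable subgroupoid $\cala_j\subseteq\calh_j$ is stably contained in $\rho^{-1}(A_{c_j})$; the key observation is that $c$ cannot lie inside a once-holed torus complementary component $X_j$ of $c_j$ (else $c$ would be separating in $X_j$, hence boundary-parallel, contradicting $c\neq c_j$), so $c$ is disjoint from both $c_j$ and from any associated $d_{X_j}$, giving $A_{c_j}\subseteq\Stab_{\Mod^0(V)}(c)$ as required. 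The main subtlety in the proof is the pants-decomposition-by-nonseparating-meridians construction entering~(4); the other three conditions follow directly from the preceding lemmas.
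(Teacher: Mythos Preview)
Your proof is correct and follows essentially the same approach as the paper's: verify each of the four clauses of Property~$\psep$ using Lemmas~\ref{lemma:pA-in-complement}, \ref{lemma:free}, \ref{lemma:property-a}, \ref{lemma:uniqueness}, and~\ref{lem:meridian-stab-containment}, with $\calb=\rho^{-1}(\langle T_c\rangle)$ for clause~(2) and a pants decomposition of $\partial V$ by $c$ together with $3g-4$ nonseparating meridians for clause~(4). One simplification you can make: since each $d$ (in clause~(2)) and each $c_j$ (in clause~(4)) is a \emph{nonseparating} meridian and $g\ge 3$, its complement $\partial V\setminus d$ is connected of genus $g-1\ge 2$, hence never a once-holed torus; so the subgroup $A_d$ from Lemma~\ref{lemma:property-a} is just $\langle T_d\rangle\cap\Mod^0(V)$, and your extra case analysis involving a hypothetical $d_X$ is unnecessary (though harmless).
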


\begin{proof}
  For Property~$\psep(1)$, Lemma~\ref{lemma:pA-in-complement} ensures that the stabilizer in $\Mod^1(V)$
  of any separating meridian contains a nonabelian free subgroup
  (recall that we are assuming that the genus of $V$ is at least
  $3$). Lemma~\ref{lemma:free} thus implies that $\calh$ contains a
  strongly Schottky pair of subgroupoids.

  For Property~$\psep(2)$, notice that the (intersection of $\Mod^1(V)$ with the) cyclic subgroup
  $\langle T_{c}\rangle$ generated by the Dehn twist about $c$ is
  normal in the stabilizer of $c$. Therefore
  $\calb=\rho^{-1}(\langle T_c\rangle)$ is a normal subgroupoid
  of $\calh$, which is amenable as $\rho$ has trivial kernel,
  and of infinite type because $\rho$ is action-type. 
  Let now $\calh'$ be a measured
  subgroupoid of $\calg$ of nonseparating meridian type, and let
  $\cala\subseteq\calh'$ be a stably normal amenable subgroupoid of
  infinite type. By Lemma~\ref{lemma:property-a}, we can find a partition $Y^*=\dunion_{i\in I}Y_i$ of a conull Borel subset $Y^*\subseteq Y$ into at most countably many Borel subsets such that for every $i\in I$, there exists a
  nonseparating meridian $d_i$ such that
  $\cala_{|Y_i}\subseteq\rho^{-1}(\langle T_{d_i}\rangle)_{|Y_i}$. It
  follows that $(\cala\cap\calb)_{|Y_i}$ is trivial, so
  $\cala\cap\calb$ is stably trivial.

  Property~$\psep(3)$ follows from the fact that for every separating
  meridian $c$, there is at most one nonseparating meridian $d$ fixed
  by every element of $\Stab_{\Mod^1(V)}(c)$ (in fact, the existence
  of such a $d$ occurs precisely when one of the two connected
  components of $\partial V\setminus c$ is a once-holed torus, in
  which case it contains a unique nonseparating meridian up to
  isotopy, and we take $d$ as such -- notice that we are using the
  fact that the genus of $V$ is at least $3$ here; see
  Lemma~\ref{lem:meridian-stab-containment}).

  We now prove that $\calh$ satisfies Property~$\psep(4)$. Let $\{c_1,\dots,c_{3g-4}\}$ be a set of $3g-4$
  pairwise non-isotopic nonseparating meridians which together with
  $c$ form a pair of pants decomposition of $\partial V$. For every
  $j\in\{1,\dots,3g-4\}$, let $\calh_j$ be the $(\calg,\rho)$-stabilizer
  of the isotopy class of $c_{j}$. Then $\calh_1,\dots,\calh_{3g-4}$
  are of nonseparating meridian type. Lemma~\ref{lemma:uniqueness}
  ensures that they satisfy Assertion~$(4.a)$. Finally,
  Lemma~\ref{lemma:property-a} ensures that every stably normal
  amenable subgroupoid $\cala_j$ of $\calh_j$ is stably contained in
  $\rho^{-1}(\langle T_{c_{j}}\rangle)$. In particular each
  $\cala_j$ is stably contained in $\calh$.
\end{proof}

\subsubsection{Characterization}

Our goal is now to characterize subgroups of separating meridian-type by proving the following proposition.

\begin{prop}\label{prop:characterise-separating}
Let $\calg$ be a measured groupoid over a standard probability space $Y$, equipped with a strict action-type cocycle $\rho:\calg\to\Mod^1(V)$. Let $\calh$ be a measured subgroupoid of $\calg$. The following assertions are equivalent.
\begin{enumerate}
\item The subgroupoid $\calh$ is of separating-meridian type with respect to $\rho$.
\item The subgroupoid $\calh$ satisfies Property~$(\mathrm{P}_{\mathrm{sep}})$, and is stably maximal among all measured subgroupoids of $\calg$ with respect to this property, i.e.\ if $\calh'$ is another subgroupoid satisfying Property~$\psep$, and if $\calh$ is stably contained in $\calh'$, then $\calh$ is stably equal to $\calh'$.
\end{enumerate}
\end{prop}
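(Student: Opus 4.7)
The plan is to prove the two implications in the order $(2) \Rightarrow (1)$ first, then $(1) \Rightarrow (2)$.

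For $(2) \Rightarrow (1)$, I would start from $\calh$ satisfying $\psep$ and stably maximal, and attach a canonical multicurve to $\calh$ which I then identify with a single separating meridian. Let $\calb \subseteq \calh$ be the stably normal amenable subgroupoid of infinite type from $\psep(2)$; after partitioning $Y$ via Lemma~\ref{lemma:crs}, let $X$ denote the canonical reduction multicurve of $(\calb,\rho)$. By Lemma~\ref{lemma:crs-normal}, $X$ is $(\calh,\rho)$-invariant. Property $\psep(1)$ together with Lemma~\ref{lemma:free} makes $\calh$ everywhere nonamenable, so Lemma~\ref{lemma:kida-ia} forces $X \neq \emptyset$; and rotationlessness of $\Mod^0(V)$ ensures $\rho(\calh)$ fixes each curve of $X$ individually. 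Meanwhile, $\psep(4)$, Proposition~\ref{prop:characterize-nonseparating} and Corollary~\ref{cor:adjacency-preserved} together produce $3g-4$ pairwise disjoint, non-isotopic nonseparating meridians $c_1,\dots,c_{3g-4}$ with $\rho^{-1}(\langle T_{c_j}\rangle)$ stably in $\calh$; each $T_{c_j}$ preserves $X$, so every $c_j$ is either in $X$ or disjoint from all of $X$, and $\psep(3)$ forbids two distinct $c_j$ from lying in $X$ simultaneously.

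The next step is topological: an Euler characteristic count on the complement $\partial V \setminus \bigcup_j c_j$, together with a dual-graph argument showing that a once-holed-torus piece would force one of the $c_j$ to be separating (contradicting our choice), rules out genus in the complementary pieces and leaves a single ``extra'' component which is a $4$-holed sphere. Since this $4$-holed sphere is the boundary of a $3$-ball in the handlebody cut open along the meridian disks of the $c_j$, every essential simple closed curve it contains is itself a meridian of $V$, and at most one such curve can sit inside the multicurve $X$. Together with the previous step, this yields $|X| \leq 2$, and every curve of $X$ is either some $c_j$ or a meridian of $V$ lying in the $4$-holed-sphere piece.

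The hard part will be to exclude the configurations in which $X$ contains a nonseparating meridian, reducing the analysis to $X = \{c\}$ for a single separating meridian. For this I plan to combine $\psep(2)$ with Lemma~\ref{lemma:property-a}: if $X$ contained a nonseparating meridian $c^*$ (either one of the $c_j$ or a nonseparating meridian inside the extra piece), then $\calh$ would stably lie inside $\rho^{-1}(\Stab_{\Mod^0(V)}(c^*))$, and the structural classification of stably normal amenable subgroupoids of nonseparating-meridian stabilizers (together with the fact that $\calh$ also contains twists about the other $c_l$) should force $\calb$ to have nontrivial stable intersection with the twist subgroupoid $\rho^{-1}(\langle T_{c^*}\rangle)$, contradicting the defining property of $\calb$ in $\psep(2)$. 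This leaves $X = \{c\}$ with $c$ separating; then $\calh \subseteq \rho^{-1}(\Stab_{\Mod^0(V)}(c))$, the latter satisfies $\psep$ by Proposition~\ref{prop:characterise-separating-part1}, and stable maximality promotes the inclusion to equality.

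For $(1) \Rightarrow (2)$, Proposition~\ref{prop:characterise-separating-part1} already gives that $\calh$ satisfies $\psep$. For stable maximality, if $\calh$ is the $(\calg,\rho)$-stabilizer of a separating meridian $c$ and $\calh \subseteq \calh'$ stably with $\calh'$ satisfying $\psep$, then by the direction $(2) \Rightarrow (1)$ proved above, $\calh'$ is itself the $(\calg,\rho)$-stabilizer of some separating meridian $c'$; the inclusion $\calh \subseteq \calh'$ makes $c'$ be $(\calh,\rho)$-invariant, and Lemma~\ref{lemma:uniqueness-disk} forces $c=c'$, so $\calh = \calh'$ stably.
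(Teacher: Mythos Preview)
Your overall architecture matches the paper's: both directions ultimately rest on the intermediate claim that any subgroupoid satisfying $\psep$ admits an $(\calh,\rho)$-invariant separating meridian (this is Lemma~\ref{lemma:separating} in the paper). However, there are two issues.

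\textbf{Circularity in $(1)\Rightarrow(2)$.} You write that ``by the direction $(2)\Rightarrow(1)$ proved above, $\calh'$ is itself the $(\calg,\rho)$-stabilizer of some separating meridian.'' But assertion~(2) includes stable maximality, which you have not established for $\calh'$. What your argument for $(2)\Rightarrow(1)$ actually proves, \emph{before} invoking maximality in the last line, is precisely the intermediate claim above. The paper isolates this as Lemma~\ref{lemma:separating} and then uses it (not the full implication) for both directions; you should do the same. This is easy to fix.

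\textbf{Gap in step 6.} Your plan for excluding a nonseparating meridian $c^*$ from $X$ is to apply Lemma~\ref{lemma:property-a} and force $\calb$ to meet $\rho^{-1}(\langle T_{c^*}\rangle)$ nontrivially, contradicting $\psep(2)$. This does not work: Lemma~\ref{lemma:property-a} classifies stably normal amenable subgroupoids of the \emph{full} stabilizer of $c^*$, whereas $\calb$ is only normal in $\calh$. More seriously, the conclusion you want can simply be false --- if the complement of $c^*$ is active for $(\calb,\rho)$, then $\calb$ may consist entirely of partial pseudo-Anosovs on $\partial V\setminus c^*$ and have trivial intersection with the twist subgroupoid. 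The paper handles this step differently, splitting into two cases. When $X$ consists of two nonseparating meridians, Property~$\psep(3)$ applied to their two stabilizers (together with Lemma~\ref{lemma:uniqueness}) gives an immediate contradiction. When $X=\{c^*\}$ is a single nonseparating meridian, the paper (Lemma~\ref{lemma:crs-for-special-meridian}) uses $\psep(2)$ in the \emph{opposite} direction: since $\calb\cap\rho^{-1}(\langle T_{c^*}\rangle)$ is stably trivial, the cocycle $\rho':\calh\to\Mod^0(\partial V\setminus c^*)$ has trivial kernel on $\calb$, so $(\calb,\rho')$ is irreducible; then Lemma~\ref{lemma:kida-ia} forces the strongly Schottky pair in $\calh$ to generate an amenable groupoid, a contradiction. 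Also note that the paper does not insist on $|X|=1$ in the final case: $X$ may contain a separating meridian together with one of the $c_j$, and one only needs the separating curve to be $(\calh,\rho)$-invariant.
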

 
Before turning to the proof of Proposition~\ref{prop:characterise-separating}, we record the following consequence.

\begin{cor}\label{cor:separating-type}
Let $\calg$ be a measured groupoid over a base space $Y$, equipped with two strict action-type cocycles $\rho_1,\rho_2:\calg\to\Mod^1(V)$, and let $\calh\subseteq\calg$ be a measured subgroupoid. 

Then $\calh$ is of separating-meridian type with respect to $\rho_1$ if and only if it is of separating-meridian type with respect to $\rho_2$. \qed
\end{cor}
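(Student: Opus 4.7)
The plan is to deduce this corollary directly from Proposition~\ref{prop:characterise-separating}, which characterizes subgroupoids of separating-meridian type (with respect to a strict action-type cocycle $\rho$) as being precisely the stably maximal measured subgroupoids of $\calg$ satisfying Property~$\psep$. The key point will be to observe that Property~$\psep$ is phrased in a way that does not actually refer to the cocycle, so that the stably maximal subgroupoids with this property form a cocycle-independent collection.

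To make this precise, I would go through the four clauses in the definition of Property~$\psep$ and verify that each uses only cocycle-independent notions. Clause~(1) requires the existence of a strongly Schottky pair of subgroupoids, which is defined purely in terms of amenability, infinite type, and stable triviality of normal amenable subgroupoids (Definition~\ref{de:strong-schottky}); all of these are groupoid-theoretic. Clause~(2) invokes measured subgroupoids of nonseparating-meridian type together with their stably normal amenable subgroupoids of infinite type; by Proposition~\ref{prop:characterize-nonseparating}, the property of being of nonseparating-meridian type does not depend on the choice of action-type cocycle, and the remaining notions are again purely groupoid-theoretic. Clause~(3) also only mentions subgroupoids of nonseparating-meridian type, so the same reasoning applies. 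Clause~(4) requires the existence of $3g-4$ pairwise compatible subgroupoids of nonseparating-meridian type, where compatibility of such subgroupoids is shown to be cocycle-independent in Corollary~\ref{cor:adjacency-preserved}, and the auxiliary conditions~(4.a) and~(4.b) only refer to groupoid-theoretic notions (stable equality, stably normal amenable subgroupoids, stable containment).

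Combining these observations, whether a given subgroupoid $\calh\subseteq\calg$ satisfies Property~$\psep$ is intrinsic to the groupoid structure of $\calg$ and does not depend on whether we use $\rho_1$ or $\rho_2$. Consequently, the property of being stably maximal among measured subgroupoids with Property~$\psep$ is also cocycle-independent. Applying Proposition~\ref{prop:characterise-separating} twice, once with $\rho_1$ and once with $\rho_2$, we obtain that $\calh$ is of separating-meridian type with respect to $\rho_1$ if and only if $\calh$ is stably maximal satisfying Property~$\psep$, if and only if $\calh$ is of separating-meridian type with respect to $\rho_2$. There is no substantial obstacle here: the entire content of the corollary is the bookkeeping verification that Property~$\psep$ was formulated so as to be intrinsic, which in turn was the whole purpose of the work carried out in Sections~\ref{sec:characterization-nonseparating} and~\ref{sec:characterization-separating}.
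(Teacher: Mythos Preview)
Your proposal is correct and matches the paper's intended argument exactly. The paper simply marks the corollary with \qed because, immediately before introducing Property~$\psep$, it already explains that this property is purely groupoid-theoretic (by Proposition~\ref{prop:characterize-nonseparating} and Corollary~\ref{cor:adjacency-preserved}); your write-up just spells out that justification in full.
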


Our goal is now to prove Proposition~\ref{prop:characterise-separating}. Our first lemma exploits the first two assumptions of Property~$(\mathrm{P}_{\mathrm{sep}})$ in order to derive information about the possible canonical reduction multicurves of $\calb$.

\begin{lemma}\label{lemma:crs-for-special-meridian}
Let $\calg$ be a measured groupoid over a standard probability space $Y$, equipped with a strict action-type cocycle $\rho:\calg\to\Mod^1(V)$. Let $\calb,\calh$ be measured subgroupoids of $\calg$, with $\calb\subseteq\calh$. Assume that 
\begin{enumerate}
\item $\calh$ contains a strongly Schottky pair of subgroupoids;
\item $\calb$ is amenable and of infinite type, and stably normal in $\calh$;
\item for every measured subgroupoid $\calh'\subseteq\calg$ of nonseparating-meridian type, and every stably normal amenable subgroupoid $\cala\subseteq\calh'$, the intersection $\cala\cap\calb$ is stably trivial.
\end{enumerate}
Then for every Borel subset $U\subseteq Y$ of positive measure, the pair $(\calb_{|U},\rho)$ cannot have a canonical reduction multicurve consisting of a single nonseparating meridian.
\end{lemma}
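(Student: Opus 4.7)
The plan is to argue by contradiction: suppose that $(\calb_{|U},\rho)$ has canonical reduction multicurve equal to $\{c\}$ for some nonseparating meridian $c$ and some Borel subset $U\subseteq Y$ of positive measure. Write $\Sigma=\partial V\setminus c$, a connected surface whose unique component has negative Euler characteristic (since the genus of $V$ is at least $3$), and let $\rho_\Sigma$ denote the cocycle obtained by composing $\rho$ with the restriction homomorphism $\Stab_{\Mod^0(V)}(c)\to\Mod^0(\Sigma)$, which is defined on the subgroupoid $\rho^{-1}(\Stab_{\Mod^0(V)}(c))\subseteq\calg$.

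My first step is to analyse the canonical reduction set $\calc$ itself. Since the boundary multicurve of $\calc$ is $\{c\}$ and $c$ is nonseparating, the filled subsurface of $\calc$ is either an annular neighbourhood of $c$, in which case $\calc=\{c\}$, or the closure of $\Sigma$. The second case would force $\calc$ to contain a finite collection of curves filling $\Sigma$, so $\rho(\calb_{|U^*})$ would fix each such curve and hence restrict trivially to $\Sigma$, placing it inside $\langle T_c\rangle\cap\Mod^0(V)$; but then every essential simple closed curve on $\partial V$ disjoint from $c$ would be stably $(\calb_{|U},\rho)$-invariant, so $\calc$ would fill all of $\partial V$ and have empty boundary multicurve, contradicting our assumption. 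Hence $\calc=\{c\}$, and in particular no non-peripheral essential simple closed curve of $\Sigma$ is stably $(\calb_{|U},\rho)$-invariant, which translates to $(\calb_{|U},\rho_\Sigma)$ being irreducible.

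Because $\calh$ stably normalizes $\calb$ and $c\in\calc$, Lemma~\ref{lemma:crs-normal} yields, after passing to a conull Borel subset $U^*\subseteq U$, that $\rho(\calh_{|U^*})\subseteq\Stab_{\Mod^0(V)}(c)$, so $\rho_\Sigma$ is defined on $\calh_{|U^*}$. Using Assumption~(1), let $(\cala^1,\cala^2)\subseteq\calh$ be a strongly Schottky pair, and by Definition~\ref{de:strong-schottky} applied with $U^*$, choose a positive measure Borel subset $V\subseteq U^*$ such that every normal amenable subgroupoid of $\langle\cala^1_{|V},\cala^2_{|V}\rangle$ is stably trivial. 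The kernel $\calk$ of $\rho_\Sigma$ on $\langle\cala^1_{|V},\cala^2_{|V}\rangle$ is normal (as the kernel of a cocycle) and amenable, since its $\rho$-image lies in the cyclic subgroup $\langle T_c\rangle\cap\Mod^0(V)$. The strong Schottky property thus forces $\calk$ to be stably trivial, so on a partition $V=\dunion_i V_i$ the cocycle $\rho_\Sigma$ has trivial kernel when restricted to each $\langle\cala^1_{|V_i},\cala^2_{|V_i}\rangle$.

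Fix some $V_i$ of positive measure. The subgroupoid $\langle\cala^1_{|V_i},\cala^2_{|V_i}\rangle$ lies inside $\calh_{|V_i}$ and hence stably normalizes $\calb_{|V_i}$; combined with the irreducibility of $(\calb_{|V_i},\rho_\Sigma)$ inherited from Step~1 and the triviality of the kernel of $\rho_\Sigma$ on this subgroupoid, Lemma~\ref{lemma:kida-ia} applied to $\Sigma$ yields that $\langle\cala^1_{|V_i},\cala^2_{|V_i}\rangle$ is amenable. However, by stability of the strong Schottky pair property under restriction, $(\cala^1_{|V_i},\cala^2_{|V_i})$ remains a strongly Schottky pair on $V_i$, which forces $\langle\cala^1_{|V_i},\cala^2_{|V_i}\rangle$ to be nonamenable, producing the desired contradiction. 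The main obstacle in this plan is the careful analysis of the canonical reduction set in Step~1, where one must rule out the possibility that $\calc$ is large enough to fill $\Sigma$; once that case is excluded, Assumption~(1) supplies, through the Schottky pair, precisely the subgroupoid on which Lemma~\ref{lemma:kida-ia} can be invoked to close the argument.
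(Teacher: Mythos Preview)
There is a genuine gap in your Step~1. You correctly identify the dichotomy (the filled subsurface of $\calc$ is either the annular neighbourhood of $c$ or the closure of $\Sigma$), and you correctly deduce that in the second case $\rho(\calb_{|U^*})\subseteq\langle T_c\rangle\cap\Mod^0(V)$. However, the conclusion you draw from this is false: if $\rho(\calb_{|U^*})\subseteq\langle T_c\rangle$, then $\calc$ consists only of curves disjoint from $c$ (together with $c$ itself), and such curves fill the closure of $\Sigma$, \emph{not} all of $\partial V$. The boundary multicurve of $\calc$ is therefore still $\{c\}$, and no contradiction arises. Put differently, nothing in your argument rules out the possibility that $\calb_{|U}$ maps into the twist subgroup $\langle T_c\rangle$, in which case $(\calb_{|U},\rho_\Sigma)$ is certainly \emph{not} irreducible and your application of Lemma~\ref{lemma:kida-ia} fails.

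This is exactly where Assumption~(3) enters, and your proof never invokes it. The paper uses Assumption~(3) with $\calh'$ the $(\calg,\rho)$-stabilizer of $c$ and $\cala=\rho^{-1}(\langle T_c\rangle)$ to conclude that $\calb\cap\rho^{-1}(\langle T_c\rangle)$ is stably trivial. On a positive measure subset $W$ where this intersection is trivial, the cocycle $\rho_\Sigma$ then has trivial kernel on $\calb_{|W}$; combined with $\calb$ being of infinite type, this forces $(\calb_{|W},\rho_\Sigma)$ to be irreducible (the ``bad'' second case of your dichotomy would make $\calb_{|W}$ lie in the kernel of $\rho_\Sigma$, hence be trivial). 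From there your Steps~2--3 go through essentially as written. So the fix is localized: insert the appeal to Assumption~(3) to eliminate the possibility $\calb_{|U^*}\subseteq\rho^{-1}(\langle T_c\rangle)$, and the rest of your argument is sound.
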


\begin{proof}
Assume towards a contradiction that $(\calb_{|U},\rho)$ has a canonical reduction multicurve which is reduced to a single nonseparating meridian $c$. As $\calb$ is stably normal in $\calh$, up to restricting to a positive measure Borel subset of $U$, we can assume that $c$ is $(\calh_{|U},\rho)$-invariant (Lemma~\ref{lemma:crs-normal}). In particular, letting $\Sigma=\partial V\setminus c$, we have a natural cocycle $\rho':\calh_{|U}\to\Mod^0(\Sigma)$. In view of the description of curve stabilizers recalled in Section~\ref{sec:bg}, the kernel of $\rho'$ is contained in $\rho^{-1}(\langle T_c\rangle)_{|U}$. As $\rho$ has trivial kernel, it follows that the kernel of $\rho'$ is amenable. In particular, letting $(\cala^1,\cala^2)$ be a strongly Schottky pair of subgroupoids of $\calh$, there exists a positive measure Borel subset $V\subseteq U$ such that $\rho'$ has trivial kernel when restricted to $\langle \cala^1_{|V},\cala^2_{|V}\rangle$.

Our third assumption, applied by taking for $\calh'$ the $(\calg,\rho)$-stabilizer of $c$, and with $\cala=\rho^{-1}(\langle T_c\rangle)$, ensures that $\calb\cap\rho^{-1}(\langle T_c\rangle)$ is stably trivial. Let $W\subseteq V$ be a Borel subset of positive measure such that $(\calb\cap \rho^{-1}(\langle T_c\rangle))_{|W}$ is trivial. Then $\rho'$ also has trivial kernel when restricted to $\calb_{|W}$. In particular $(\calb_{|W},\rho')$ is irreducible, and Lemma~\ref{lemma:kida-ia} implies that $\langle\cala^1_{|W},\cala^2_{|W}\rangle$ is amenable, a contradiction.   
\end{proof}

\begin{lemma}\label{lemma:separating}
Let $\calg$ be a measured groupoid over a standard probability space $Y$, equipped with a strict action-type cocycle $\rho:\calg\to\Mod^1(V)$. Let $\calh$ be a measured subgroupoid of $\calg$ which satisfies Property~$(\mathrm{P}_{\mathrm{sep}})$. 

Then there exists a partition $Y=\dunion_{i\in I}Y_i$ into at most countably many Borel subsets such that for every $i\in I$, there exists an $(\calh_{|Y_i},\rho)$-invariant isotopy class of separating meridian. 
\end{lemma}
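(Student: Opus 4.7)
The plan is to locate a separating meridian in the canonical reduction set of the amenable subgroupoid $\calb\subseteq\calh$ furnished by Property~$\psep(2)$, and then invoke rotationlessness of $\Mod^0(V)$ to upgrade set-invariance to individual invariance. First, apply Lemma~\ref{lemma:crs} to partition $Y$ into countably many Borel subsets on each of which $(\calb,\rho)$ has a canonical reduction set $\calc$ with boundary multicurve $X$. After refining so that $\calb$ is normal in $\calh$ on each piece, Lemma~\ref{lemma:crs-normal} implies $\calc$ is $(\calh,\rho)$-invariant as a set. Since $\psep(1)$ forces $\calh$ to be everywhere nonamenable, Lemma~\ref{lemma:kida-ia} precludes $(\calb,\rho)$ being irreducible, so $\calc$ is nonempty; and Lemma~\ref{lemma:crs-for-special-meridian}, whose three hypotheses are exactly $\psep(1)$ together with the two parts of $\psep(2)$, further rules out $X$ consisting of a single nonseparating meridian.

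Next, Property~$\psep(4)$ provides $3g-4$ subgroupoids $\calh_1,\dots,\calh_{3g-4}$ of nonseparating-meridian type. After further partitioning, Proposition~\ref{prop:characterize-nonseparating} together with Lemma~\ref{lemma:property-a}, Corollary~\ref{cor:adjacency-preserved} and Lemma~\ref{lemma:uniqueness} let us identify each $\calh_j$ on each piece with the $(\calg,\rho)$-stabilizer of a specific nonseparating meridian $c_j$, with $c_1,\dots,c_{3g-4}$ pairwise disjoint and pairwise non-isotopic, and each $\rho^{-1}(\langle T_{c_j}\rangle)$ stably contained in $\calh$. Every $(\calh,\rho)$-invariant curve must then be preserved by each twist $T_{c_j}$, hence either isotopic to some $c_j$ or contained up to isotopy in the unique non-pants connected component $S$ of $\partial V\setminus(c_1\cup\dots\cup c_{3g-4})$. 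A count of Euler characteristics shows $S$ is either a 4-holed sphere or a 1-holed torus. Moreover, $\psep(3)$ combined with Lemma~\ref{lemma:uniqueness} forbids $\calh$ from fixing two distinct nonseparating meridians, so at most one $c_j$ can be $(\calh,\rho)$-invariant.

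The central combinatorial step is to pinpoint a separating meridian inside $\calc$ from these constraints. In the 4-holed sphere case, exactly one of the three isotopy classes of essential simple closed curves in $S$ is a separating meridian $c_0$ of $V$ (the one completing $c_1,\dots,c_{3g-4}$ to a \emph{meridian} pair of pants decomposition), and the combination of $X$ not being a single nonseparating meridian, $\calc$ containing at most one $c_j$, and the two non-meridian candidates in $S$ being unavailable for an amenable multitwist supported in $\Mod(V)$ (their twists are not in $\Mod(V)$ by the characterisation of Dehn twists in handlebody groups after Lemma~\ref{lemma:pA-in-complement}), should force $c_0\in\calc$. The 1-holed torus case must be ruled out: the only meridian in such an $S$ is nonseparating, so the joint constraints from $\psep(3)$ and Lemma~\ref{lemma:crs-for-special-meridian} leave no admissible configuration for $X$. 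Once $c_0\in\calc$ has been identified on each piece, the finite $(\calh,\rho)$-invariant subset of $\calc$ consisting of separating meridians is pointwise fixed by $\calh$ via rotationlessness of $\Mod^0(V)$, completing the proof.

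The hardest step will be the rigorous elimination of the 1-holed torus configuration, which requires showing that no stably normal amenable subgroupoid of $\calh$ of infinite type can simultaneously avoid all twists about nonseparating meridians (as demanded by $\psep(2)$), be preserved by conjugation by the $T_{c_j}$, and have canonical reduction multicurve neither empty nor a single nonseparating meridian, once the structure of $\partial V\setminus(c_1\cup\dots\cup c_{3g-4})$ is constrained to the 1-holed torus shape.
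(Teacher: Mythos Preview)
Your overall architecture matches the paper's, but two concrete errors derail the argument. First, the ``1-holed torus case'' you flag as the hardest step cannot occur: a complementary component that is a once-holed torus has a single boundary curve, and any simple closed curve bounding a once-holed torus on $\partial V$ is separating. Since all the $c_j$ are \emph{nonseparating}, no component of $\partial V\setminus(c_1\cup\dots\cup c_{3g-4})$ can be a once-holed torus; the unique non-pants component is automatically a $4$-holed sphere. The paper does not treat this case because it is vacuous.

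Second, and more seriously, your elimination in the $4$-holed sphere case rests on the claim that two of the three essential curves in $S$ are non-meridians, so that their Dehn twists lie outside $\Mod(V)$. This is false: every essential simple closed curve in $S$ is a meridian. Indeed, any such curve $\gamma$ cuts $S$ into two pairs of pants, each bounded by $\gamma$ and two of the boundary curves of $S$; since those boundary curves are among the $c_j$ (hence meridians), a pair-of-pants argument shows $\gamma$ is null-homotopic in $V$, hence a meridian. The paper states this explicitly. The correct elimination is the one the paper uses: the boundary multicurve $X$ consists of at most one $c_j$ and at most one curve in $S$; if that curve in $S$ is nonseparating (or absent), then $X$ is either a single nonseparating meridian (ruled out by Lemma~\ref{lemma:crs-for-special-meridian}) or a pair of distinct nonseparating meridians (ruled out by Property~$\psep(3)$ together with Lemma~\ref{lemma:uniqueness}, since $\calh$ would then sit inside two distinct nonseparating-meridian stabilizers). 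What remains is that $X$ contains a separating meridian in $S$, which is the desired invariant curve.
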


\begin{proof}
Let $\calh_1,\dots,\calh_{3g-4}$ be subgroupoids of $\calg$ provided by Property~$(\mathrm{P}_{\mathrm{sep}})(4)$. Up to partitioning $Y$ into at most countably many Borel subsets, we can assume that for every $j\in\{1,\dots,3g-4\}$, the groupoid $\calh_j$ is equal to the $(\calg,\rho)$-stabilizer of the isotopy class of a nonseparating meridian $d_j$. 

Let $\calb\subseteq\calh$ be as in Property~$(\mathrm{P}_{\mathrm{sep}})(2)$. By Lemma~\ref{lemma:crs}, we can find a partition $Y=\dunion_{i\in I}Y_i$ into at most countably many Borel subsets of positive measure such that for every $i\in I$, the pair $(\calb_{|Y_i},\rho)$ has a canonical reduction set $\calc_i$, with boundary multicurve $X_i$. As $\calb$ is stably normal in $\calh$, up to refining this partition, we can assume that for every $i\in I$, the isotopy class of the multicurve $X_i$ is $(\calh_{|Y_i},\rho)$-invariant. 

We first observe that for every $i\in I$, one has $\calc_i\neq\emptyset$. Indeed, otherwise $(\calb_{|Y_i},\rho)$ is irreducible. As $\calb$ is amenable and stably normal in $\calh$, and $\rho$ has trivial kernel, Lemma~\ref{lemma:kida-ia} implies that $\calh_{|Y_i}$ is amenable, contradicting Property~$(\mathrm{P}_{\mathrm{sep}})(1)$.

For every $j\in\{1,\dots,3g-4\}$, let $T_j$ be the Dehn twist about the meridian $d_j$. Then $\cala_j=\rho^{-1}(\langle T_j\rangle)$ is a normal amenable subgroupoid of $\calh_j$. Property~$(\mathrm{P}_{\mathrm{sep}})(4.b)$ thus ensures that $\cala_j$ is stably contained in $\calh$. Therefore, for every curve $c$ in $X_i$, there exists a positive integer $k$ such that the isotopy class of $c$ is fixed by $T_j^k$. This implies that $X_i$ is disjoint (up to isotopy) from all meridians $d_j$. 

We now claim that for every $i\in I$, the multicurve $X_i$ contains at most one of the curves $d_j$. Indeed, assume by contradiction that it contains two curves $d_{j_1}$ and $d_{j_2}$. Then $\calh_{|Y_i}$ is contained in $(\calh_{j_1}\cap \calh_{j_2})_{|Y_i}$. As $\calh_{j_1}$ and $\calh_{j_2}$ are of nonseparating meridian type with respect to $\rho$, Property~$(\mathrm{P}_{\mathrm{sep}})(3)$ implies that there exists a positive measure Borel subset $U\subseteq Y_i$ such that $(\calh_{j_1})_{|U}=(\calh_{j_2})_{|U}$, contradicting Property~$(\mathrm{P}_{\mathrm{sep}})(4.a)$. 

As $\{d_1,\dots,d_{3g-4}\}$ is a set of $3g-4$ pairwise disjoint and pairwise non-isotopic nonseparating simple closed curves on $\partial V$, one of the complementary components of the union of all curves $d_j$ is a $4$-holed sphere $S$. Notice that every essential simple closed curve contained in $S$ is a meridian, and $X_i$ may contain such a curve. 

At this point we know that $X_i$ contains at most one of the curves $d_1,\dots,d_{3g-4}$ and is disjoint from them up to isotopy, so any other curve in $X_i$ must be contained in the complementary $4$-holed sphere. This leaves three possibilities for the canonical reduction multicurve of $(\calb_{|Y_i},\rho)$, namely:
\begin{enumerate}
\item a single nonseparating meridian (either one of the meridians $d_j$, or else a nonseparating meridian contained in $S$);
\item the union of a nonseparating meridian $d_j$ and a nonseparating essential simple closed curve (in fact a meridian) contained in $S$;
\item a separating (on $\partial V$) essential simple closed curve (in fact a meridian) contained in $S$, possibly together with a meridian $d_j$.
\end{enumerate} 
The first case is excluded by Lemma~\ref{lemma:crs-for-special-meridian}, the second case is excluded using Property~$(\mathrm{P}_{\mathrm{sep}})(3)$ and Lemma~\ref{lemma:uniqueness}, and the last case leads to the desired conclusion of our lemma.
\end{proof}

\begin{proof}[Proof of Proposition~\ref{prop:characterise-separating}]
We first prove that $(1)\Rightarrow (2)$. Let $\calh$ be a measured subgroupoid of $\calg$ of separating meridian type with respect to $\rho$, and let $Y^*=\dunion_{i\in I} Y_i$ be a partition of a conull Borel subset $Y^*\subseteq Y$ into at most countably many Borel subsets, such that for every $i\in I$, the groupoid $\calh_{|Y_i}$ is equal to the $(\calg_{|Y_i},\rho)$-stabilizer of the isotopy class of a separating meridian $c_i$.

Proposition~\ref{prop:characterise-separating-part1} implies that $\calh$ satisfies Property~$\psep$. We need to check that $\calh$ is stably maximal among all measured subgroupoids of $\calg$ that satisfy Property~$\psep$. So let $\calh'$ be a measured subgroupoid of $\calg$ which satisfies Property~$\psep$, and such that $\calh$ is stably contained in $\calh'$. By Lemma~\ref{lemma:separating}, up to refining the above partition of $Y$, we can assume that for every $i\in I$, there exists a separating meridian $c'_i$ whose isotopy class is $(\calh'_{|Y_i},\rho)$-invariant. Lemma~\ref{lemma:uniqueness-disk} implies that $c_i=c'_i$ for every $i\in I$. It follows that $\calh'$ is stably contained in $\calh$, so they are stably equal. This completes our proof of the implication $(1)\Rightarrow (2)$.

We now prove that $(2)\Rightarrow (1)$, so let $\calh$ be a measured groupoid of $\calg$ that satisfies Assertion~$(2)$. By Lemma~\ref{lemma:separating}, we can find a partition $Y=\dunion_{i\in I}Y_i$ into at most countably many Borel subsets such that for every $i\in I$, there exists a separating meridian $c_i$ whose isotopy class is $(\calh_{|Y_i},\rho)$-invariant. Let $\calh'$ be a measured subgroupoid of $\calg$ such that for every $i\in I$, the groupoid $\calh'_{|Y_i}$ is equal to the $(\calg_{|Y_i},\rho)$-stabilizer of the isotopy class of $c_i$. Then $\calh$ is stably contained in $\calh'$. In addition, $\calh'$ is of separating meridian type, so Proposition~\ref{prop:characterise-separating-part1} shows that $\calh'$ satisfies Property~$(\mathrm{P}_{\mathrm{sep}})$. The maximality assumption on $\calh$ therefore implies that $\calh$ is stably equal to $\calh'$. Hence $\calh$ itself is of separating meridian type, which concludes our proof.  
\end{proof}

\subsection{Conclusion}

Before concluding the proof of our main theorem, we first record the following easy consequence of Propositions~\ref{prop:characterize-nonseparating} and~\ref{prop:characterise-separating}.

\begin{prop}\label{prop:meridian-type}
Let $\calg$ be a measured groupoid, equipped with two strict action-type cocycles $\rho_1,\rho_2:\calg\to\Mod^1(V)$, and let $\calh\subseteq\calg$ be a measured subgroupoid. 

Then $\calh$ is of meridian type with respect to $\rho_1$ if and only if it is of meridian type with respect to $\rho_2$.
\qed
\end{prop}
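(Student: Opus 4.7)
The plan is to reduce directly to the two preceding invariance results. Recall that Proposition~\ref{prop:characterize-nonseparating} shows that being of nonseparating-meridian type is independent of the choice of action-type cocycle, and Corollary~\ref{cor:separating-type} establishes the analogous statement for separating-meridian type. Since the definition of meridian type (Definition~\ref{de:type}) permits a base-space partition that mixes separating and nonseparating meridians, the key observation is that any such partition can be split cleanly along this dichotomy.

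Concretely, I would start with a conull Borel subset $Y^{*}\subseteq Y$ and a countable partition $Y^{*}=\dunion_{i\in I}Y_{i}$ together with meridians $c_{i}$ witnessing that $\calh$ is of meridian type with respect to $\rho_{1}$. Subdivide the index set as $I=I^{\nsep}\dunion I^{\sep}$ according to whether $c_{i}$ is nonseparating or separating, and set $Y^{\nsep}=\dunion_{i\in I^{\nsep}}Y_{i}$ and $Y^{\sep}=\dunion_{i\in I^{\sep}}Y_{i}$. By construction, the subgroupoid $\calh_{|Y^{\nsep}}\subseteq\calg_{|Y^{\nsep}}$ is of nonseparating-meridian type with respect to $\rho_{1}$, and $\calh_{|Y^{\sep}}\subseteq\calg_{|Y^{\sep}}$ is of separating-meridian type with respect to $\rho_{1}$.

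Noting that the restriction of an action-type cocycle to any positive-measure Borel subset of the base is again action-type (immediate from the definition), I would then apply Proposition~\ref{prop:characterize-nonseparating} on $\calg_{|Y^{\nsep}}$ with the pair $(\rho_{1},\rho_{2})$ to conclude that $\calh_{|Y^{\nsep}}$ is of nonseparating-meridian type with respect to $\rho_{2}$, and similarly Corollary~\ref{cor:separating-type} on $\calg_{|Y^{\sep}}$ to conclude that $\calh_{|Y^{\sep}}$ is of separating-meridian type with respect to $\rho_{2}$. Concatenating the two resulting Borel partitions and lists of meridians — which Definition~\ref{de:type} allows to contain meridians of either type — yields a single partition of $Y^{*}$ exhibiting $\calh$ as a subgroupoid of meridian type with respect to $\rho_{2}$. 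The converse direction is obtained by interchanging the roles of $\rho_{1}$ and $\rho_{2}$.

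There is no real obstacle in this argument: the substantive work has already been done in Propositions~\ref{prop:characterize-nonseparating} and~\ref{prop:characterise-separating} (via Corollary~\ref{cor:separating-type}), and what remains is purely a matter of bookkeeping with countable Borel partitions of the base space.
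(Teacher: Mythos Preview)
Your argument is correct and is precisely the intended one: the paper records this proposition as an ``easy consequence of Propositions~\ref{prop:characterize-nonseparating} and~\ref{prop:characterise-separating}'' and marks it with a \qed, leaving exactly the partition-and-recombine bookkeeping you spell out to the reader.
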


We will now simply say that $\calh$ is \emph{of meridian type} to mean that it is of meridian type with respect to any action-type cocycle $\calg\to\Mod^1(V)$. We are now in position to complete the proof of Theorem~\ref{theo:main-2}, which as we already explained at the beginning of this section yields the measure equivalence superrigidity of handlebody groups in genus at least $3$.

\begin{proof}[Proof of Theorem~\ref{theo:main-2}]
Let $\calg$ be a measured groupoid over a standard probability space $Y$, and let $\rho_1,\rho_2:\calg\to\Mod^1(V)$ be two strict action-type cocycles. Let $\mathbb{D}$ be the disk graph of $V$: we recall that its vertices are the isotopy classes of meridians in $\partial V$, and two such isotopy classes are joined by an edge if they have disjoint representatives.
  
Proposition~\ref{prop:meridian-type} ensures that for every vertex $v\in V(\mathbb{D})$, there exists a Borel map $\phi_{v}:Y\to V(\mathbb{D})$ such that for every $w\in V(\mathbb{D})$, letting $Y_{v,w}=\phi_v^{-1}(w)$, the $(\calg_{|Y_{v,w}},\rho_1)$-stabilizer of $v$ is stably equal to the $(\calg_{|Y_{v,w}},\rho_2)$-stabilizer of $w$. Lemmas~\ref{lemma:uniqueness} and~\ref{lemma:uniqueness-disk} ensure that the map $\phi_v$ is essentially unique. 

For every $y\in Y$ and every $v\in V(\mathbb{D})$, we then let $\psi(y,v)=\phi_{v}(y)$. This defines a Borel map $\psi:Y\times V(\mathbb{D})\to V(\mathbb{D})$. 

We claim that for a.e.\ $y\in Y$, the map $\psi(y,\cdot)$ is a graph automorphism of $\mathbb{D}$. Indeed, injectivity follows from the same argument as in the proof of Proposition~\ref{prop:characterize-nonseparating}, and the fact that $\psi(y,\cdot)$ is almost everywhere a graph map follows from Corollary~\ref{cor:adjacency-preserved}. We now show that for almost every $y\in Y$, the map $\psi(y,\cdot)$ is surjective. So let $c$ be a meridian. By Proposition~\ref{prop:meridian-type}, there exists a Borel partition of a conull Borel subset $Y^*\subseteq Y$ into at most countably many Borel subsets $Y_i$ such that for every $i$, the $(\calg_{|Y_i},\rho_2)$-stabilizer of $c$ coincides with the $(\calg_{|Y_i},\rho_1)$-stabilizer of some $c_i\in V(\mathbb{D})$. It follows that $\psi(y,c_i)=c$ for almost every $y\in Y_i$. Surjectivity follows.

By the main theorem of \cite{KS}, the natural map $\mathrm{Mod}^\pm(V)\to\mathrm{Aut}(\mathbb{D})$ is
an isomorphism (noting again that the genus of $V$ is at least $3$). We can thus find a
Borel map $\theta:Y \to \mathrm{Mod}^\pm(V)$ so that for a.e.\ $y\in Y$ we have that 
$\psi(y,\delta) = \theta(y)(\delta)$ for all meridians.

We are left with showing that $\theta$ satisfies the equivariance condition required in Theorem~\ref{theo:main-2}. This amounts to proving that there exists a conull Borel subset $Y^*\subseteq Y$ such that for every $g\in\calg_{|Y^*}$ and every vertex $v\in V(\mathbb{D})$, one has $\psi(r(g),\rho_1(g)v)=\rho_2(g)\psi(s(g),v).$ As $\calg$ is a countable union of bisections, it is enough to prove it for almost every $g$ in a bisection $B$ (inducing a Borel isomorphism between $U=s(B)$ and $V=r(B)$). Up to further partitioning $B$, we can assume that $(\rho_1)_{|B}$ and $(\rho_2)_{|B}$ are constant, with values $\gamma_1,\gamma_2$, and that $\psi(\cdot,v)_{|U}$ is constant, with value $w$. We now aim to show that for almost every $y\in V$, one has $\psi(y,\gamma_1 v)=\gamma_2w$. By definition of $\psi$, the $(\calg_{|U},\rho_1)$-stabilizer of $v$ is stably equal to the $(\calg_{|U},\rho_2)$-stabilizer of $w$. Conjugating by the bisection, it follows that the $(\calg_{|V},\rho_1)$-stabilizer of $\gamma_1 v$ is stably equal to the $(\calg_{|V},\rho_2)$-stabilizer of $\gamma_2 w$, which is exactly what we wanted to show.  
\end{proof}

\section{Applications}

\subsection{Lattice embeddings and automorphisms of the Cayley graph}

A first consequence of our work is that handlebody groups do not admit any interesting lattice embeddings in locally compact second countable groups.

\begin{theo}\label{theo:lattice}
Let $V$ be a handlebody of genus at least $3$. Let $G$ be a locally compact second countable group, equipped with its (left or right) Haar measure. Let $\Gamma$ be a finite index subgroup of $\Mod^\pm(V)$, and let $\sigma:\Gamma\to G$ be an injective homomorphism whose image is a lattice. 

Then there exists a homomorphism $\theta:G\to\Mod^{\pm}(V)$ with compact kernel such that for every $f\in\Gamma$, one has $\theta\circ\sigma(f)=f$.
\end{theo}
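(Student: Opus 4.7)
The plan is to follow the classical Furman--Kida strategy for converting measure equivalence rigidity into lattice-embedding rigidity, with Theorem~\ref{theo:main-2} as the main input; see \cite[Section~3]{Fur} or \cite[Section~7]{Kid} for detailed executions in related settings. Up to replacing $\Gamma$ by a further finite-index subgroup (which is still a lattice in $G$ through $\sigma$ and only strengthens the conclusion), I may assume $\Gamma\subseteq\Mod^0(V)$.

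First I would form the self measure equivalence coupling of $\Gamma$ by taking $\Omega=G$ with Haar measure and the $\Gamma\times\Gamma$-action $(\gamma_1,\gamma_2)\cdot g=\sigma(\gamma_1)\,g\,\sigma(\gamma_2)^{-1}$, whose factor actions are free with finite-measure fundamental domains. Restricting to a common Borel fundamental domain yields a measured groupoid carrying two strict action-type cocycles into $\Mod^0(V)$. Applying Theorem~\ref{theo:main-2} and unpacking the cohomology equation, together with the identification $\mathrm{Comm}(\Mod(V))=\Mod(V)$ recorded in Remark~\ref{rk:commensurator}, produces an essentially unique measurable map $\Phi:\Omega\to\Mod(V)$ satisfying
\[
\Phi\bigl(\sigma(\gamma_1)\,g\,\sigma(\gamma_2)^{-1}\bigr)=\gamma_1\,\Phi(g)\,\gamma_2^{-1}
\]
for almost every $g\in\Omega$; essential uniqueness is underpinned by the ICC property of $\Mod(V)$ (Lemma~\ref{lemma:icc}).

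Next I would construct $\theta$ by transferring the $G$-homogeneity of $\Omega$. The right-translation action of $G$ on $\Omega$ preserves Haar measure and commutes with the left $\sigma(\Gamma)$-action. A Fubini-type argument combined with the essential uniqueness of $\Phi$ produces, for every $h\in G$, a unique $\theta(h)\in\Mod(V)$ such that $\Phi(gh)=\Phi(g)\,\theta(h)$ for almost every $g$; this formula is readily checked to be consistent with both the left and the right $\Gamma$-equivariance of $\Phi$. The assignment $h\mapsto\theta(h)$ is then a measurable homomorphism $G\to\Mod(V)$ by a direct cocycle computation, and a classical measurable-to-continuous promotion (using local compactness of $G$ and countability of $\Mod(V)$) turns $\theta$ into a continuous homomorphism. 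Substituting $h=\sigma(f)$ for $f\in\Gamma$ in the defining relation and invoking the right $\Gamma$-equivariance of $\Phi$ gives $\theta\circ\sigma=\mathrm{id}_\Gamma$.

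The main obstacle I expect is verifying that $\ker\theta$ is compact. Let $D\subseteq\Omega$ be a Borel fundamental domain of finite Haar measure for the left $\sigma(\Gamma)$-action; since $\Mod(V)$ is countable, $\Phi|_D$ has essentially countable image, and right translation by any $h\in\ker\theta$ preserves each fibre $\Phi^{-1}(m)$ up to measure zero. In particular $\ker\theta$ preserves and acts essentially transitively on a Borel subset of $\Omega$ of finite positive Haar measure. In a locally compact second countable group this forces $\ker\theta$ to be relatively compact, and being closed as the kernel of a continuous homomorphism it is then compact. This is the step where the delicate measure-theoretic estimates of \cite[Section~5]{Fur} and \cite[Section~7]{Kid} have to be adapted carefully to the present setting.
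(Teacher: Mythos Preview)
Your proposal is correct and follows essentially the same approach as the paper: the paper simply invokes \cite[Theorem~4.7]{GH} as a black box, noting that Theorem~\ref{theo:main-2} together with the ICC property (Lemma~\ref{lemma:icc}) are exactly its hypotheses, and the footnote to that proof spells out precisely the Furman--Kida self-coupling strategy you have outlined. One small imprecision: in your compactness step, rather than arguing via ``essential transitivity'' it is cleaner to observe that $\ker\theta$ is open (since $\theta(G)\supseteq\Gamma$ has countable, in fact finite, index in $\Mod(V)$), meets $\sigma(\Gamma)$ trivially because $\theta\circ\sigma=\mathrm{id}$, and therefore injects into the finite-volume quotient $\sigma(\Gamma)\backslash G$, forcing it to have finite Haar measure and hence be compact.
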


\begin{proof}
Theorem~\ref{theo:main-2} precisely says that $\Mod^{\pm}(V)$ is rigid with respect to action-type cocycles in the sense of \cite[Definition~4.1]{GH}. As $\Mod^{\pm}(V)$ is ICC (Lemma~\ref{lemma:icc}), the theorem thus follows from \cite[Theorem~4.7]{GH}.\footnote{Theorem~4.7 from \cite{GH} records works of Furman \cite{Fur} and Kida \cite[Theorem~8.1]{Kid}. The idea behind its proof is that the lattice embedding $\sigma$ determines a self measure equivalence coupling of $\Gamma$ (acting on $G$ equipped with its Haar measure), and the rigidity statement provided by Theorem~\ref{theo:main-2} from the present paper ensures that the self coupling of $\Gamma$ on $G$ factors through the obvious coupling on $\Mod^{\pm}(V)$ where $\Gamma$ acts by left/right multiplication. This yields a Borel map $G\to\Mod^{\pm}(V)$, and some extra work is needed to upgrade it to a continuous homomorphism.}
\end{proof}

A theorem of Suzuki ensures that $\Mod^{\pm}(V)$ is finitely generated \cite{Suz} (it is in fact finitely presented by work of Wajnryb \cite{Waj}). Given a finitely generated group $G$ and a finite generating set $S$ of $G$, the \emph{Cayley graph} $\Cay(G,S)$ is defined as the simple graph whose vertices are the elements of $G$, with an edge between distinct elements $g,h$ if $g^{-1}h\in S\cup S^{-1}$.

\begin{theo}
Let $V$ be a handlebody of genus at least $3$. 
\begin{enumerate}
\item For every finite generating set $S$ of $\Mod^{\pm}(V)$, every automorphism of $\Cay(\Mod^{\pm}(V),S)$ is at bounded distance from the left multiplication by an element of $\Mod^{\pm}(V)$.
\item For every torsion-free finite-index subgroup $\Gamma\subseteq\Mod^{\pm}(V)$ and every finite generating set $S'$ of $\Gamma$, the automorphism group of $\Cay(\Gamma,S')$ is countable (in fact it embeds as a subgroup of $\Mod^{\pm}(V)$ containing $\Gamma$). 
\end{enumerate}
\end{theo}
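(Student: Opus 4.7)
The plan is to deduce both parts from Corollary~\ref{corintro:lattice} (stated as Theorem~\ref{theo:lattice}). Let $G$ be $\Mod(V)$ in part~(1) and $\Gamma$ in part~(2), with finite generating set $S$ (respectively $S'$), and set $H=\Aut(\Cay(G,S))$ equipped with the topology of pointwise convergence. Since the Cayley graph is connected and locally finite, $H$ is a locally compact second countable totally disconnected group in which vertex stabilizers are compact open subgroups. The natural homomorphism $\sigma:G\to H$ defined by the canonical multiplication action realizes $\sigma(G)$ as a cocompact lattice: $\sigma(G)$ is discrete because the pointwise stabilizer of the identity vertex is an open neighborhood of $1\in H$ meeting $\sigma(G)$ trivially, and $\sigma(G)\backslash H$ is homeomorphic to this stabilizer, which is compact. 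Applying Theorem~\ref{theo:lattice} to $\sigma$ (and, for part~(2), to the finite-index subgroup $\Gamma\subseteq\Mod(V)$), we obtain a continuous homomorphism $\theta:H\to\Mod(V)$ with compact kernel $K$ satisfying $\theta\circ\sigma=\mathrm{id}_G$.

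The engine of the argument is the following orbit computation: since $K$ is normal in $H$ and $\sigma(G)$ is transitive on vertices, for every $g\in G$ one has $K\cdot g = g\cdot F$, where $F:=K\cdot 1$ is the finite $K$-orbit of the identity vertex. Indeed, normality gives $k':=\sigma(g)^{-1}k\sigma(g)\in K$ for $k\in K$, and then $k(g)=\sigma(g)(k'(1))\in \sigma(g)(F)=gF$. This orbit formula directly yields part~(1): writing any $\alpha\in H$ as $\alpha=\sigma(\theta(\alpha))\cdot k_\alpha$ with $k_\alpha\in K$, we have $\alpha(g)=\sigma(\theta(\alpha))(k_\alpha(g))$ with $k_\alpha(g)\in gF$, so the word-metric distance between $\alpha$ and the canonical multiplication $\sigma(\theta(\alpha))$ is bounded uniformly in $g$ by $\max_{f\in F}|f|_S$.

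For part~(2) it suffices to show $K=\{1\}$, since then $\theta$ is an injective continuous homomorphism into the countable discrete group $\Mod(V)$, with image automatically containing $\theta(\sigma(\Gamma))=\Gamma$. Consider $N:=\{\gamma\in\Gamma : \gamma F=F\}$, the setwise left-stabilizer of $F$; this is a subgroup of $\Gamma$ that embeds into $\mathrm{Sym}(F)$ via its action on $F$ (injectivity follows from $1\in F$ and freeness of the $\Gamma$-action on itself), hence $N$ is finite. The key claim is that $F\subseteq N$: for $f\in F$, the orbit $K\cdot f$ equals $F$ because $f$ lies in the $K$-orbit of $1$, and by the orbit formula it also equals $fF$, so $fF=F$ and $f\in N$. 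The torsion-freeness of $\Gamma$ now forces $N=\{1\}$, hence $F=\{1\}$. This means $K$ fixes the identity vertex, and by normality combined with transitivity of $\sigma(\Gamma)$ it fixes every vertex, so $K=\{1\}$.

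The main subtlety is the interplay between $K$-orbits and the underlying group structure of $\Gamma$: the observation that the orbit $F$ itself is contained in a finite subgroup of $\Gamma$ is the heart of the argument, and torsion-freeness intervenes at exactly this step. This matches the necessity of the torsion-free hypothesis pointed out in the paper via the de la Salle--Tessera example.
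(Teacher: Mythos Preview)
Your proof is correct and follows the same route the paper indicates: realize $G$ as a cocompact lattice in $H=\Aut(\Cay(G,S))$ and apply Theorem~\ref{theo:lattice}. The paper's own proof is a one-line citation of \cite[Corollary~4.8]{GH} (together with the ICC property, which is already baked into Theorem~\ref{theo:lattice} via \cite[Theorem~4.7]{GH}), whereas you have unpacked that black box into a self-contained argument. In particular, your orbit formula $K\cdot g=gF$ and the observation that $F$ lands inside a finite subgroup of $\Gamma$---so torsion-freeness forces $F=\{1\}$---is precisely the mechanism one expects to find inside the cited result, and it cleanly explains why the torsion-free hypothesis is exactly what is needed in part~(2).
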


\begin{proof}
Using the fact that $\Mod^\pm(V)$ is ICC, this follows from Theorem~\ref{theo:main-2} and \cite[Corollary~4.8]{GH} (the idea behind the proof is to view $\Mod^\pm(V)$ as a cocompact lattice in the automorphism group of its Cayley graph and apply the previous theorem).
\end{proof}

As mentioned in the introduction, torsion-freeness of $\Gamma$ is crucial in the second conclusion in view of \cite[Lemma~6.1]{dlST}.

\subsection{Orbit equivalence rigidity and von Neumann algebras}\label{sec:oe}

Seminal work of Furman \cite{Fur3} has shown that measure equivalence rigidity is intimately related to orbit equivalence rigidity of ergodic group actions. In fact two countable groups are measure equivalent if and only if they admit stably orbit equivalent free measure-preserving ergodic actions by Borel automorphisms on standard probability spaces, see \cite[Proposition~6.2]{Gab-l2}. 

\paragraph*{Orbit equivalence rigidity.}

Let $\Gamma_1$ and $\Gamma_2$ be two countable groups, and for every $i\in\{1,2\}$, let $(X_i,\mu_i)$ be a standard probability space equipped with a free ergodic measure-preserving action of $\Gamma_i$. 

The actions $\Gamma_1\actson X_1$ and $\Gamma_2\actson X_2$ are \emph{virtually conjugate} (as in \cite[Definition~1.3]{Kid-oe}) if there exist finite normal subgroups $F_i\unlhd \Gamma_i$, finite-index subgroups $Q_i\subseteq \Gamma_i/F_i$, and free ergodic measure-preserving actions $Q_i\actson Y_i$ on standard probability spaces, so that $Q_1\actson Y_1$ and $Q_2\actson Y_2$ are conjugate, and for every $i\in\{1,2\}$, the action of $\Gamma_i/F_i$ on $X_i/F_i$ is induced from the $Q_i$-action on $Y_i$. This implies in particular that the groups $\Gamma_1$ and $\Gamma_2$ are virtually isomorphic (i.e.\ commensurable up to finite kernels). 

The following is a weaker notion. The actions $\Gamma_1\actson X_1$ and $\Gamma_2\actson X_2$ are \emph{stably orbit equivalent} if there exist positive measure Borel subsets $A_1\subseteq X_1$ and $A_2\subseteq X_2$ and a measure-scaling isomorphism $\theta:A_1\to A_2$\footnote{in other words $\theta$ induces a measure space isomorphism between the probability spaces $\frac{1}{\mu_1(A_1)}A_1$ and $\frac{1}{\mu_2(A_2)}A_2$} such that for almost every $x\in A_1$, one has $$\theta((\Gamma_1\cdot x)\cap A_1)=(\Gamma_2\cdot \theta(x))\cap A_2.$$ 

A free ergodic measure-preserving action of $\Gamma$ on a standard probability space $X$ is \emph{OE-superrigid} if for every countable group $\Gamma'$, and every free ergodic measure-preserving action of $\Gamma'$ on a standard probability space $X'$, if the $\Gamma$-action on $X$ is stably orbit equivalent to the $\Gamma'$-action on $X'$, then the two actions are virtually conjugate (in particular $\Gamma$ and $\Gamma'$ are virtually isomorphic).

The following theorem follows from our work in the exact same way as for mapping class groups of surfaces \cite{Kid-oe} (see also \cite[Lemma~4.18]{Fur-survey}).

\begin{theo}\label{theo:oe}
Let $V$ be a handlebody of genus at least $3$. Then every free ergodic measure-preserving action of $\Mod^\pm(V)$ on a standard probability space is OE-superrigid.
\end{theo}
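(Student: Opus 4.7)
The plan is to deduce Theorem~\ref{theo:oe} from the cocycle rigidity statement Theorem~\ref{theo:main-2} via Furman's classical bridge between cocycle rigidity and orbit equivalence rigidity \cite{Fur3}. The same reduction was used by Kida to derive OE-superrigidity of surface mapping class groups from their ME-superrigidity \cite{Kid-oe}, and the abstract mechanism is conveniently recorded in \cite[Lemma~4.18]{Fur-survey}; my task is essentially to verify that its hypotheses are met in the handlebody setting.

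Concretely, I would start with a stable orbit equivalence $\theta:A_1\to A_2$ between $\Mod(V)\actson X$ and $\Gamma'\actson X'$, and transport both orbit equivalence relations to $A_1$ to obtain a single discrete measured groupoid $\calg$ over $A_1$ equipped with two strict cocycles $\rho_1:\calg\to\Mod(V)$ and $\rho_2:\calg\to\Gamma'$ recording the two actions. Freeness of both actions forces these cocycles to have trivial kernel, and because the actions are probability measure-preserving both cocycles are action-type (as in \cite[Proposition~2.26]{Kid-survey}, and using that the action-type property persists under restriction to positive measure Borel subsets). Applying Theorem~\ref{theo:main} (which follows from Theorem~\ref{theo:main-2} via \cite[Theorem~4.5]{GH}), the group $\Gamma'$ is virtually isomorphic to $\Mod(V)$, so after passing to finite-index subgroups and quotienting by finite normal kernels on both sides, up to countable Borel partition of the base space one may arrange that both cocycles take values in the rotationless subgroup $\Mod^0(V)$. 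Feeding this data into Theorem~\ref{theo:main-2} then produces a Borel map $\phi:A_1\to\Mod(V)$ cohomologizing $\rho_1$ to $\rho_2$.

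The final step is to convert this measurable cohomology between cocycles into an honest virtual conjugacy of actions. The data $(\phi,\Gamma_0\simeq\Mod^0(V))$ gives, on a positive measure subset, a $\Mod^0(V)$-equivariant measure space isomorphism between the restricted actions, which induces up to the full groups to yield a virtual conjugacy in the sense of Section~\ref{sec:oe}. The ICC property of $\Mod(V)$ (Lemma~\ref{lemma:icc}), together with the triviality of $\mathrm{Out}(\Mod(V))$ proved by Korkmaz--Schleimer \cite{KS}, enters crucially to rule out extra twists by inner or outer automorphisms and to ensure essential uniqueness of the conjugating map. The main obstacle is the bookkeeping in the reduction step, namely the careful passage to $\Mod^0(V)$-valued action-type cocycles on both sides while keeping track of the various finite-index and finite-kernel ambiguities; beyond this, the argument is a direct application of the Furman--Kida template, which is precisely why the paper signals the result with a short reference to \cite{Kid-oe} and \cite[Lemma~4.18]{Fur-survey}.
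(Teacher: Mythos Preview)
Your proposal is correct and follows the same Furman--Kida template that the paper invokes by reference to \cite{Kid-oe} and \cite[Lemma~4.18]{Fur-survey}. Two minor remarks: invoking Theorem~\ref{theo:main} first and then Theorem~\ref{theo:main-2} again is redundant (the latter already underlies the former, so the cleaner route untwists the self-coupling directly), and the triviality of $\mathrm{Out}(\Mod(V))$ is not needed---the ICC property from Lemma~\ref{lemma:icc} alone suffices to pin down the virtual conjugacy in Furman's argument.
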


\paragraph*{Rigidity of von Neumann algebras.}

Let $\Gamma$ be a countable group, and let $X$ be a standard probability space equipped with a standard ergodic action of $\Gamma$. Associated to the $\Gamma$-action on $X$ is a von Neumann algebra $L^\infty(X)\rtimes\Gamma$, obtained from the Murray--von Neumann construction \cite{MvN}.

We refer the reader to the work of Ozawa and Popa  \cite[Definition~3.1]{OP} for the notion of a \emph{weakly compact} group action. Let us only mention here that these include \emph{profinite} actions, i.e.\ those obtained as inverse limits of actions on finite probability spaces (see \cite[Proposition~3.2]{OP}). For example, this applies to the action of any residually finite countable group on its profinite completion, equipped with the Haar measure. As a subgroup of $\Mod(\partial V)$, the handlebody group $\Mod(V)$ is residually finite by a theorem of Grossman \cite{Gro}. 

A free ergodic measure-preserving action of a countable group $\Gamma$ on a standard probability space $X$ is \emph{$W^\ast_{wc}$-superrigid} if for every countable group $\Gamma'$, and every weakly compact free ergodic measure-preserving action of $\Gamma'$ on a standard probability space $X'$, if the von Neumann algebras $L^\infty(X)\rtimes \Gamma$ and $L^\infty(Y)\rtimes \Gamma'$ are isomorphic, then the $\Gamma$-action on $X$ is virtually conjugate to the $\Gamma'$-action on $X'$. 

\begin{theo}
Let $V$ be a handlebody of genus at least $3$. Then every free ergodic measure-preserving action of $\Mod^\pm(V)$ on a standard probability space is $W^*_{wc}$-superrigid. 
\end{theo}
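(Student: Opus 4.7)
The plan is to reduce the theorem to two inputs: a Cartan subalgebra uniqueness statement and the orbit equivalence superrigidity of Theorem~\ref{theo:oe}. Given a $\ast$-isomorphism $\pi\colon L^\infty(X)\rtimes\Mod(V)\to L^\infty(Y)\rtimes\Gamma$, I would regard $L^\infty(Y)$ and $\pi(L^\infty(X))$ as two Cartan subalgebras of the common $\mathrm{II}_1$ factor $N=L^\infty(Y)\rtimes\Gamma$: the former arises from the weakly compact free ergodic p.m.p.\ action $\Gamma\actson Y$, while the latter arises, via $\pi$, from the free ergodic p.m.p.\ action of $\Mod(V)$ on $X$.

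The central step will be to show that these two Cartan subalgebras are unitarily conjugate inside $N$. Here I would appeal to the Cartan subalgebra uniqueness theorem of Boutonnet, Ioana and Peterson \cite{BIP}: proper proximality of $\Mod(V)$ is supplied by \cite{HHL} (as noted in the introduction), and in this setting their result guarantees that inside a crossed product $\mathrm{II}_1$ factor associated to a free ergodic p.m.p.\ action of a properly proximal group, any further Cartan subalgebra that comes from a weakly compact free ergodic p.m.p.\ action is unitarily conjugate to the canonical one. Applying this to $N$ would yield a unitary $u\in N$ satisfying $u\pi(L^\infty(X))u^\ast=L^\infty(Y)$.

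Once the two Cartan subalgebras have been conjugated, Singer's classical theorem identifies the associated orbit equivalence relations, and hence the actions $\Mod(V)\actson X$ and $\Gamma\actson Y$ become orbit equivalent, in particular stably orbit equivalent. The conclusion then follows directly from Theorem~\ref{theo:oe}, which upgrades any stable orbit equivalence of a free ergodic p.m.p.\ action of $\Mod(V)$ with another such action to a virtual conjugacy.

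The main obstacle I anticipate is bibliographic rather than logical: locating in \cite{BIP} (or extracting from the proper proximality framework developed there) the precise Cartan uniqueness statement that applies under exactly our hypotheses—a properly proximal group acting freely and ergodically on one side, together with a weakly compact free ergodic p.m.p.\ action on the other—so that the conjugation step can be invoked without having to impose any auxiliary technical assumption on the $\Mod(V)$-action. Everything else in the argument is a formal combination of Theorem~\ref{theo:oe} with a black-box Cartan uniqueness theorem.
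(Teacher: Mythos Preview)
Your proposal is correct and follows essentially the same route as the paper: proper proximality of $\Mod(V)$ from \cite{HHL}, the Cartan uniqueness theorem of Boutonnet--Ioana--Peterson \cite{BIP} (the paper cites \cite[Theorem~1.4]{BIP} for the precise statement you were looking for), Singer's theorem, and then Theorem~\ref{theo:oe}. The bibliographic obstacle you anticipated is indeed handled by \cite[Theorem~1.4]{BIP}, which applies under exactly these hypotheses.
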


\begin{proof}
Let $X$ be a standard probability space equipped with a free ergodic measure-preserving action of $\Mod^\pm(V)$, and let $X'$ be a standard probability space equipped with a weakly compact free ergodic measure-preserving action of a countable group $\Gamma'$. Assume that there exists an isomorphism $\theta:L^\infty(X)\rtimes \Mod^\pm(V)\to L^\infty(X')\rtimes\Gamma'$. By \cite[Theorem~7]{HHL}, the group $\Mod^\pm(V)$ is properly proximal in the sense of Boutonnet, Ioana and Peterson \cite{BIP}. It thus follows from \cite[Theorem~1.4]{BIP} that up to unitary conjugacy, the isomorphism $\theta$ sends $L^\infty(X)$ to $L^\infty(X')$. This implies that the actions $\Gamma\actson X$ and $\Gamma'\actson X'$ are orbit equivalent (see \cite{Sin}), so the conclusion follows from the orbit equivalence rigidity statement provided by Theorem~\ref{theo:oe}. 
\end{proof}

\begin{rk}
Beyond the weakly compact case, the only kwown $W^*$-superrigidity result for handlebody groups concerns their Bernoulli actions, that is, actions of the form $\Mod^\pm(V)\actson X_0^{\Mod^\pm(V)}$, where $X_0$ is a standard probability space not reduced to a point, and the action is by shift. More precisely, when $V$ has genus at least 3, if a Bernoulli action $\Mod^\pm(V)\actson X$ and a free, ergodic, probability measure-preserving action of a countable group have isomorphic von Neumann algebras, then the actions are conjugate. This follows from \cite[Theorem~A.2]{HH3}, based on work of Ioana, Popa and Vaes \cite[Theorem~10.1]{IPV}, applied by letting $\Gamma_0$ be the cyclic subgroup generated by a Dehn twist about a nonseparating meridian $\alpha$, letting $\Gamma_1$ be the stabilizer of the isotopy class of $\alpha$, and $\Gamma=\Mod^\pm(V)$. Indeed, to check that \cite[Theorem~A.2]{HH3} applies, we only need to find an element $g\in\Mod^\pm(V)$ such that $g\Gamma_1g^{-1}\cap\Gamma_1$ is infinite, and $\langle\Gamma_1,g\rangle$ generates $\Mod^\pm(V)$. For this, let $\beta,\gamma$ be nonseparating meridians such that $\alpha,\beta,\gamma$ are pairwise disjoint, pairwise non-isotopic, and have connected complement. Let $g\in\Mod^\pm(V)$ be an element sending $\alpha$ to $\beta$ and commuting with the twist $T_\gamma$. Then $g\Gamma_1 g^{-1}\cap\Gamma_1$ is infinite because it contains $T_\gamma$. And $\Mod^\pm(V)$ is generated by $\Gamma_1$ and $g$ because the simplicial graph with vertices the isotopy classes of nonseparating meridians, and edges the nonseparating pairs, is connected (as easily follows from the connectivity of the disk graph) with quotient a single edge.
\end{rk}

\footnotesize

\bibliographystyle{alpha}
\bibliography{ME-Handlebody-bib}

\begin{flushleft}
Sebastian Hensel\\
Mathematisches Institut der Universität München\\
D-80333 München\\
\emph{e-mail: }\texttt{hensel@math.lmu.de}\\
~
\end{flushleft}

\begin{flushleft}
Camille Horbez\\ 
Universit\'e Paris-Saclay, CNRS,  Laboratoire de math\'ematiques d'Orsay, 91405, Orsay, France \\
\emph{e-mail:}\texttt{camille.horbez@universite-paris-saclay.fr}\\[8mm]
\end{flushleft}

\end{document}